\documentclass[reqno]{amsart}
\usepackage[utf8]{inputenc}
\usepackage{subcaption}
\usepackage{amsmath}
\usepackage{amssymb}
\usepackage{amsfonts}
\usepackage{amsthm}
\usepackage{mathrsfs}
\usepackage[all]{xy}
\usepackage{graphicx}
\usepackage{color}
\usepackage{cite} 
\usepackage{url}
\usepackage{indent first}
\usepackage[labelfont=bf,labelsep=period,justification=raggedright]{caption}
\usepackage[english]{babel}
\usepackage[utf8]{inputenc}
\usepackage{hyperref}
\usepackage[colorinlistoftodos]{todonotes}
\usepackage{tkz-fct}
\usepackage{tikz}
\usetikzlibrary{calc}
\usepackage[enableskew]{youngtab}
\usepackage{pstricks}
\usepackage{multicol}
\usepackage{graphicx} 
\usepackage{fancyhdr}
\theoremstyle{plain}
\newtheorem{thm}{Theorem}
\newtheorem{lemma}[thm]{Lemma}
\newtheorem{cor}[thm]{Corollary}

\newtheorem{prop}[thm]{Proposition}

\theoremstyle{definition}
\newtheorem{defn}[thm]{Definition}

\theoremstyle{remark}
\newtheorem{rem}[thm]{Remark}
\newtheorem*{ex}{Example}

\numberwithin{equation}{section}
\numberwithin{thm}{section}

\newcommand{\CC}{\mathbb{C}}

\newcommand{\NN}{\mathbb{N}}

\newcommand{\RR}{\mathbb{R}}

\newcommand{\mcB}{\mathcal{B}}

\newcommand{\mcL}{\mathcal{L}}
\newcommand{\mcM}{\mathcal{M}}

\newcommand{\mcP}{\mathcal{P}}

\newcommand{\mcS}{\mathcal{S}}

\newcommand{\FP}{\text{FP}_\infty}

\title{A Recurrence Based Summation Method for Ultra-Rapid Divergent Series and Renormalon Type Expansions}
\author{Ishan Joshi}

\begin{document}

\begin{abstract}
Classical summation methods are often organized around particular growth regimes. Standard Borel summation is suited to Gevrey-1 series, while higher-order Gevrey behavior is commonly handled by changing the kernel, for instance through Mittag-Leffler summation. In this paper, we introduce a recurrence-based summation method, called C-summation, whose primary input is a first-order inhomogeneous recurrence.

The recurrence does not determine a unique solution, since different solutions may differ by a homogeneous term. We remove this ambiguity by passing to a normalized tail, where the homogeneous ambiguity becomes an additive constant, and then extracting the finite part of that tail. The resulting finite-part selector is defined through a Bromwich transform on the normalized tail differences. 

We prove that, under a precise $\mathcal M$-admissibility hypothesis, the resulting value is independent of the chosen solution of the recurrence and of the chosen admissible recurrence presentation of the same formal series. We also show that C-summation is regular and homogeneous, and that it is stable under an explicit shift-compatibility condition on the normalized tail. In a certain Borel-summable class, we prove agreement with Borel-Laplace summation.

\end{abstract}

\maketitle

\section{Introduction} \label{introduction}

Classical approaches to divergent series, most notably Borel-Laplace  \cite{costin2008asymptotics} summation and Écalle’s resurgent analysis \cite{ecalle1981fonctions}, are built on continuous integral transforms. While powerful, these methods are often tied to particular growth scales. For instance, standard Borel summation treats Gevrey-1 growth ($\sum n! z^n$) but fails for superfactorial divergence. To handle higher-order Gevrey-$k$ growth, one must often abandon the Borel kernel for a Mittag-Leffler kernel $E_\alpha(z)$, which requires tuning the index $\alpha$ to match the specific order of divergence. This dependence on the chosen kernel becomes especially visible for ultra-rapid divergence, where one often has to use additional tools such as acceleration operators. This suggests asking whether one can formulate a summation procedure from a different starting point, based on discrete structure rather than on a chosen transform kernel.

In this paper, we introduce $C$-summation, a summation method based on first-order inhomogeneous recurrences. We begin with a recurrence $ a(n)I_n+b(n)I_{n+1}=\mu_n$. Unfolding this recurrence produces an associated formal series. The recurrence does not determine a unique solution, as solutions can differ by homogeneous terms. To remove this ambiguity, we pass to a normalized tail. At that level, the ambiguity becomes additive. We then define the sum by extracting the finite part of an admissible asymptotic expansion of that tail.

This leads to a summation method with the following properties. We prove that $C$-summation is independent of the chosen solution of the recurrence and independent of the particular recurrence presentation of the same formal series. We also prove that it is regular and homogeneous. Stability under finite truncation is treated more carefully: it holds when the constant term extracted from the normalized tail does not change after shifting the tail forward by one step, but this is not automatic from existence alone. Thus the theory distinguishes between existence of the $C$-sum and the stronger requirement that deleting or reindexing finitely many initial terms behaves in the expected way.

A further point is that $C$-summation can be compared with Borel Summation. We prove that, under a specific collection of hypotheses on the Borel transform and the associated remainder terms, the $C$-sum agrees with the Borel-Laplace sum. This does not amount to a general equivalence with Borel summation, but it shows that the recurrence construction is compatible with it in the class covered by the theorem.

A point worth emphasizing is that the same construction, without modification, applies both to Gevrey-$k$ series for arbitrary $k$ and to the ultra-rapid series $\sum (-1)^n e^{n^2/2}$. In the latter case, the construction yields the value $1/2$.

Finally, we apply this framework to the problem of renormalons (discussed in \cite{beneke1999renormalons}), which are series which appear frequently in Quantum Chromodynamics \cite{t1979can}, and whose Borel transforms possess singularities on the positive real axis (e.g., $\sum n! a^n$). For the simple example treated here, the construction yields the principal value associated with the Borel transform without contour deformation.

\section{Preliminaries}

A summation method assigns values to a class of formal or divergent series. In discussing such methods, several structural properties are standard. We record the ones relevant for the present paper. Let $A: X \to Y$ be a summation method. 
\begin{defn} [Homogeneity] \label{homogeneity}
     $A$ is homogeneous if $A(k\sum a_n)= k\cdot A(\sum a_n)$
\end{defn} 

\begin{defn} [Linearity]
    $A$ is linear if $A(\sum a_n) + A(\sum b_n) = A(\sum (a_n +b_n))$
\end{defn}

\begin{defn} [Regularity] \label{regularity}
    $A$ is said to be regular if whenever $\sum a_n$ is convergent, $A(\sum a_n) = \sum a_n $. In other words, $A$ sums every convergent series to its ordinary sum. 
\end{defn}

\begin{defn} [Stability] \label{stability}
    If $A(\sum_{n=0}^\infty a_n) = S$, then $a_0 + A(\sum_{n=1}^\infty a_n) = S$.
\end{defn}

The main comparison in this paper is with Borel summation, so we recall the form that will be used later.
\begin{defn}[Borel Summation]
    Let $\sum_{n=0}^\infty a_n z^n$ be a formal power series, and let $\widehat{A}(t) = \sum_{n=0}^\infty \frac{a_n}{n!} t^n$ be its Borel transform. If $\widehat{A}$ extends analytically to a neighborhood of $[0,\infty)$, then the Borel sum of the series defined by $$\int_0^\infty e^{-t} \widehat{A}(zt) \ dt$$ when this integral converges. 
\end{defn}
This is the classical analytic procedure against which the recurrence-based method will be compared in the later sections.

\section{Main Results}

We now state the main theorem. It gives the basic criterion under which a recurrence presentation produces a well-defined value and records the summation-theoretic properties proved later in the paper.

\begin{thm}
Let $\mathcal{S}=\sum_{k=0}^\infty c_k$ be a formal series. Suppose $\mathcal{S}$ admits a first-order recurrence presentation $a(n)I_n+b(n)I_{n+1}=\mu_n$ with associated formal coefficients
$$c_k = (-1)^k \mu_{n_0+k}\prod_{j=0}^{k-1}\frac{b(n_0+j)}{a(n_0+j+1)},$$
and suppose that for some solution $(I_n)$ the normalized tail
$$
R_N = a(n_0)\frac{I_{n_0+N}}{P(n_0+N,n_0)}
\qquad\text{where}\qquad
P(n,n_0)=\prod_{k=n_0}^{n-1}\left(-\frac{a(k)}{b(k)}\right)
$$
is $\mathcal{M}$-admissible. Then the quantity
$$C(\mathcal{S}) := a(n_0)I_{n_0}-\mathrm{FP}_\infty(R)$$ exists and is well defined in the following sense:

\begin{enumerate}
    \item It is independent of the choice of solution $(I_n)$.
    \item It is independent of the chosen admissible recurrence presentation of $\mathcal{S}$.
    \item If $\mathcal{S}$ is convergent and $C$-summable, then $C(\mathcal{S})$ agrees with the ordinary sum.
    \item $C$-summation is homogeneous.
    \item If, in addition, the normalized tail satisfies the shift-compatibility condition $ \mathrm{FP}_\infty(R)=\mathrm{FP}_\infty(R^+)$, then $C$-summation is stable.
\end{enumerate}
\end{thm}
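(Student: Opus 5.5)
The whole argument rests on one telescoping identity. Once it is in place, every item of the theorem reduces to a structural property of $\mathrm{FP}_\infty$ on $\mathcal M$-admissible sequences.

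\textbf{The telescoping identity.} Set $J_n = I_n/P(n,n_0)$. From $P(n+1,n_0)=P(n,n_0)\cdot(-a(n)/b(n))$, dividing the recurrence $a(n)I_n+b(n)I_{n+1}=\mu_n$ by $a(n)P(n,n_0)$ gives
$$J_n - J_{n+1} = \frac{\mu_n}{a(n)P(n,n_0)}.$$
Multiplying by $a(n_0)$ and summing from $n=n_0$ to $n_0+N-1$ should give
$$S_N := \sum_{k=0}^{N-1} c_k = a(n_0)I_{n_0} - R_N .$$
Here I must check that $a(n_0)\mu_{n_0+k}/(a(n_0+k)P(n_0+k,n_0))$ reproduces the stated $c_k$. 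The shift between $a(n_0+j)$ and $a(n_0+j+1)$ in the product should telescope against the prefactor. I expect this indexing bookkeeping to be the only delicate computation here.

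Consequently the definition rewrites as $C(\mathcal S)=\mathrm{FP}_\infty(a(n_0)I_{n_0}-R)$, provided $\mathrm{FP}_\infty$ has the following properties:
\begin{itemize}
\item (a) it is linear on $\mathcal M$-admissible sequences;
\item (b) it returns $c$ on the constant sequence $c$;
\item (c) it returns the ordinary limit on convergent admissible sequences.
\end{itemize}
I would verify (a)--(c) first, directly from the Bromwich-transform definition applied to the tail differences $R_{N+1}-R_N$. Because the definition uses differences, the constant is annihilated there and is recovered by the normalization, which makes (b) essentially built in. Property (c) should follow from a Tauberian-type argument: if the differences are summable, the Bromwich finite part equals the limit.

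\textbf{Items (1)--(4).} With (a)--(c) available, these are short.

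For (1), any two solutions differ by a homogeneous solution $\lambda P(n,n_0)$. This changes $a(n_0)I_{n_0}$ by $a(n_0)\lambda$ and changes every $R_N$ by the same constant $a(n_0)\lambda$, so by (a) and (b) the two shifts cancel in $C(\mathcal S)$.

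For (2), the identity shows that $C(\mathcal S)=\mathrm{FP}_\infty(S_N)$, where $S_N$ is the partial-sum sequence. This depends only on the coefficients $c_k$, not on $a,b,\mu$ or $n_0$. Admissibility of $R$ is equivalent to admissibility of $S$, since the two differ by a constant and a sign.

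For (3), if $\mathcal S$ converges then $S_N\to\sum c_k$, and (c) applies.

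For (4), replacing $\mu_n$ by $k\mu_n$ with $a,b$ fixed scales each $c_k$ and a solution $(I_n)$ by $k$. Linearity then gives $C(k\mathcal S)=kC(\mathcal S)$.

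\textbf{Stability, item (5).} I would present $\sum_{k\ge1}c_k$ by the same recurrence started at $n_0+1$, rescaled so that its normalized tail is $R^+_N=R_{N+1}$ up to the constant factor relating $P(\cdot,n_0+1)$ and $P(\cdot,n_0)$. The identity at $N=1$ gives $c_0=a(n_0)I_{n_0}-R_1$. Hence
$$c_0 + C\Bigl(\sum_{k\ge1}c_k\Bigr) = a(n_0)I_{n_0} - \mathrm{FP}_\infty(R^+),$$
which equals $C(\mathcal S)$ exactly under the shift-compatibility hypothesis.

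\textbf{Main obstacle.} I expect the hardest part to be establishing (a)--(c), and especially (c), from the Bromwich definition. A secondary difficulty is ensuring that the rescaled start at $n_0+1$ is itself an admissible presentation, so that item (2) legitimately applies to the shifted series.
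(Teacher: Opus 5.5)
Your reduction is sound and follows the paper's route. The paper also starts from the partial-sum identity $a(n_0)I_{n_0}=S_N+R_N$. It proves the finite-part properties you call (a)--(c) beforehand, as constant shift, constant scaling and agreement with limits. It then obtains items (1)--(5) by the same constant-shift bookkeeping you use.

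Your repackaging $C(\mathcal S)=\mathrm{FP}_\infty(S)$ is a slightly cleaner way to get (2)--(4). It also shows that admissibility of $R$ depends only on the $c_k$, whereas the paper assumes both presentations are admissible.

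You only ever need (a) for constant shifts and scalar multiples, and those hold with closure. General additivity holds only when $X$, $Y$ and $X+Y$ are all admissible, because shift-nonresonance need not survive sums.

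Your secondary worry in (5) disappears. The shift-compatibility hypothesis presupposes that $R^+$ is admissible. Rescaling $\mu$ by $-b(n_0)/a(n_0+1)$ at start $n_0+1$ gives a presentation whose tail is exactly $R^+$.

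The genuine gap is your plan for (c). Summability of the differences only tells you that the selector $U_c$, with $U_c(N)-U_c(N+1)=R_N-R_{N+1}$, converges to some constant $L'$. Then $\mathrm{FP}_\infty(R)=\lim R_N-L'$, and regularity needs $L'=0$. The paper does not derive this from the Bromwich construction by any Tauberian argument; it says explicitly that this is not automatic.

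Instead, the paper builds \emph{shift-nonresonance} of $U_c$ into the definition of $\mathcal M$-admissibility. Either $U_c(N)\to0$ outright, or $|U_c(N)|\le\varepsilon^{-1}|U_c(N)-U_c(N+1)|=\varepsilon^{-1}|R_N-R_{N+1}|\to0$. Since $R_N-U_c(N)$ is eventually constant, that constant is then $\lim R_N$. So (c) is a two-line consequence of a hypothesis, not a theorem about Bromwich selectors. Without invoking that clause, or supplying an actual Tauberian theorem, your proof of item (3) is unsupported.
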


The rest of the paper is organized as follows. We begin with the continued-fraction viewpoint associated with the recurrence and then introduce the asymptotic framework needed to define admissibility and finite-part extraction. These ingredients are then used to prove the theorem above. After that, we compare the method with Borel-Laplace summation in a concrete class and conclude with examples, including Gevrey-type, ultra-rapid, and renormalon-type series.

\section{Continued Fractions} \label{continued fractions representing certain integrals}

Our discussion begins with the simple two term recurrence $$a(n)I_n + b(n)I_{n+1} = \mu_n$$ where $a(n),b(n): \NN \to \CC$. A natural way to study this recurrence is to pass the sequence $I_n$ to a ratio $r_n = I_{n+1}/I_n$. This turns the recurrence into an iteration by fractional linear transformations. In particular, it produces a generalized continued fraction, so the recurrence may be viewed not only as a linear relation among the $I_n$, but also as a nonlinear dynamic system. Continuing in this fashion, we have that $$\frac{a(n)I_{n} + b(n)I_{n+1}}{a(n+1)I_{n+1} + b(n+1)I_{n+2}} = \frac{\mu_n}{\mu_{n+1}}$$
Then, if we define $r_n = \frac{I_{n+1}}{I_n}$, dividing the numerator and denominator by $I_{n+1}$ results in
$$\frac{\frac{a(n)}{r_n} + b(n)}{a(n+1) + b(n+1)r_{n+1}} = \frac{\mu_n}{\mu_{n+1}}$$
Finally, separating the $r_n$ term, we get that: 
\begin{equation}
    r_n = \frac{a(n)\mu_{n+1}}{a(n+1)\mu_n - b(n)\mu_{n+1} + b(n+1)\mu_n r_{n+1}} \label{eq:1}
\end{equation}
This reformulation is useful for two reasons. First, it shows that continued fractions arise naturally from the recurrence rather than being introduced artificially. Second, it suggests that divergent behavior in the original series may still be organized by a stable recursive law at the level of the ratios. In favorable cases, the continued fraction may converge even when the associated series does not, and this raises the possibility that the recurrence carries meaningful information beyond ordinary convergence of the series itself.

The next theorem makes this connection explicit. By repeatedly unfolding the recurrence, one obtains a formal Eulerian expansion for a normalized initial value. This expansion is the series naturally attached to the recurrence, and it is a scaled version of this series that will later be assigned a value by $C$-summation.

\begin{thm} \label{formal series}
    If $\mu_{n_0} \neq 0$, then $\frac{a(n_0)I_{n_0}}{\mu_{n_0}}$ has the formal series expansion $$1+\sum_{k=1}^\infty (-1)^k\frac{\mu_{{n_0}+k}}{\mu_{n_0}}  \prod_{j=0}^{k-1} \frac{b({n_0}+j)}{a({n_0}+j+1)}$$  
\end{thm}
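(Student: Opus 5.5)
The plan is to unfold the recurrence repeatedly, solving at each step for the current term in terms of the next, and to read off the formal series from the telescoping partial expansions. From $a(n)I_n + b(n)I_{n+1} = \mu_n$ we get, whenever $a(n)\neq 0$,
$$a(n)I_n = \mu_n - \frac{b(n)}{a(n+1)}\, a(n+1)I_{n+1}.$$
The natural quantity to iterate is therefore $J_n := a(n)I_n$, which satisfies the first-order relation $J_n = \mu_n - \beta_n J_{n+1}$ with $\beta_n = b(n)/a(n+1)$.

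Starting at $n=n_0$ and substituting the relation for $J_{n_0+1}$, then for $J_{n_0+2}$, and so on, an induction on $K$ gives the exact finite identity
$$J_{n_0} = \sum_{k=0}^{K-1} (-1)^k \mu_{n_0+k}\prod_{j=0}^{k-1}\beta_{n_0+j} \;+\; (-1)^K \Big(\prod_{j=0}^{K-1}\beta_{n_0+j}\Big) J_{n_0+K}.$$
The inductive step is a one-line substitution: replace $J_{n_0+K}$ by $\mu_{n_0+K}-\beta_{n_0+K}J_{n_0+K+1}$. The base case $K=0$ is trivial, with the empty product equal to $1$. Dividing by $\mu_{n_0}\neq 0$ then produces exactly the partial sums of the stated series, with the $k=0$ term equal to $1$, plus an explicit remainder term.

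The final step is to interpret ``formal series expansion''. Discarding the remainder $(-1)^K\big(\prod_{j<K}\beta_{n_0+j}\big)J_{n_0+K}/\mu_{n_0}$ and letting $K\to\infty$ formally yields the stated series, whose coefficients coincide with the $c_k/\mu_{n_0}$ in the main theorem. I would also note that the remainder equals $(-1)^K\mu_{n_0}^{-1}a(n_0)I_{n_0+K}/P(n_0+K,n_0)$ up to sign bookkeeping. This matches the normalized tail $R_K$, which links the statement to the later finite-part construction.

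The main obstacle is not the algebra but being precise about which claim is being made. No convergence is asserted, so the statement should be read as the identity of partial sums with an exact remainder, valid for every $K$. The nonvanishing hypotheses $a(n_0+j+1)\neq 0$ for all $j\ge 0$ are also needed so that the products make sense. I would state this assumption explicitly, alongside $\mu_{n_0}\neq 0$.
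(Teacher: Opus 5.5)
Your proposal is correct and is essentially the paper's argument: the paper also unfolds $a(n)I_n+b(n)I_{n+1}=\mu_n$ by repeated substitution. The paper iterates the normalized quantity $T'_n=a(n)I_n/\mu_n$, which tacitly needs every $\mu_n\neq 0$; you iterate $J_n=a(n)I_n$, divide once by $\mu_{n_0}$ at the end, and keep an explicit remainder, which is the form the paper itself uses later in its partial-sums proposition.

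One small correction to your side remark: the remainder is exactly $R_K/\mu_{n_0}$, with no extra factor $(-1)^K$, because the signs are already absorbed into $P(n_0+K,n_0)=\prod_{k=n_0}^{n_0+K-1}\bigl(-a(k)/b(k)\bigr)$.
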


\begin{proof}
     Based on the recurrence, we know that $a({n_0})I_{n_0} + b({n_0})I_{{n_0}+1} = \mu_{n_0}$ . Then, $a({n_0})I_{n_0} = \mu_{n_0} - b({n_0})I_{{n_0}+1}$. Thus, $$\frac{a({n_0})I_{n_0}}{\mu_{n_0}} = 1- \frac{b({n_0})}{\mu_{n_0}}I_{{n_0}+1}$$ Now, let $T'_n=\frac{a(n)I_n}{\mu_n}$ Then, applying this to $I_{{n_0}+1}$, we get $I_{{n_0}+1} = \frac{\mu_{{n_0}+1}}{a({n_0}+1)}T'_{{n_0}+1}$. Then, we have that $$T'_{n_0} = 1- \frac{b({n_0})}{\mu_{n_0}}I_{{n_0}+1} = 1- \frac{b({n_0})\mu_{{n_0}+1}}{a({n_0}+1)\mu_{n_0}}T'_{{n_0}+1}$$ Then, iterating this identity, we get the following formal series:
    \begin{align}T'_{n_0} = \frac{a({n_0})I_{n_0}}{\mu_{n_0}} =& \frac{1}{1+\frac{b({n_0})}{a({n_0})}r_{n_0}} = 1+\sum_{k=1}^\infty \prod_{j=0}^{k-1} -\frac{b({n_0}+j)\mu_{{n_0}+j+1}}{a({n_0}+j+1)\mu_{{n_0}+j}}\\ =& \ 1+\sum_{k=1}^\infty (-1)^k\frac{\mu_{{n_0}+k}}{\mu_{n_0}}  \prod_{j=0}^{k-1} \frac{b({n_0}+j)}{a({n_0}+j+1)} \label{eq:2} \end{align} 

    This is a formal expansion obtained through a recursive substitutions. Thus, no convergence claim is made here.

\end{proof}

$\eqref{eq:2}$ is central to the subsequent analysis of divergent series, as sums of this form will appear frequently during our discussion of divergent series.

\section{The Recurrence Ambiguity} \label{the reccurence ambiguity}
The previous section sets up a recurrence of the form $a(n)I_n + b(n)I_{n+1}=\mu_n$. However, there is a problem. There are multiple solutions to this recurrence, all of which differ by a homogeneous term. To see this, suppose $J_n$ is another solution to the recurrence. Let $H_n = J_n - I_n$. We therefore have that $a(n)H_n + b(n)H_{n+1} =0$, meaning that $$H_{n+1} = -\frac{a(n)}{b(n)}H_n = -\frac{a(n)}{b(n)}\cdot -\frac{a(n-1)}{b(n-1)} H_{n-1} = H_{n_0}\prod_{k=n_0}^{n-1}- \frac{a(k)}{b(k)} $$ Then, $$J_n = I_n+(J_{n_0} - I_{n_0})\prod_{k=n_0}^{n-1} -\frac{a(k)}{b(k)}$$ For simplicity, denote $P(n,n_0)=\prod_{k=n_0}^{n-1} -\frac{a(k)}{b(k)}$.  This computation shows that there are infinite solutions to this recurrence, all of which differ by some homogeneous term. From this a natural question arises. How do you choose the "correct" solution of the recurrence? What does it mean for a solution to even be "correct"? In order for the summation method to be well defined, this question must be addressed. 

The ambiguity $J_n=I_n+C\,P(n,n_0)$ shows that the recurrence alone does not determine $I_{n_0}$. 

\section{The Finite Part}\label{finite term}

The previous section shows that the recurrence
$$
a(n)I_n+b(n)I_{n+1}=\mu_n
$$
does not determine a unique solution. Throughout this section, whenever a recurrence presentation is fixed, we assume $a(n)\neq 0,$ and $b(n)\neq 0$ for all $n\ge n_0$. Any two solutions differ by a homogeneous term. Thus, for a summation method coming from the recurrence, there must be a principled way to resolve this ambiguity. Define the normalized tail sequence to be
$$
R_N=a(n_0)\frac{I_{n_0+N}}{P(n_0+N,n_0)}
$$
where
$$
P(n,n_0)=\prod_{k=n_0}^{n-1}\left(-\frac{a(k)}{b(k)}\right).
$$

This normalized tail is the right object to study the ambiguity because the homogeneous term becomes a constant shift at the level of $R_N$. Prior to normalization, different solutions differ by a large oscillatory homogeneous factor. After normalization, they differ by a constant. Thus, it becomes natural to find a procedure to extract the constant contribution from the normalized tail.

In general, however, the sequence $R_N$ need not converge. Even in simple examples, it may contain logarithmic growth, inverse powers of $N$, exponential scales, alternating phases such as $(-1)^N$, or mixtures of several such behaviors. Ordinary limits are therefore too restrictive: the normalized tail may fail to have a limit, while still possessing enough structure to allow an intrinsic finite-part extraction.

The construction in this section has three steps. First, we define a class $\mcM$ of coefficient germs which is large enough to contain the tail differences appearing in the examples, but rigid enough that an eventual sequence has at most one representation in the class. Second, for a coefficient germ $c\in\mcM$, we construct a canonical selector $U_c$ satisfying
$$
U_c(N)-U_c(N+1)=c(N)
$$
for all sufficiently large $N$. Third, we define the finite part of a normalized tail $R_N$ by subtracting this selector from $R_N$. The result is eventually constant, and that eventual constant is the finite part.

We first define the coefficient-side class used in the present paper. The class below is an exact finite-sum class. There are no hidden remainders. This is what makes subtraction closure and the later uniqueness argument rigorous.

\subsection{Coefficient germs}

We begin with the positive log-exp scales.
\begin{defn}[Reduced log-exp monomial scale]
Set
$$
\ell_0(x):=x,\qquad \ell_{j+1}(x):=\log \ell_j(x)
$$
for all sufficiently large $x$. Let $\mcP$ denote the power-log monomial group
consisting of germs
$$
P(x)=\prod_{j=0}^r \ell_j(x)^{\alpha_j},
\qquad
\alpha_j\in\RR,
$$
with coefficient equal to $1$. We define the reduced log-exp monomial scale $\mcL$ recursively. First set $$\mcL_0:=\mcP.$$ Suppose $\mcL_{\le h}:=\bigcup_{q=0}^h\mcL_q$ has been defined. A reduced
exponent of height at most $h$ is a finite sum
$$
\Phi(x)=\sum_{\nu=1}^s c_\nu M_\nu(x),
$$
where $c_\nu\in\RR^\times$, the $M_\nu\in\mcL_{\le h}$ are distinct, each
$M_\nu(x)\to+\infty$, and no $M_\nu$ is one of the iterated logarithms
$\ell_j$ for $j\ge 1$. The empty sum is allowed. Then define $\mcL_{h+1}$ to be
the set of all germs of the form
$$
L(x)=\exp(\Phi(x))P(x),
\qquad
P\in\mcP,
$$
where $\Phi$ is a reduced exponent of height at most $h$. Finally set
$$
\mcL:=\bigcup_{h\ge 0}\mcL_h.
$$

For $L_1,L_2\in\mcL$, write
$$
L_1\prec L_2
\qquad\Longleftrightarrow\qquad
\frac{L_1(x)}{L_2(x)}\to 0
\qquad (x\to+\infty).
$$
\end{defn}
Elements of $\mcL$ are reduced log-exp expressions, viewed as eventual germs on the positive real axis. The reduced expression is part of the data. Thus $\mcL$ consists only of expressions of the prescribed recursive form. Positive constants are excluded by requiring the power-log factor $P$ to have coefficient $1$. Lower-order exponential unit terms are excluded because the exponent
$\Phi$ only contains monomials $M_\nu$ satisfying $M_\nu(x)\to+\infty$. Iterated-logarithm terms are not allowed in $\Phi$, since they are represented instead through the power-log factor $P$.

The next lemma says that this scale is ordered by growth. This is the basic comparison result used throughout the finite-part construction.

\begin{lemma}[Trichotomy for reduced log-exp monomials]
For $L_1,L_2\in\mcL$, exactly one of
$$
L_1\prec L_2,\qquad L_2\prec L_1,\qquad L_1=L_2
$$
holds, where equality means equality as eventual germs.
\end{lemma}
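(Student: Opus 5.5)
The plan is to reduce the trichotomy to a statement about single germs. First, exactly one of the three alternatives can hold at a time: $L_1\prec L_2$ and $L_2\prec L_1$ together would force $L_1/L_2\to 0$ and $L_2/L_1\to 0$, which is absurd; and if $L_1=L_2$ then $L_1/L_2\equiv 1$ eventually. So the content is that at least one holds. Since $\mcL$ is closed under quotients in the sense that $L_1/L_2=\exp(\Phi_1-\Phi_2)P_1/P_2$, it suffices to show the following: for $\Phi=\Phi_1-\Phi_2$ a finite real combination of distinct monomials from $\mcL$ tending to $+\infty$ (none an iterated logarithm), and $P\in\mcP$, the germ $G=\exp(\Phi)P$ either tends to $0$, tends to $+\infty$, or is identically $1$. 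Here I use that after collecting like terms, $\Phi$ is again a reduced-type exponent (possibly empty).

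I would argue by induction on the height $h$. For $h=0$ we have $G=P=\prod_j \ell_j^{\alpha_j}$. If all $\alpha_j=0$ then $G\equiv 1$. Otherwise let $j_0$ be the smallest index with $\alpha_{j_0}\ne 0$. Then $\log G=\sum_j\alpha_j\ell_{j+1}$ is dominated by $\alpha_{j_0}\ell_{j_0+1}$, because $\ell_{j+1}=o(\ell_{j_0+1})$ for $j>j_0$. Hence $\log G\to\pm\infty$ according to the sign of $\alpha_{j_0}$. For the inductive step, suppose trichotomy (in the strong form that distinct elements of $\mcL_{\le h}$ are strictly comparable) holds up to height $h$. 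Then the monomials $M_\nu$ appearing in $\Phi$ can be totally ordered, and there is a unique $\prec$-maximal one, $M_1$, with nonzero coefficient $c_1$. This gives $\Phi=c_1M_1(1+o(1))$ and so $\Phi\to\pm\infty$.

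The key remaining point is to show that $\Phi$ also dominates $\log P$, so that $\log G=\Phi+\log P\to\pm\infty$ whenever $\Phi$ is nonempty. Since $\log P=\sum\alpha_j\ell_{j+1}=O(\log x)$, I need each $M_\nu\to+\infty$ with $M_\nu\notin\{\ell_j\}_{j\ge1}$ to satisfy $\log x=o(M_\nu)$. I expect this to be the main obstacle. A monomial like $\ell_1^{1/2}$ or $\ell_2$ tends to infinity, is not literally excluded as $\ell_j$, and yet is $o(\log x)$. So the exclusion clause in the definition must be read as forbidding all pure power-log monomials built only from $\ell_{j}$, $j\ge1$. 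Alternatively, the comparison must be handled more finely. The refinement I would try first is to split $\Phi$ into its exponential-type part and its power-log part. Any term $c\,\ell_1^{\beta_1}\cdots$ with $\beta_0=0$ is compared directly with $\log P$ using the $h=0$ argument applied to the exponents $\log$ of those monomials. Any monomial with $\beta_0>0$, or carrying a nontrivial exponential factor, dominates $\log x$.

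With these comparisons in hand, the argument splits into two cases. If $\Phi$ reduces to the empty sum after cancellation, we are back in the base case for $P$. Otherwise $\log G$ is asymptotic to its leading term and tends to $\pm\infty$. This gives $G\to\infty$ or $G\to 0$, that is, $L_2\prec L_1$ or $L_1\prec L_2$. Finally, equality of germs in the case $G\equiv1$ follows from cancellation of all data. This also shows that distinct reduced expressions give distinct germs, which is exactly what the induction hypothesis needs at the next height.
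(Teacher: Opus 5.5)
Your reduction to a single germ $G=\exp(\Phi)P$, your base case, and your strong induction hypothesis (that $\mcL_{\le h}$ is totally ordered) all match the paper. The inductive step, however, has a genuine gap. The step you isolate as essential, that $\Phi$ dominates $\log P$, is false for the paper's class. Your proposed refinement relies on a further false claim, namely that every monomial carrying a nontrivial exponential factor dominates $\log x$.

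Take $M=\exp(\ell_2^{1/2})$. Its exponent $\ell_2^{1/2}\in\mcL_0$ tends to $+\infty$ and is not an iterated logarithm, so $M\in\mcL_1$. Moreover $M\to+\infty$ and $M$ is not an $\ell_j$, so $M$ is allowed in a reduced exponent. Yet $M=o(\ell_1)$, because $\log M-\log\ell_1=\ell_2^{1/2}-\ell_2\to-\infty$. Comparing $L_1=\exp(M)\in\mcL_2$ with $L_2=x$ gives $\log(L_1/L_2)=M-\ell_1$. Here $\log P$ dominates $\Phi$, and $M$ carries an exponential factor without dominating $\log x$, so your case split does not decide this comparison.

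Reading the exclusion clause more restrictively does not help. It would prove the lemma only for a smaller class than the paper's $\mcL$, since it removes this $M$ and also, e.g.\ $\exp(\ell_1^2)$. Note also that $\ell_2$ itself is already excluded from reduced exponents; $\ell_1^{1/2}$ is the real instance of what you observed.

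The missing idea is that $\Phi$ and $\log P$ never need to be compared as blocks. With $P_1=\prod_j\ell_j^{\alpha_j}$ and $P_2=\prod_j\ell_j^{\beta_j}$, write $\log(L_1/L_2)=\Phi_1-\Phi_2+\sum_j(\alpha_j-\beta_j)\ell_{j+1}$. Treat the iterated logarithms $\ell_{j+1}$ as further monomials of a single finite real linear combination. They lie in $\mcL_0\subseteq\mcL_{\le h}$, they tend to $+\infty$, and by the exclusion clause they are distinct from the monomials of $\Phi_1-\Phi_2$.

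Your induction hypothesis then applies to the whole collection at once. After combining equal terms, either the combination is empty, so $L_1/L_2=1$ eventually, or it has a unique dominant term $cM_*$ with $c\in\RR^\times$. In the second case $\log(L_1/L_2)=cM_*(1+o(1))\to\pm\infty$ according to the sign of $c$. In the example above, this is just the inductive comparison $M\prec\ell_1$ inside $\mcL_{\le1}$.

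This is how the paper's proof runs. It makes both the domination claim and the split of $\Phi$ into exponential-type and power-log parts unnecessary.
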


\begin{proof}
We prove the stronger statement by induction on height: for monomials in
$\mcL_{\le h}$, any two are comparable, and any nonzero finite real linear
combination of distinct monomials in $\mcL_{\le h}$ which tend to $+\infty$ has
a unique dominant term. In particular, such a nonzero finite sum tends either to
$+\infty$ or to $-\infty$.

For $h=0$, this is the usual lexicographic comparison of power-log monomials.
Indeed, if
$$
P(x)=\prod_{j=0}^r \ell_j(x)^{\alpha_j},
\qquad
Q(x)=\prod_{j=0}^r \ell_j(x)^{\beta_j},
$$
then
$$
\frac{P(x)}{Q(x)}
=
\prod_{j=0}^r \ell_j(x)^{\alpha_j-\beta_j}.
$$
The first nonzero exponent difference determines whether this ratio tends to
$0$ or $+\infty$. If all exponent differences vanish, then $P=Q$. Hence
distinct power-log monomials are totally ordered by growth, and any finite
nonzero sum of distinct power-log monomials has a unique dominant term.

Now assume the statement has been proved up to height $h$. Let
$L_1,L_2\in\mcL_{\le h+1}$. We may write them in reduced form as
$$
L_i(x)=\exp(\Phi_i(x))P_i(x),
\qquad i=1,2,
$$
where each $\Phi_i$ is a finite real linear combination of monomials from
$\mcL_{\le h}$ which tend to $+\infty$, and $P_i\in\mcP$. Then
$$
\log\frac{L_1(x)}{L_2(x)}
=
\Phi_1(x)-\Phi_2(x)+\log\frac{P_1(x)}{P_2(x)}.
$$
Here $\Phi_1-\Phi_2$ is, after combining equal terms, a finite real linear
combination of distinct monomials from $\mcL_{\le h}$ which tend to $+\infty$.
Also
$$
\log\frac{P_1(x)}{P_2(x)}
$$
is a finite real linear combination of the iterated logarithms $\ell_j(x)$ for
$j\ge 1$. These logarithmic terms belong to $\mcL_0$, tend to $+\infty$, and do
not overlap with the terms appearing in the reduced exponents, because iterated
logarithms $\ell_j$ with $j\ge 1$ were excluded from reduced exponents.

Thus, after combining equal terms, the whole logarithmic ratio is either zero as
an eventual germ or a finite real linear combination of distinct monomials in
$\mcL_{\le h}$ which tend to $+\infty$. By the induction hypothesis, in the
nonzero case it has a unique dominant term. If the dominant term has positive
coefficient, then $\log\frac{L_1(x)}{L_2(x)}\to+\infty,$ and hence
$$
\frac{L_1(x)}{L_2(x)}\to+\infty,
$$
so $L_2\prec L_1$. If the dominant term has negative coefficient, then
$\log\frac{L_1(x)}{L_2(x)}\to-\infty,$ and hence
$$
\frac{L_1(x)}{L_2(x)}\to0,
$$
so $L_1\prec L_2$. The remaining case is that the logarithmic ratio is zero as an eventual germ.
Then $\log\frac{L_1(x)}{L_2(x)}=0$ eventually, so
$$
\frac{L_1(x)}{L_2(x)}=1
$$
eventually. Hence
$$
L_1=L_2
$$
as eventual germs.

Thus any two monomials in $\mcL_{\le h+1}$ are comparable. Once this
comparability is known, any finite nonzero real linear combination of distinct
monomials in $\mcL_{\le h+1}$ has a unique largest monomial, and that largest
monomial is its dominant term. This completes the induction.

Therefore, for all $L_1,L_2\in\mcL$, exactly one of
$$
L_1\prec L_2,\qquad L_2\prec L_1,\qquad L_1=L_2
$$
holds.
\end{proof}

Thus $\mcL$ is a reduced scale of positive log-exp monomial germs. In particular,
for $L_1,L_2\in\mcL$, exactly one of the following holds:
$$
L_1\prec L_2,\qquad L_2\prec L_1,\qquad L_1=L_2.
$$

We now isolate the lower-order log-exp factors which are allowed to multiply a gamma factor.

\begin{defn}
Let
$$
\mcL_{<x}:=\{L\in\mcL:\log L(x)=o(x)\qquad (x\to+\infty)\}.
$$
\end{defn}

We next enlarge the class by allowing one gamma factor. Before doing so, we need to avoid a duplication problem. By Stirling's formula, a gamma-type monomial has a leading log-exp shadow. If both the gamma factor and its pure log-exp shadow were allowed as independent growth monomials, then the same leading growth could appear in two different forms. The following definition excludes these resonant pure log-exp monomials before gamma-type monomials are added.

\begin{defn}
A pure log-exp monomial $L\in\mcL$ is called \emph{gamma-resonant} if there exist
$$
\lambda\in\RR,\qquad H\in\mcL_{<x},\qquad \kappa>0
$$
such that
$$
\frac{L(x)}{e^{\lambda x}H(x)\Gamma(\kappa x+1)}\to C
\qquad (x\to+\infty)
$$
for some $C\in\RR_{>0}$.
\end{defn}

We now adjoin a single gamma factor. This is broad enough for the principal examples in the paper, while avoiding the exact regrouping failures that occur for general products of shifted gamma factors.

\begin{defn}
A \emph{growth monomial} is either a pure log-exp germ of the form
$$
M(x)=L(x),
$$
where $L\in\mcL$ is not gamma-resonant, or a gamma-type germ of the form
$$
M(x)=e^{\lambda x}H(x)\Gamma(\kappa x+1),
$$
where
$$
\lambda\in\RR,\qquad H\in\mcL_{<x},\qquad \kappa>0.
$$
\end{defn}

The next lemma records the Stirling form used to compare gamma-type monomials with pure log-exp monomials.

\begin{lemma} \label{stirling form}
For each $\kappa>0$ there exists a constant
$C_{\kappa}\in\mathbb R_{>0}$ such that
$$
\Gamma(\kappa x+1)
=
C_{\kappa}\, \exp\!\bigl(\kappa x\log x+(\kappa\log\kappa-\kappa)x\bigr)\, x^{1/2}\,(1+o(1))
$$
as $x\to+\infty$.
\end{lemma}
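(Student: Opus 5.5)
The plan is to derive this from Stirling's formula for the Gamma function on the real axis, $\log\Gamma(y+1)=y\log y-y+\tfrac12\log y+\tfrac12\log(2\pi)+o(1)$ as $y\to+\infty$, taken as classical, and then substitute $y=\kappa x$. Exponentiating gives
$$
\Gamma(\kappa x+1)=\sqrt{2\pi}\,\exp\bigl(\kappa x\log(\kappa x)-\kappa x\bigr)(\kappa x)^{1/2}(1+o(1)).
$$

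Next expand $\kappa x\log(\kappa x)=\kappa x\log x+(\kappa\log\kappa)x$. This turns the exponent into $\kappa x\log x+(\kappa\log\kappa-\kappa)x$, exactly the form in Lemma \ref{stirling form}. The factor $(\kappa x)^{1/2}$ splits as $\kappa^{1/2}x^{1/2}$. Collecting the constants, we take
$$
C_\kappa=\sqrt{2\pi\kappa}>0.
$$
Finally, $o(1)$ as $y\to\infty$ is $o(1)$ as $x\to\infty$, since $y=\kappa x$ with $\kappa>0$ fixed is an increasing bijection of neighborhoods of $+\infty$. Exponentiating the additive $o(1)$ error gives the multiplicative factor $1+o(1)$.

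There is no real obstacle. The only point to check is that the real-variable Stirling formula holds for continuous $x\to+\infty$, not just for integers. I would cite the standard asymptotic for $\log\Gamma$ on the positive real axis, for example via Binet's formula, to make this explicit.
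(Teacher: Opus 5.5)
Your proposal is correct and follows the same route as the paper, which substitutes $y=\kappa x$ into Stirling's formula $\Gamma(y+1)=\sqrt{2\pi}\,e^{-y}y^{y+1/2}(1+o(1))$ and observes that this is the stated form. Your version makes the rearrangement explicit and identifies the constant as $C_\kappa=\sqrt{2\pi\kappa}$.
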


\begin{proof}
Stirling's formula gives
$$
\Gamma(\kappa x+1)
=
\sqrt{2\pi}\, e^{-\kappa x}(\kappa x)^{\kappa x+1/2}(1+o(1))
\qquad (x\to+\infty),
$$
which is exactly the stated formula.
\end{proof}

We also need to know that the leading log-exp shadow of a gamma-type monomial lies on the pure log-exp side of the scale.

\begin{lemma}[Stirling shadow lies in $\mcL$]\label{lem:stirling shadow in L}
Let $H\in\mcL_{<x}$, $\lambda\in\RR$, and $\kappa>0$. Then the expression
$$
e^{\lambda x}H(x)
\exp\!\bigl(\kappa x\log x+(\kappa\log\kappa-\kappa)x\bigr)x^{1/2}
$$
has the prescribed recursive form and therefore determines an element of $\mcL$.
\end{lemma}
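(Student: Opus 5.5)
We need to show that the expression
$$
E(x)=e^{\lambda x}H(x)\exp\!\bigl(\kappa x\log x+(\kappa\log\kappa-\kappa)x\bigr)x^{1/2}
$$
can be rewritten in the reduced form $\exp(\Phi(x))P(x)$. Here $P\in\mcP$, and $\Phi$ must be a reduced exponent of some height. The plan is to absorb every factor into a single exponential plus a power-log factor, and then check the reduction conditions term by term.

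\textbf{Step 1: the part not involving $H$.}
\begin{itemize}
\item The polynomial-log exponent $\kappa x\log x+(\kappa\log\kappa-\kappa+\lambda)x$ is a real linear combination of the monomials $x\,\ell_1(x)$ and $x=\ell_0(x)$. Both lie in $\mcL_0=\mcP$ and tend to $+\infty$, and neither is an iterated logarithm $\ell_j$ with $j\ge1$.
\item We combine the two $x$ terms into a single coefficient. If that coefficient vanishes, we simply drop the term, since the empty or shorter sum is allowed. The coefficient $\kappa$ of $x\log x$ is nonzero.
\item The factor $x^{1/2}=\ell_0^{1/2}$ lies in $\mcP$.
\end{itemize}

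\textbf{Step 2: the factor $H$.} Write $H$ in its own reduced form $H=\exp(\Phi_H)P_H$, with $\Phi_H$ a reduced exponent of height at most $h$ and $P_H\in\mcP$. Then
$$
E=\exp\bigl(\Phi_H+\kappa x\ell_1+c\,x\bigr)\,P_H\,x^{1/2}.
$$
\begin{itemize}
\item $P_Hx^{1/2}\in\mcP$, because $\mcP$ is a group: exponents add and the coefficient stays $1$.
\item The new exponent $\Phi:=\Phi_H+\kappa x\ell_1+cx$ is a finite real linear combination of monomials in $\mcL_{\le \max(h,0)}$. Each of these monomials tends to $+\infty$, and none is an iterated logarithm.
\item After merging coincident monomials, we discard any whose coefficient cancels to zero. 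For example, $\Phi_H$ might contain $x$ or $x\log x$ itself; merging keeps all surviving coefficients in $\RR^\times$ and the monomials distinct.
\end{itemize}
Hence $\Phi$ is a reduced exponent of height at most $\max(h,0)$, and $E\in\mcL_{\max(h,0)+1}$.

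\textbf{Where care is needed.} The main point is bookkeeping: when the monomials of $\Phi_H$ are merged with $x$ and $x\ell_1$, the reduction conditions must survive.
\begin{itemize}
\item Cancellation could remove terms, but that is allowed.
\item Merging can never introduce a forbidden $\ell_j$, because none of the summands is one.
\item Merging can never introduce a non-divergent monomial, for the same reason.
\end{itemize}

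The hypothesis $H\in\mcL_{<x}$ is not needed for membership in $\mcL$; it only matters elsewhere, namely for dominance. If $H$ happens to be of height $0$ (pure power-log), then $\Phi_H$ is empty and the argument above goes through unchanged.
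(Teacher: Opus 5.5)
Your proposal is correct and follows essentially the same route as the paper. Both write $H=\exp(\Psi)P$ (with $\Psi$ empty in the power-log case), absorb $\kappa x\log x$ and $(\lambda+\kappa\log\kappa-\kappa)x$ into the exponent, note that $Px^{1/2}\in\mcP$, and merge equal monomials while deleting zero coefficients. Your remark that the hypothesis $H\in\mcL_{<x}$ is not used for membership in $\mcL$ is accurate.
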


\begin{proof}
If $H\in\mcL_0$, then $H=P$ for some $P\in\mcP$, and we take $\Psi=0$. If
$H\in\mcL_h$ with $h\ge 1$, then we write
$$
H(x)=\exp(\Psi(x))P(x),
$$
where $P\in\mcP$ and $\Psi$ is a reduced exponent of height at most $h-1$. In either case,
$$
e^{\lambda x}H(x)
\exp\!\bigl(\kappa x\log x+(\kappa\log\kappa-\kappa)x\bigr)x^{1/2}
=
\exp(\Phi(x))\,P(x)x^{1/2},
$$
where
$$
\Phi(x)=\Psi(x)+\kappa x\log x+(\lambda+\kappa\log\kappa-\kappa)x.
$$
After combining equal monomials and deleting zero coefficients, $\Phi$ is a reduced exponent of finite height. The terms $x\log x$ and $x$ belong to $\mcL_0$, tend to $+\infty$, and neither is an iterated logarithm $\ell_j$ for $j\ge 1$. Also $P(x)x^{1/2}\in\mcP$. Hence the displayed expression has the prescribed recursive form and determines an element of $\mcL$.
\end{proof}

Using these two lemmas, we can prove trichotomy for growth monomials.

\begin{lemma}\label{Trichotomy of Growth Scales}
Let $M_1$ and $M_2$ be growth monomials. Then exactly one of the following holds:
$$
M_1\prec M_2,\qquad M_2\prec M_1,\qquad M_1=M_2 \text{ as eventual germs}.
$$
\end{lemma}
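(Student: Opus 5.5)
We split into three cases according to the types of $M_1$ and $M_2$: both pure log-exp, both gamma-type, and mixed. In each case the strategy is to reduce to the trichotomy lemma for reduced log-exp monomials, with Lemma~\ref{stirling form} and Lemma~\ref{lem:stirling shadow in L} handling the gamma factors. Mutual exclusivity is immediate in every case: $L_1\prec L_2$ forces $M_1/M_2\to 0$, which rules out both $M_2\prec M_1$ and eventual equality.

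\emph{Both pure.} If $M_1=L_1$ and $M_2=L_2$ with $L_i\in\mcL$, the trichotomy lemma for $\mcL$ applies directly.

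\emph{Both gamma-type.} Write $M_i=e^{\lambda_i x}H_i(x)\Gamma(\kappa_i x+1)$. By Lemma~\ref{stirling form}, $M_i = C_{\kappa_i} S_i(1+o(1))$, where $S_i$ is the Stirling shadow, which lies in $\mcL$ by Lemma~\ref{lem:stirling shadow in L}. Applying trichotomy in $\mcL$ to $S_1,S_2$ gives three possibilities:
\begin{itemize}
\item If $S_1\prec S_2$, then $M_1/M_2 = (C_{\kappa_1}/C_{\kappa_2})(S_1/S_2)(1+o(1))\to 0$, so $M_1\prec M_2$.
\item The case $S_2\prec S_1$ is symmetric.
\item If $S_1=S_2$ as germs, we recover the parameters. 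The coefficient of $x\log x$ in the exponent must be $\kappa_i$, because $H_i\in\mcL_{<x}$ contributes an exponent that is $o(x)$ and so cannot produce an $x\log x$ term or an $x$ term. Reducedness of the exponent then makes this coefficient identification legitimate. Hence $\kappa_1=\kappa_2$. Comparing the coefficient of $x$ gives $\lambda_1=\lambda_2$. Finally $H_1x^{1/2}=H_2x^{1/2}$, so $H_1=H_2$, and therefore $M_1=M_2$ exactly.
\end{itemize}

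\emph{Mixed case.} Take $M_1=L$ pure and $M_2=e^{\lambda x}H\Gamma(\kappa x+1)$, with Stirling shadow $S$. Compare $L$ with $S$ in $\mcL$:
\begin{itemize}
\item If $L\prec S$, then $L/M_2 = (L/S)\,(S/M_2)\to 0\cdot C_\kappa^{-1}$, so $L\prec M_2$.
\item If $S\prec L$, then symmetrically $M_2\prec L$.
\item If $L=S$, then $L/M_2\to C_\kappa^{-1}>0$. This is exactly the statement that $L$ is gamma-resonant, which the definition of growth monomial excludes.
\end{itemize}
So only the strict cases occur; equality is impossible in the mixed case.

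The main obstacle is the equality subcase of the gamma--gamma comparison. One must justify that an equality of shadows as germs forces equality of the individual data $(\kappa,\lambda,H)$. This requires a uniqueness-of-reduced-representation fact for $\mcL$, namely that two reduced expressions representing the same germ have identical exponents and power-log parts. I would prove it from the induction in the trichotomy lemma: the difference of exponents is a reduced combination which, if nonzero, tends to $\pm\infty$, so it must vanish identically, after which the power-log factors agree. One then also has to check that terms from $\Psi_i$ cannot cancel against $x\log x$ or $x$, which holds because $H_i\in\mcL_{<x}$. Without this step one would only get $M_1/M_2\to C_{\kappa_1}/C_{\kappa_2}$, and would need it to equal $1$.
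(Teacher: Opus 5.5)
Your proof is correct. Your pure--pure and mixed cases are exactly the paper's.

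The difference is in the gamma--gamma case. You compare the Stirling shadows $S_1,S_2$ in $\mcL$ first, and then must recover $(\kappa,\lambda,H)$ when $S_1=S_2$. The paper instead compares the parameters lexicographically, using only $\log(H_2/H_1)=o(x)$:
\begin{itemize}
\item first $\kappa$, via the $x\log x$ term;
\item then $\lambda$, via the $x$ term;
\item only when $\kappa_1=\kappa_2$ and $\lambda_1=\lambda_2$ does it invoke trichotomy in $\mcL$, applied to $H_1,H_2$.
\end{itemize}
In that ordering the gamma factors cancel exactly before any germ comparison is made. So the constant $C_{\kappa_1}/C_{\kappa_2}$ never has to be identified, and the alternative $H_1=H_2$ gives $M_1=M_2$ at once.

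Your ordering buys a uniform picture in which everything is compared through $\mcL$, matching the mixed case, at the cost of the extra recovery step. That step does not need the uniqueness of reduced representations in $\mcL$ that you propose to extract from the induction. Since $S_1=S_2$ as eventual germs, the function
\[
\log\frac{S_1}{S_2}=(\kappa_1-\kappa_2)\,x\log x+\bigl(\lambda_1-\lambda_2+\kappa_1\log\kappa_1-\kappa_1-\kappa_2\log\kappa_2+\kappa_2\bigr)x+\log\frac{H_1}{H_2}
\]
vanishes eventually. Because $\log(H_1/H_2)=o(x)$, dividing by $x\log x$ and then by $x$ gives $\kappa_1=\kappa_2$ and $\lambda_1=\lambda_2$, hence $H_1=H_2$ eventually. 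This is precisely the paper's parameter comparison run inside your equality subcase. So the step you flag as the main obstacle reduces to two lines of elementary asymptotics rather than a structural fact about reduced expressions.
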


\begin{proof}
There are three cases.

First suppose both $M_1$ and $M_2$ are pure log-exp growth monomials. Then the claim follows from the trichotomy built into $\mcL$.

Now suppose both monomials are gamma-type. Write
$$
M_i(x)=e^{\lambda_i x}H_i(x)\Gamma(\kappa_i x+1)
\qquad (i=1,2).
$$
By Lemma~\ref{stirling form},
$$
\Gamma(\kappa_i x+1)
=
C_i\,\exp\!\bigl(\kappa_i x\log x+(\kappa_i\log\kappa_i-\kappa_i)x\bigr)\,x^{1/2}(1+o(1)).
$$
Hence
$$
\frac{M_2(x)}{M_1(x)}
=
\frac{H_2(x)}{H_1(x)}
\cdot
\frac{C_2}{C_1}
\cdot
\exp\!\bigl((\kappa_2-\kappa_1)x\log x+
(\lambda_2-\lambda_1+\kappa_2\log\kappa_2-\kappa_2-\kappa_1\log\kappa_1+\kappa_1)x\bigr)
\cdot
(1+o(1)).
$$
Since $H_1,H_2\in\mcL_{<x}$, we have
$$
\log\left(\frac{H_2(x)}{H_1(x)}\right)=o(x).
$$
Thus, if $\kappa_2\neq \kappa_1$, the $x\log x$ term forces either $M_1\prec M_2$ or $M_2\prec M_1$. Suppose $\kappa_2=\kappa_1$. Then the gamma factors cancel exactly, and
$$
\frac{M_2(x)}{M_1(x)}
=
e^{(\lambda_2-\lambda_1)x}\frac{H_2(x)}{H_1(x)}.
$$
If $\lambda_2\neq \lambda_1$, then the exponential factor forces either $M_1\prec M_2$ or $M_2\prec M_1$. If $\lambda_2=\lambda_1$, then the comparison reduces to
$$
\frac{H_2(x)}{H_1(x)}.
$$
Since $H_1,H_2\in\mcL$, exactly one of
$$
H_1\prec H_2,\qquad H_2\prec H_1,\qquad H_1=H_2
$$
holds. Hence exactly one of
$$
M_1\prec M_2,\qquad M_2\prec M_1,\qquad M_1=M_2
$$
holds.

Finally suppose one monomial is pure log-exp and the other is gamma-type. Without loss of generality, write
$$
M_1(x)=L(x),
\qquad
M_2(x)=e^{\lambda x}H(x)\Gamma(\kappa x+1).
$$
By Lemma~\ref{stirling form}, $M_2$ is asymptotic to a positive constant times the pure log-exp expression
$$
S(x):=e^{\lambda x}H(x)\exp\!\bigl(\kappa x\log x+(\kappa\log\kappa-\kappa)x\bigr)x^{1/2}.
$$
By Lemma~\ref{lem:stirling shadow in L}, the displayed expression determines an element $S_0\in\mcL$. By the trichotomy in $\mcL$, exactly one of
$$
L\prec S_0,\qquad S_0\prec L,\qquad L=S_0
$$
holds. In the first two cases, since $M_2$ is asymptotic to a positive constant times $S_0$, we get $M_1\prec M_2$ or $M_2\prec M_1$. In the last case, $L/M_2$ tends to a positive finite constant, so $L$ is gamma-resonant, contradicting the assumption that $L$ is an allowed pure log-exp growth monomial. Hence one of the two domination alternatives must hold.
\end{proof}

We can now define the coefficient class used for tail differences. Each coefficient germ is a finite sum of growth blocks. A growth block has a growth monomial $M(x)$ and two possible phases: a non-oscillatory part and an alternating part. The alternating factor is written as $e^{i\pi x}$ because on integer arguments it becomes $(-1)^N$, which is the oscillation appearing in the main examples.

\begin{defn}
A \emph{basic term} is a germ of the form
$$
T(x)=A\,\sigma(x)\,M(x)
$$
where $A\in\CC^\times$, $ \sigma(x)\in\{1,e^{i\pi x}\}$, and $M(x)$ is a growth monomial.
\end{defn}

\begin{defn}
For growth monomials $M_1,M_2$, write $M_1\prec M_2$ if and only if
$$
\frac{M_1(x)}{M_2(x)}\to 0
\qquad (x\to+\infty).
$$
\end{defn}

\begin{lemma}\label{Dominance of Distinct Growth Scales}
Let $M_1$ and $M_2$ be distinct growth monomials with $M_2\prec M_1$.
Then
$$
\frac{M_2(x)}{M_1(x)}\to 0
\qquad (x\to+\infty).
$$
Hence for any nonzero constants $A_1,A_2$ and any phases $\sigma_1,\sigma_2\in\{1,e^{i\pi x}\}$,
$$
\frac{A_2\sigma_2(x)M_2(x)}{A_1\sigma_1(x)M_1(x)}\to 0
\qquad (x\to+\infty).
$$
\end{lemma}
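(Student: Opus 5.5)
The first assertion, $M_2(x)/M_1(x)\to 0$, is simply the definition of $M_2\prec M_1$ for growth monomials given just above, so nothing beyond unpacking that definition is needed. The role of Lemma~\ref{Trichotomy of Growth Scales} is only to guarantee that for distinct $M_1,M_2$ one of the two strict relations holds. Once we label the monomials so that $M_2\prec M_1$, the first claim follows directly.

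For the second claim, I would factor the quotient as
$$
\frac{A_2\sigma_2(x)M_2(x)}{A_1\sigma_1(x)M_1(x)}=\frac{A_2}{A_1}\cdot\frac{\sigma_2(x)}{\sigma_1(x)}\cdot\frac{M_2(x)}{M_1(x)}.
$$
The constant $A_2/A_1$ is finite and nonzero because $A_1\neq 0$.

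The phase quotient $\sigma_2/\sigma_1$ lies in $\{1,e^{i\pi x},e^{-i\pi x}\}$. For real $x$ it has modulus exactly $1$. On integer arguments it equals $\pm 1$.

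Taking absolute values, the modulus of the quotient is therefore $|A_2/A_1|\cdot|M_2(x)/M_1(x)|$. This tends to $0$, and hence the complex quotient tends to $0$.

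One point deserves care. The growth monomials are positive real germs on the positive axis, since $\Gamma(\kappa x+1)>0$ and log-exp monomials are positive. This means no hidden sign or phase enters through $M_i$. It also means that dividing by $M_1(x)$, and by $\sigma_1(x)M_1(x)$, is legitimate for all sufficiently large $x$.

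I expect no real obstacle. The only subtlety is remembering that the oscillatory phase is bounded in modulus, so it cannot spoil the limit, even though $\sigma_2/\sigma_1$ has no limit itself.
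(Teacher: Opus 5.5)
Your proposal is correct and matches the paper's proof: the first claim is the definition of $\prec$, and the second follows because $|\sigma_2(x)/\sigma_1(x)|=1$. Your added remark that growth monomials are eventually positive, so that the quotients are defined for large $x$, is accurate and fills in a point the paper leaves implicit.
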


\begin{proof}
The first statement is just the definition of $\prec$. The second follows from
$$
\left|\frac{\sigma_2(x)}{\sigma_1(x)}\right|=1.
$$
\end{proof}

\begin{defn}\label{Phase blocks}
A \emph{phase block} of growth scale $M$ is an expression of the form
$$
B(x)=A\,M(x)+B\,e^{i\pi x}M(x)
$$
with $A,B\in\CC$.
\end{defn}

\begin{defn}
Let $\mcM$ denote the class consisting of the zero germ and all exact finite sums
$$
f(x)=\sum_{j=1}^r \left(A_j+B_j e^{i\pi x}\right)M_j(x)
$$
where
$$
M_1\succ M_2\succ\cdots\succ M_r
$$
are growth monomials and for each $j$ the pair $(A_j,B_j)$ is not simultaneously zero.
\end{defn}

By the trichotomy of growth scales, the ordered phase-block representation of an element of $\mcM$ is unique after combining equal growth scales and deleting zero blocks. In particular, for a fixed growth scale $M$, the coefficients $A$ and $B$ in  
$$A M(x)+B e^{i\pi x}M(x)$$ are unique, since $M(x)$ is eventually nonzero and $1$ and $e^{i\pi x}$ are linearly independent as germs on the positive real axis. Thus an element of $\mcM$ is an exact finite sum of phase blocks, one for each growth scale, with the zero germ also allowed.

The next few lemmas record the algebraic consequences of this finite ordered form. They are used later to show that a tail-difference sequence has at most one coefficient germ in $\mcM$.

\begin{lemma}\label{Closure under subtraction}
If $f,g\in\mcM$, then $f-g\in\mcM$.
\end{lemma}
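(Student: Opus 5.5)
The plan is to show that $f-g$, after collecting equal growth scales, is again an exact finite sum of phase blocks with strictly ordered growth monomials, or else the zero germ. First, the trivial cases. If $f=0$, then $f-g=-g$. Negating the coefficients $(A_j,B_j)$ of $g$ keeps each pair nonzero and does not change the ordering $M_1\succ\cdots\succ M_r$, so $-g\in\mcM$. If $g=0$ there is nothing to prove.

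In general, write
$$
f(x)=\sum_{j=1}^r (A_j+B_je^{i\pi x})M_j(x),
\qquad
g(x)=\sum_{k=1}^s (A'_k+B'_ke^{i\pi x})N_k(x),
$$
and take the finite set $\{M_1,\dots,M_r,N_1,\dots,N_s\}$ of growth monomials. Because the classification is by exact germ equality, each $N_k$ either coincides as a germ with some $M_j$ or differs from all of them. Lemma~\ref{Trichotomy of Growth Scales} then shows that the distinct elements of this set are totally ordered by $\prec$. Relabel them as $K_1\succ K_2\succ\cdots\succ K_t$. For each $K_m$, collect the coefficients:
$$
\alpha_m=\sum_{M_j=K_m}A_j-\sum_{N_k=K_m}A'_k,
\qquad
\beta_m=\sum_{M_j=K_m}B_j-\sum_{N_k=K_m}B'_k.
$$
Each sum has at most one term, because within $f$, and within $g$, the scales are distinct. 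This gives the exact germ identity
$$
f-g=\sum_{m=1}^t(\alpha_m+\beta_me^{i\pi x})K_m(x).
$$

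Now delete every index $m$ with $(\alpha_m,\beta_m)=(0,0)$. The remaining blocks are still strictly ordered and each has a nonzero coefficient pair, so they form an element of $\mcM$ in the sense of the definition. If every block is deleted, then $f-g$ is the zero germ, which $\mcM$ contains by definition. Either way $f-g\in\mcM$.

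The one delicate step is merging equal scales. The merge needs $M_j=N_k$ as eventual germs, not only $M_j\asymp N_k$. For example, a pure log-exp monomial that agrees with a gamma-type monomial up to a constant factor would break the exact-sum structure. This is exactly the situation that the exclusion of gamma-resonant monomials rules out: the last case of Lemma~\ref{Trichotomy of Growth Scales} shows that two growth monomials are either equal as germs or strictly comparable. Since the class is defined by exact finite sums with no remainders, no error terms need to be tracked.
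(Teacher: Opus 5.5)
Your proof is correct and follows the same route as the paper's proof. Both take the union of the two supports, order it via Lemma~\ref{Trichotomy of Growth Scales}, extend coefficients by zero, subtract blockwise, and delete vanishing blocks, with the zero germ covering the case of total cancellation. Your remark that merging requires equality as germs, which the exclusion of gamma-resonant monomials guarantees, correctly identifies what the paper uses implicitly when it cites the trichotomy lemma.
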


\begin{proof}
If either $f$ or $g$ is the zero germ, the claim is immediate. Otherwise write
$$
f(x)=\sum_{j=1}^r \left(A_j+B_j e^{i\pi x}\right)M_j(x),
\qquad
g(x)=\sum_{k=1}^s \left(C_k+D_k e^{i\pi x}\right)N_k(x)
$$
with
$$
M_1\succ\cdots\succ M_r,
\qquad
N_1\succ\cdots\succ N_s.
$$
Take the finite union of the supports
$$
S=\{M_1,\dots,M_r\}\cup\{N_1,\dots,N_s\}.
$$
By Lemma~\ref{Trichotomy of Growth Scales}, equal growth scales are equal as eventual germs. Hence $S$ is a finite set of distinct growth monomials, which may be ordered decreasingly:
$$
P_1\succ P_2\succ\cdots\succ P_t.
$$
Extend coefficients by zero off their supports. Then
$$
f(x)-g(x)=\sum_{j=1}^t \left((a_j-c_j)+(b_j-d_j)e^{i\pi x}\right)P_j(x).
$$
Delete the indices for which both coefficients vanish. The remaining expression is either zero or an exact finite sum in the defining form for $\mcM$. Since the zero germ is included in $\mcM$, we get $f-g\in\mcM$.
\end{proof}

\begin{cor}\label{M finite linear combinations}
The class $\mcM$ is closed under finite linear combinations.
\end{cor}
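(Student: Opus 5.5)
The corollary should follow directly from Lemma~\ref{Closure under subtraction} together with closure under scalar multiplication. A finite linear combination $\sum_{i=1}^m \lambda_i f_i$ with $\lambda_i\in\CC$ and $f_i\in\mcM$ is built from scalar multiples and sums, so it suffices to prove two things. First, $\mcM$ is closed under multiplication by a complex scalar. Second, $\mcM$ is closed under addition. After that, an induction on $m$ finishes the argument.

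\textbf{Scalar multiples.} Let $\lambda\in\CC$ and $f\in\mcM$. If $\lambda=0$ or $f$ is the zero germ, then $\lambda f=0$, which lies in $\mcM$ by definition. Otherwise write
\[
f(x)=\sum_{j=1}^r \left(A_j+B_j e^{i\pi x}\right)M_j(x),
\qquad M_1\succ\cdots\succ M_r .
\]
Then
\[
\lambda f(x)=\sum_{j=1}^r \left(\lambda A_j+\lambda B_j e^{i\pi x}\right)M_j(x).
\]
The growth monomials and their strict ordering are unchanged. Since $\lambda\neq 0$, the pair $(\lambda A_j,\lambda B_j)$ is not simultaneously zero for any $j$. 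So $\lambda f$ is again in the defining form of $\mcM$.

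\textbf{Sums.} I would derive additivity from subtraction. Take $\lambda=-1$ above, so $-g\in\mcM$ whenever $g\in\mcM$. Then Lemma~\ref{Closure under subtraction} gives
\[
f+g=f-(-g)\in\mcM .
\]

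\textbf{Induction.} For general $m$, note that $\lambda_1 f_1\in\mcM$ by the scalar step. Assuming $\sum_{i<m}\lambda_i f_i\in\mcM$, the sum step gives $\sum_{i\le m}\lambda_i f_i\in\mcM$. The empty combination is the zero germ, which lies in $\mcM$.

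There is no real obstacle here. The only point needing care is that scaling does not create zero blocks, and this is exactly why $\lambda=0$ is handled separately. The merging of coincident growth scales is already handled inside Lemma~\ref{Closure under subtraction}, via Lemma~\ref{Trichotomy of Growth Scales}.
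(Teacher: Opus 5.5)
Your proposal is correct and follows the paper's proof: closure under scalar multiplication comes directly from the definition (with $\lambda=0$ giving the zero germ), and closure under addition comes from Lemma~\ref{Closure under subtraction} via $f+g=f-(-g)$. Two details you make explicit are left implicit in the paper: the check that $(\lambda A_j,\lambda B_j)$ stays nonzero when $\lambda\neq0$, and the induction on the number of terms.
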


\begin{proof}
Scalar multiplication is immediate from the definition, with the case of scalar $0$ giving the zero germ. Addition follows from subtraction closure, since $f+g=f-(-g)$.
\end{proof}

\begin{lemma}\label{Leading Term Form}
Let
$$
f(x)=\sum_{j=1}^r \left(A_j+B_j e^{i\pi x}\right)M_j(x)\in\mcM
$$
be nonzero, with
$$
M_1\succ M_2\succ\cdots\succ M_r.
$$
Then
$$
f(x)=\left(A_1+B_1e^{i\pi x}\right)M_1(x)+o(M_1(x))
\qquad (x\to+\infty).
$$
\end{lemma}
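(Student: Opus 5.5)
Write $f(x)=(A_1+B_1e^{i\pi x})M_1(x)+g(x)$, where
$$
g(x)=\sum_{j=2}^r \left(A_j+B_j e^{i\pi x}\right)M_j(x).
$$
It suffices to show $g(x)=o(M_1(x))$. Since $g$ is a finite sum, it is enough to treat each block $(A_j+B_je^{i\pi x})M_j(x)$ with $j\ge 2$ separately and then add the finitely many $o(M_1)$ terms. If $r=1$, then $g=0$ and there is nothing to prove.

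Fix $j\ge 2$. By the ordering $M_1\succ M_2\succ\cdots\succ M_r$ and transitivity of $\prec$ (a product of ratios, each tending to $0$ or bounded, tends to $0$), we have $M_j\prec M_1$, that is, $M_j(x)/M_1(x)\to 0$. Lemma~\ref{Dominance of Distinct Growth Scales} applies directly, or we can bound by hand: since $|e^{i\pi x}|=1$,
$$
\left|\frac{(A_j+B_je^{i\pi x})M_j(x)}{M_1(x)}\right|\le (|A_j|+|B_j|)\,\frac{|M_j(x)|}{|M_1(x)|}\to 0 .
$$
We also need $M_1(x)\neq 0$ eventually, so that the ratio makes sense. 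This holds because growth monomials are positive germs: elements of $\mcL$ are exponentials times power-log monomials, and gamma-type monomials are $e^{\lambda x}H(x)\Gamma(\kappa x+1)$ with all factors positive for large $x$.

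Summing over $j=2,\dots,r$ gives $g(x)/M_1(x)\to 0$, which is the claimed expansion. The only point needing any care is the transitivity of $\prec$ along the chain, together with the eventual positivity of $M_1$. Both are immediate from the definitions, so no real obstacle is expected.
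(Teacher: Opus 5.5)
Your proof is correct and follows the paper's argument. Both reduce to $M_j(x)/M_1(x)\to 0$ for $j\ge 2$ via the dominance lemma, then use $|e^{i\pi x}|=1$ to absorb the finitely many lower blocks into $o(M_1(x))$. Your added remarks on transitivity along the chain and eventual positivity of $M_1$ are accurate; the paper leaves them implicit.
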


\begin{proof}
By the dominance lemma, $\frac{M_j(x)}{M_1(x)}\to 0$ for $j\ge 2$. Hence,
$$
\sum_{j=2}^r \left(A_j+B_j e^{i\pi x}\right)M_j(x)=o(M_1(x)).
$$
\end{proof}

\begin{lemma}\label{lemma:integer uniqueness}
If $f,g\in\mcM$ and
$$
f(n)=g(n)
$$
for all sufficiently large integers $n$, then $f=g$ as eventual germs.
\end{lemma}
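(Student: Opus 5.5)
The plan is to reduce to the case of a single germ that vanishes on the integers, and then to read off its leading phase block along even and odd integers separately. Set $h:=f-g$. By Lemma~\ref{Closure under subtraction}, $h\in\mcM$, and by hypothesis $h(n)=0$ for all sufficiently large integers $n$. It then suffices to show that $h$ is the zero germ, because $f=g+h$ as eventual germs on the positive real axis.

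Suppose for contradiction that $h$ is not the zero germ. Write it in ordered phase-block form
$$
h(x)=\sum_{j=1}^r\left(A_j+B_je^{i\pi x}\right)M_j(x),\qquad M_1\succ M_2\succ\cdots\succ M_r,
$$
where $(A_1,B_1)\neq(0,0)$. By Lemma~\ref{Leading Term Form},
$$
h(x)=\left(A_1+B_1e^{i\pi x}\right)M_1(x)+o(M_1(x))\qquad(x\to+\infty).
$$
This asymptotic relation holds as $x\to+\infty$ through the reals, so it holds in particular along integers $x=n$, where $e^{i\pi n}=(-1)^n$.

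Next I check that $M_1(n)\neq 0$ eventually, so that I may divide by it. A pure log-exp monomial has the form $\exp(\Phi)P$ with $P$ a product of positive powers of iterated logarithms, so it is eventually positive. A gamma-type monomial $e^{\lambda x}H(x)\Gamma(\kappa x+1)$ is eventually positive because $H\in\mcL$ is eventually positive and $\Gamma$ is positive on $(0,\infty)$. Dividing $0=h(n)$ by $M_1(n)$ then gives
$$
A_1+B_1(-1)^n\to 0\qquad(n\to\infty).
$$
Along even $n$ this forces $A_1+B_1=0$, and along odd $n$ it forces $A_1-B_1=0$. Hence $A_1=B_1=0$, which contradicts the requirement that the leading block is nonzero. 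Therefore $h$ is the zero germ and $f=g$.

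The only step needing care is the passage from the real-variable $o(M_1(x))$ statement to the integer subsequence, together with the eventual positivity of $M_1$. Both are immediate from the definitions, so I expect no real obstacle. The substantive input is the uniqueness of the ordered representation, which comes from Lemma~\ref{Trichotomy of Growth Scales} through Lemma~\ref{Closure under subtraction}. It is also the reason the two phases $1$ and $e^{i\pi x}$, which become $1$ and $(-1)^n$ on integers, can be separated using the even and odd subsequences.
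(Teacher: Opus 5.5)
Your proof is correct and follows the paper's argument essentially verbatim: you pass to $h=f-g\in\mcM$ via the subtraction lemma, apply the leading-term form, and read off $A_1+B_1=0$ and $A_1-B_1=0$ from the even and odd integer subsequences. Your explicit check that $M_1(n)\neq 0$ eventually is a detail the paper leaves implicit; note only that the exponents in $P\in\mcP$ are arbitrary reals, so positivity comes from the iterated logarithms being eventually positive, not from the exponents being positive.
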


\begin{proof}
By the subtraction lemma, $h:=f-g$ belongs to $\mcM$. If $h=0$, there is nothing to prove. So suppose $h\neq 0$.

Write
$$
h(x)=\sum_{j=1}^r \left(A_j+B_j e^{i\pi x}\right)M_j(x)
$$
with
$$
M_1\succ M_2\succ\cdots\succ M_r.
$$
Then by Lemma~\ref{Leading Term Form},
$$
h(x)=\left(A_1+B_1e^{i\pi x}\right)M_1(x)+o(M_1(x)).
$$

Now evaluate on even integers. Since $e^{i\pi n}=1$ when $n$ is even,
$$
h(n)=(A_1+B_1)M_1(n)+o(M_1(n))
$$
along the even subsequence. Since $h(n)=0$ for all sufficiently large integers, we must have $A_1+B_1=0$.

Similarly, on odd integers $e^{i\pi n}=-1$, so
$$
h(n)=(A_1-B_1)M_1(n)+o(M_1(n))
$$
along the odd subsequence. Again $h(n)=0$ for all sufficiently large integers, so $A_1-B_1=0$.

Hence $A_1=B_1=0$, contradicting the assumption that the leading phase block is nonzero. Therefore $h=0$.
\end{proof}

\begin{cor}
If $f\in\mcM$ and $f(n)=0$ for all sufficiently large integers $n$, then $f=0$ as an eventual germ.
\end{cor}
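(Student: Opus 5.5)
This is a direct specialization of Lemma~\ref{lemma:integer uniqueness}, so I will obtain it by comparing $f$ with the zero germ rather than reproving anything. The zero germ $g:=0$ belongs to $\mcM$ by definition, since the class was defined to contain it. By hypothesis $f(n)=0=g(n)$ for all sufficiently large integers $n$. Lemma~\ref{lemma:integer uniqueness} then gives $f=g=0$ as eventual germs.

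If one prefers a self-contained argument, I would repeat the leading-block extraction. Suppose $f\neq 0$ and write it in ordered phase-block form with leading block $(A_1+B_1e^{i\pi x})M_1(x)$. Lemma~\ref{Leading Term Form} gives
$$f(n)=(A_1+B_1e^{i\pi n})M_1(n)+o(M_1(n)).$$
Growth monomials are eventually nonzero, because they are exponentials of real germs or gamma factors at large positive arguments. So we may divide by $M_1(n)$ and restrict to the even subsequence and then to the odd subsequence. Since $f(n)=0$ along both, this forces $A_1+B_1=0$ and $A_1-B_1=0$. Hence $A_1=B_1=0$, which contradicts the fact that the leading block is nonzero.

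There is essentially no obstacle here. The only point to check is that $0\in\mcM$, so that the earlier lemma applies, and the definition of $\mcM$ states this explicitly.
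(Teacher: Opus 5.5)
Your proposal is correct and matches the paper's proof, which applies Lemma~\ref{lemma:integer uniqueness} with $g=0$. Your optional self-contained argument simply repeats that lemma's even/odd leading-block extraction, with the explicit and accurate note that growth monomials are eventually positive.
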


\begin{proof}
Apply Lemma~\ref{lemma:integer uniqueness} with $g=0$.
\end{proof}

At this point the coefficient-side ambiguity is controlled: if the tail-difference sequence comes from a coefficient germ in $\mcM$, then that germ is uniquely determined.

\subsection{Eventual sequences and shift-nonresonance}

The next condition is used when comparing the finite part with an ordinary limit. Suppose the normalized tail $R_N$ converges. Then $R_N-R_{N+1}\to 0$. Since the selector will satisfy
$$
U_c(N)-U_c(N+1)=R_N-R_{N+1},
$$
we need a condition ensuring that a selector with small first difference tends to zero. Shift-nonresonance is the condition used for this step.

\begin{defn}
For a sequence $X=(X_N)_{N\ge 0}$, define
$$
(\Delta X)_N:=X_N-X_{N+1}.
$$
\end{defn}

\begin{defn}[Eventual sequences]
Two sequences $(U_N)$ and $(V_N)$ are identified if they agree for all
sufficiently large $N$. An \emph{eventual sequence} is an equivalence class for
this relation.
\end{defn}

\begin{defn}[Shift-nonresonance]
An eventual sequence $U=(U_N)$ is called \emph{shift-nonresonant} if either
$$
U_N \to 0
$$
or there exist constants $\varepsilon>0$ and $N_0$ such that
$$
|U_N-U_{N+1}|\ge \varepsilon |U_N|
\qquad (N\ge N_0).
$$
Equivalently for the second condition, whenever $U_N\neq 0$,
$$
\left|1-\frac{U_{N+1}}{U_N}\right|\ge \varepsilon
\qquad (N\ge N_0).
$$
\end{defn}

This condition is not automatic. It will be included as part of admissibility. The following lemma explains why it is useful.

\begin{lemma}\label{lem:shift nonresonant goes to zero}
If $U$ is shift-nonresonant and
$$
(\Delta U)_N\to 0,
$$
then
$$
U_N\to 0.
$$
\end{lemma}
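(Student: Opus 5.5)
The plan is to split according to the two alternatives in the definition of shift-nonresonance. If the first alternative holds, namely $U_N\to 0$, there is nothing to prove. So the real content is the second alternative: there are $\varepsilon>0$ and $N_0$ with
$$
|U_N-U_{N+1}|\ge \varepsilon |U_N|\qquad (N\ge N_0).
$$

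In that case I will argue directly from this inequality, without any summation or telescoping. The left-hand side is exactly $|(\Delta U)_N|$, so for $N\ge N_0$
$$
|U_N|\le \varepsilon^{-1}|(\Delta U)_N|.
$$
By hypothesis $(\Delta U)_N\to 0$. Since $\varepsilon$ is a fixed positive constant, the right-hand side tends to $0$, and hence $|U_N|\to 0$, which is $U_N\to 0$.

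Two small bookkeeping points need checking. First, the statement concerns eventual sequences, so modifying finitely many terms changes neither the hypotheses nor the conclusion. We may therefore restrict to $N\ge N_0$ without loss. Second, the equivalent ratio formulation $|1-U_{N+1}/U_N|\ge\varepsilon$ is only stated where $U_N\ne 0$. I will use the product form of the inequality, which holds trivially when $U_N=0$, so no division is needed.

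I do not expect a genuine obstacle. The only step requiring care is recognizing that the nonresonance inequality bounds $|U_N|$ pointwise by the first difference. This avoids any argument about summing differences, which would require summability of $(\Delta U)_N$ and is not available.
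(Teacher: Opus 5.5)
Your proposal is correct and matches the paper's proof: the first alternative of shift-nonresonance gives $U_N\to 0$ by definition, and under the second alternative the bound $|U_N|\le \varepsilon^{-1}|(\Delta U)_N|$ for $N\ge N_0$ forces $U_N\to 0$. Your choice of the product form of the inequality, rather than the ratio form, so that you never divide by $U_N$, is a sensible bit of care that the paper leaves implicit.
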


\begin{proof}
If $U_N$ is shift-nonresonant by the first condition, $U_N \to 0$ by definition. If $U_N$ is shift-nonresonant by the second condition, then we have
$$
|U_N|
\le \varepsilon^{-1}|U_N-U_{N+1}|
=
\varepsilon^{-1}|(\Delta U)_N|
$$
for all sufficiently large $N$. Since $(\Delta U)_N\to 0$, the result follows.
\end{proof}

\subsection{Phase splitting and canonical continuation}

We now separate the two allowed phases. The phase-free class contains the growth part only. A general element of $\mcM$ splits into a non-oscillatory part and an alternating part.

\begin{defn}[Phase-free class]
Let $\mcM^\flat$ denote the class consisting of the zero germ and all exact finite sums
$$
a(x)=\sum_{j=1}^r C_j M_j(x)
$$
where
$$
M_1\succ M_2\succ\cdots\succ M_r
$$
are growth monomials and $C_j\in\CC^\times$.
\end{defn}

\begin{lemma}[Canonical phase splitting]
Every $c\in\mcM$ admits a unique decomposition
$$
c(x)=c^{(+)}(x)+e^{i\pi x}c^{(-)}(x)
$$
with $c^{(+)},c^{(-)}\in\mcM^\flat$.
\end{lemma}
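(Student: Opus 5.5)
Existence comes from regrouping the ordered phase-block representation of $c$. Uniqueness comes from the linear independence argument already used in Lemma~\ref{lemma:integer uniqueness}, applied to each growth scale separately.

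For existence, if $c=0$ I take $c^{(+)}=c^{(-)}=0$. Otherwise I write
$$
c(x)=\sum_{j=1}^r \left(A_j+B_j e^{i\pi x}\right)M_j(x),\qquad M_1\succ\cdots\succ M_r ,
$$
and set
$$
c^{(+)}(x):=\sum_{A_j\neq 0} A_jM_j(x),\qquad c^{(-)}(x):=\sum_{B_j\neq 0} B_jM_j(x).
$$
Deleting the indices with zero coefficient keeps the ordering $\succ$ and leaves only nonzero coefficients, so each sum is either the zero germ or an element of $\mcM^\flat$. The identity $c=c^{(+)}+e^{i\pi x}c^{(-)}$ then holds termwise.

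For uniqueness, I suppose $c^{(+)}+e^{i\pi x}c^{(-)}=d^{(+)}+e^{i\pi x}d^{(-)}$ with all four pieces in $\mcM^\flat$. Put $p=c^{(+)}-d^{(+)}$ and $q=c^{(-)}-d^{(-)}$. The subtraction argument of Lemma~\ref{Closure under subtraction}, with $B$-coefficients identically zero, shows $p,q\in\mcM^\flat$: merge the supports using Lemma~\ref{Trichotomy of Growth Scales}, order them, and delete zero coefficients. The hypothesis gives $p+e^{i\pi x}q=0$ as a germ.

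If $p$ and $q$ are not both zero, I expand over the merged, strictly ordered support $P_1\succ\cdots\succ P_t$ of $p$ and $q$, keeping only scales where at least one coefficient is nonzero. This writes $p+e^{i\pi x}q$ as a nonzero element of $\mcM$ in ordered phase-block form, with blocks $(a_j+b_je^{i\pi x})P_j$ and $(a_1,b_1)\neq(0,0)$. Restricting the germ identity to integers gives $p(n)+e^{i\pi n}q(n)=0$ for large $n$. The even/odd subsequence argument of Lemma~\ref{lemma:integer uniqueness}, using Lemma~\ref{Leading Term Form}, then yields $a_1+b_1=0$ and $a_1-b_1=0$, a contradiction. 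Hence $p=q=0$, so $c^{(\pm)}=d^{(\pm)}$.

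The main obstacle is making the uniqueness rigorous at the level of germs rather than formal expressions. The point is that $\mcM^\flat$ sits inside $\mcM$, and Lemma~\ref{Trichotomy of Growth Scales} guarantees that "equal growth scale" means equal germ, so merged supports really are distinct monomials. Once that is in place, Lemma~\ref{lemma:integer uniqueness} does the rest.
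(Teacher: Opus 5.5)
Your proof is correct and matches the paper's: existence by splitting the ordered phase-block form into its $A_j$- and $B_j$-parts is exactly what the paper does. For uniqueness the paper only cites uniqueness of the ordered phase-block representation in $\mcM$; your leading-block even/odd argument, borrowed from Lemma~\ref{lemma:integer uniqueness}, is an explicit proof of that fact, so it fills in the paper's route rather than departing from it.
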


\begin{proof}
If $c=0$, take $c^{(+)}=c^{(-)}=0$. Otherwise write $c$ in its unique ordered phase-block form
$$
c(x)=\sum_{j=1}^r \left(A_j+B_j e^{i\pi x}\right)M_j(x)
$$
and set
$$
c^{(+)}(x):=\sum_{j=1}^r A_j M_j(x),
\qquad
c^{(-)}(x):=\sum_{j=1}^r B_j M_j(x).
$$
After deleting zero coefficient terms, these are elements of $\mcM^\flat$, with the zero germ allowed. Existence is immediate, and uniqueness follows from uniqueness of the ordered phase-block decomposition in $\mcM$.
\end{proof}

We now define a canonical analytic continuation of every growth monomial. Since
each element of $\mcL$ carries a chosen reduced expression, we use that expression
for analytic continuation. Namely, we replace $x$ by $z$ and use principal
branches for the iterated logarithms and powers occurring in the chosen reduced
expression.

\begin{defn}[Canonical continuation of growth monomials]
If $M(x)=L(x)$ is a pure log-exp growth monomial, define
$$
M(z):=L(z)
$$
on some right half-plane $\Re z>\sigma_M$, where $L(z)$ is obtained from the
chosen reduced expression $L$ by replacing $x$ by $z$ and taking principal branches
in the iterated logarithms and powers occurring in $L$.

If
$$
M(x)=e^{\lambda x}H(x)\Gamma(\kappa x+1)
$$
is a gamma-type growth monomial, define
$$
M(z):=e^{\lambda z}H(z)\Gamma(\kappa z+1)
$$
on some right half-plane $\Re z>\sigma_M$, where $H(z)$ is obtained from the
chosen reduced expression $H$ by replacing $x$ by $z$ and taking principal branches
in the iterated logarithms and powers occurring in $H$.
\end{defn}

\begin{defn}[Canonical continuation of phase-free germs]
If $a=0$, define its canonical continuation to be the zero function on every right half-plane. If
$$
a(x)=\sum_{j=1}^r C_j M_j(x)\in\mcM^\flat
$$
is nonzero, define its \emph{canonical continuation} on the half-plane
$$
\Re z>\sigma_a:=\max_j \sigma_{M_j}
$$
by
$$
a(z):=\sum_{j=1}^r C_j M_j(z).
$$
\end{defn}

\begin{rem}
This continuation is canonical because the ordered decomposition in $\mcM^\flat$
is exact and unique, and each growth monomial is continued from its chosen reduced
expression, together with the standard meromorphic continuation of $\Gamma$ in
the gamma-type case.
\end{rem}

\subsection{Canonical Bromwich transforms}

The selector will be defined from the canonical continuation by Bromwich inversion. Not every element of $\mcM^\flat$ is assumed to have such a transform. Bromwich-admissibility is an extra condition.

\begin{defn}[Bromwich-admissible phase-free germs]
Let $a\in\mcM^\flat$, with canonical continuation $a(z)$. We say that $a$ is
\emph{Bromwich-admissible} if there exists $\sigma_0\in\mathbb R$ such that for
every $\sigma>\sigma_0$:

\begin{enumerate}
\item the integral
$$
\mcB[a](u):=
\frac{1}{2\pi i}\int_{\sigma-i\infty}^{\sigma+i\infty}
a(z)e^{-uz}\,dz
$$
converges absolutely for almost every $u\in\mathbb R$

\item $\mcB[a](u)$ is independent of the choice of $\sigma>\sigma_0$

\item for every sufficiently large integer $N$, the integral
$$
\int_{-\infty}^{\infty}\mcB[a](u)e^{Nu}\,du
$$
converges absolutely and satisfies
$$
a(N)=\int_{-\infty}^{\infty}\mcB[a](u)e^{Nu}\,du.
$$
\end{enumerate}
\end{defn}

\begin{rem}
No extra datum is chosen here. The transform $\mcB[a]$ is defined directly from
the canonical continuation of $a$ by the standard Bromwich contour.
\end{rem}

\begin{rem}
The zero germ is Bromwich-admissible, with
$$
\mcB[0]=0.
$$
The corresponding selector operators are
$$
\mcS_+[0]=0,
\qquad
\mcS_-[0]=0.
$$
\end{rem}

The following linearity statements are used only when the relevant transforms exist.

\begin{lemma}[Linearity of the Bromwich transform]\label{lem:B-linearity}
If $a,b,a+b\in\mcM^\flat$ are Bromwich-admissible, then
$$
\mcB[a+b]=\mcB[a]+\mcB[b]
$$
almost everywhere on $\mathbb R$. If $a$ is Bromwich-admissible and $\lambda\in\CC$, then $\lambda a$ is Bromwich-admissible and
$$
\mcB[\lambda a]=\lambda\,\mcB[a]
$$
almost everywhere on $\mathbb R$.
\end{lemma}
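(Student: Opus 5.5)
The plan is to work directly from the definition of Bromwich-admissibility, using linearity of the contour integral and the uniqueness of the canonical continuation. Fix $a,b\in\mcM^\flat$ with $a$, $b$, $a+b$ Bromwich-admissible, with thresholds $\sigma_a,\sigma_b,\sigma_{a+b}$.

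\emph{Step 1: identify the continuations.} I first show that the canonical continuation of $a+b$ equals $a(z)+b(z)$ on a common right half-plane. The ordered decomposition of $a+b$ is obtained by merging the supports of $a$ and $b$, as in Lemma~\ref{Closure under subtraction}, and then deleting terms whose coefficients cancel. Each growth monomial is continued from its own reduced expression. Both continuations are therefore the same finite sum $\sum_j (C_j+D_j)M_j(z)$, because a cancelled term contributes $0\cdot M_j(z)=0$. This holds on $\Re z>\max(\sigma_{M_j})$.

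\emph{Step 2: split the contour integral.} Take $\sigma>\max(\sigma_a,\sigma_b,\sigma_{a+b})$. For almost every $u$, each of the three integrals converges absolutely; this uses a countable union of null sets, or simply the intersection of three full-measure sets. On that set, linearity of absolutely convergent integrals gives
\[
\mcB[a+b](u)=\frac{1}{2\pi i}\int_{\sigma-i\infty}^{\sigma+i\infty}\bigl(a(z)+b(z)\bigr)e^{-uz}\,dz=\mcB[a](u)+\mcB[b](u).
\]
Property (2), independence of $\sigma$, ensures that this common $\sigma$ computes each transform as defined.

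\emph{Step 3: scalar multiples.} If $\lambda=0$, then $\lambda a$ is the zero germ, which is admissible with $\mcB[0]=0$. If $\lambda\ne0$, the ordered decomposition of $\lambda a$ is $\sum \lambda C_j M_j$ with the same monomials. Its canonical continuation is $\lambda a(z)$, with the same $\sigma_0$. Absolute convergence and $\sigma$-independence pass directly from $a$, and $\mcB[\lambda a]=\lambda\mcB[a]$. Property (3) follows from
\[
\int_{-\infty}^{\infty}\lambda\mcB[a](u)e^{Nu}\,du=\lambda a(N)=(\lambda a)(N).
\]

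\emph{Main obstacle.} The only step requiring care is Step 1. I must make sure that merging monomials and deleting zero coefficients does not change the continuation. This relies on equal growth scales being literally the same reduced expression, so that their continuations agree. Lemma~\ref{Trichotomy of Growth Scales} gives equality only as eventual germs, not as expressions. The way through is that two reduced expressions agreeing as real germs define continuations that are analytic on a half-plane and agree on a real ray, so the identity theorem forces them to agree. This dispatches the worry.
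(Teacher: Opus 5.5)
Your proof is correct and follows the same route as the paper, which simply invokes linearity of the contour integral and notes that every condition in Bromwich-admissibility is preserved under scalar multiplication. Your Step 1, which identifies the canonical continuation of $a+b$ with $a(z)+b(z)$ by merging supports and applying the identity theorem, makes explicit a point the paper leaves tacit. The null-set bookkeeping in Step 2 is harmless but unnecessary: on the line $\Re z=\sigma$ one has $|e^{-uz}|=e^{-u\sigma}$, so absolute convergence there does not depend on $u$.
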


\begin{proof}
This follows immediately from linearity of the contour integral. The scalar case is included because all defining conditions for Bromwich-admissibility are preserved under scalar multiplication.
\end{proof}

\subsection{Canonical selector operators}

We now invert the two first-order operators which appear in the phase split. The non-oscillatory part uses the kernel $(1-e^u)^{-1}$, and the alternating part uses $(1+e^u)^{-1}$.

Unless otherwise stated, improper integrals over an interval are understood to converge absolutely. If $PV\int_I F(t)\,dt$ is written and $F$ has finitely many real simple poles $(p_1,\dots,p_m)$ in $I$, then
$$
PV\int_I F(t)\,dt
:=
\lim_{\varepsilon\to0^+}
\int_{\{t\in I:\ |t-p_j|>\varepsilon\text{ for all }j\}}F(t)\,dt,
$$
provided the integral converges absolutely away from the poles and the displayed limit exists.

\begin{defn}[Selector operators on phase-free germs]
Let $a\in\mcM^\flat$ be Bromwich-admissible.

The \emph{non-oscillatory selector operator} is
$$
\mcS_+[a](N)
:=
PV\int_{-\infty}^{\infty}
\frac{\mcB[a](u)e^{Nu}}{1-e^u}\,du
$$
whenever the integral is absolutely convergent away from $u=0$ and the above
principal value exists.

The \emph{alternating selector operator} is
$$
\mcS_-[a](N)
:=
\int_{-\infty}^{\infty}
\frac{\mcB[a](u)e^{Nu}}{1+e^u}\,du
$$
whenever this improper integral converges absolutely.
\end{defn}

\begin{rem}
The only real singularity in the non-oscillatory kernel occurs at $u=0$, where
$1-e^u=0$. This is why $\mcS_+$ is defined by principal value. The alternating
kernel has no real singularity.
\end{rem}

The next two propositions show that these selectors really invert the two difference operators.

\begin{prop}[Exact inversion of $1-S$]\label{prop:exact inversion plus}
Let $a\in\mcM^\flat$ be Bromwich-admissible, and suppose $\mcS_+[a](N)$ and $\mcS_+[a](N+1)$ both exist for all sufficiently large integers $N$. Then for all sufficiently large integers $N$, 
$$
\mcS_+[a](N)-\mcS_+[a](N+1)=a(N).
$$
\end{prop}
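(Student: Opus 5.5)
The plan is to subtract the two principal-value integrals and combine them into a single integral. The point is that the kernel $e^{Nu}/(1-e^u)$ evaluated at $N$ and at $N+1$ differs by exactly $e^{Nu}$, which removes the pole. Concretely, for $u\neq 0$,
$$
\frac{e^{Nu}}{1-e^u}-\frac{e^{(N+1)u}}{1-e^u}=\frac{e^{Nu}(1-e^u)}{1-e^u}=e^{Nu}.
$$
Fix $N$ large enough that $\mcS_+[a](N)$ and $\mcS_+[a](N+1)$ both exist and that condition (3) of Bromwich-admissibility holds at $N$.

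For each $\varepsilon>0$, let $I_\varepsilon=\{u:|u|>\varepsilon\}$. Both truncated integrals
$$
\int_{I_\varepsilon}\frac{\mcB[a](u)e^{Nu}}{1-e^u}\,du,\qquad \int_{I_\varepsilon}\frac{\mcB[a](u)e^{(N+1)u}}{1-e^u}\,du
$$
converge absolutely, since the hypothesis on $\mcS_+$ at $N$ and at $N+1$ includes absolute convergence away from $u=0$. They can therefore be subtracted under a single integral sign, and the identity above gives
$$
\int_{I_\varepsilon}\frac{\mcB[a](u)e^{Nu}}{1-e^u}\,du-\int_{I_\varepsilon}\frac{\mcB[a](u)e^{(N+1)u}}{1-e^u}\,du=\int_{I_\varepsilon}\mcB[a](u)e^{Nu}\,du .
$$
Since each principal-value limit exists separately, the limit of the difference as $\varepsilon\to0^+$ is $\mcS_+[a](N)-\mcS_+[a](N+1)$.

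On the right-hand side, condition (3) says that $\mcB[a](u)e^{Nu}$ is absolutely integrable on all of $\RR$. By dominated convergence (or simply because $\int_{|u|\le\varepsilon}|\mcB[a]e^{Nu}|\to0$), the truncated integrals tend to $\int_{-\infty}^{\infty}\mcB[a](u)e^{Nu}\,du$. Condition (3) also identifies this integral as $a(N)$, which proves the claim.

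The only point needing care is the interchange of limit and subtraction. This is handled by requiring both principal values to exist, which is exactly the hypothesis, and by the absolute integrability supplied by condition (3). The almost-everywhere definition of $\mcB[a]$ causes no difficulty, since every step is an integral identity.
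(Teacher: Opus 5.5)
Your proposal is correct and takes the same approach as the paper: subtract the truncated integrals over $\{|u|>\varepsilon\}$, use $\frac{e^{Nu}-e^{(N+1)u}}{1-e^u}=e^{Nu}$, and pass to the principal-value limit using condition (3) of Bromwich-admissibility. You also spell out why the truncated integrals of $\mcB[a](u)e^{Nu}$ converge to the full integral (absolute integrability), a step the paper leaves implicit.
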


\begin{proof}
For each $\varepsilon>0$,
$$
\begin{aligned}
&
\int_{-\infty}^{-\varepsilon}
\left(
\frac{\mcB[a](u)e^{Nu}}{1-e^u}
-
\frac{\mcB[a](u)e^{(N+1)u}}{1-e^u}
\right)\,du
+
\int_{\varepsilon}^{\infty}
\left(
\frac{\mcB[a](u)e^{Nu}}{1-e^u}
-
\frac{\mcB[a](u)e^{(N+1)u}}{1-e^u}
\right)\,du
\\
&=
\int_{-\infty}^{-\varepsilon}\mcB[a](u)e^{Nu}\,du
+
\int_{\varepsilon}^{\infty}\mcB[a](u)e^{Nu}\,du.
\end{aligned}
$$
Taking the principal-value limit as $\varepsilon\to0^+$ gives
$$
\mcS_+[a](N)-\mcS_+[a](N+1)
=
\int_{-\infty}^{\infty}\mcB[a](u)e^{Nu}\,du
=
a(N)
$$
by Bromwich-admissibility.
\end{proof}

\begin{prop}[Exact inversion of $1+S$]\label{prop:exact inversion minus}
Let $a\in\mcM^\flat$ be Bromwich-admissible, and suppose $\mcS_-[a](N)$ and
$\mcS_-[a](N+1)$ both exist for all sufficiently large integers $N$. Then for all sufficiently large integers $N$, 
$$
\mcS_-[a](N)+\mcS_-[a](N+1)=a(N).
$$
\end{prop}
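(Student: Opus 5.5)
The argument mirrors the proof of Proposition~\ref{prop:exact inversion plus}, with the kernel $(1+e^u)^{-1}$ in place of $(1-e^u)^{-1}$. Because $1+e^u>0$ on $\mathbb R$, no principal value is needed and all integrals are ordinary absolutely convergent ones. This makes the argument strictly simpler than in the $1-S$ case.

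Fix a sufficiently large integer $N$ so that three things hold: $\mcS_-[a](N)$ exists, $\mcS_-[a](N+1)$ exists, and the Bromwich reconstruction
$$a(N)=\int_{-\infty}^{\infty}\mcB[a](u)e^{Nu}\,du$$
holds with absolute convergence. Each of these holds for all large $N$ by hypothesis and by condition (3) of Bromwich-admissibility, so the intersection of the three conditions is again a tail of the integers.

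Since both defining integrals converge absolutely, linearity of the Lebesgue integral lets us add them:
$$\mcS_-[a](N)+\mcS_-[a](N+1)=\int_{-\infty}^{\infty}\mcB[a](u)\,\frac{e^{Nu}+e^{(N+1)u}}{1+e^u}\,du.$$
We then use the pointwise identity $e^{Nu}+e^{(N+1)u}=e^{Nu}(1+e^u)$, valid for every real $u$. The kernel therefore cancels exactly, and the right-hand side becomes $\int_{-\infty}^{\infty}\mcB[a](u)e^{Nu}\,du$. By the reconstruction identity this equals $a(N)$.

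The only point that needs any care is justifying the splitting of the sum into one integral, and this is immediate from absolute convergence of each summand. Measurability of $\mcB[a]$ is implicit in the admissibility definition. Beyond that there is no genuine obstacle. In particular, there is no $\varepsilon$-excision and no limit interchange, unlike the principal-value step in the proof of Proposition~\ref{prop:exact inversion plus}.
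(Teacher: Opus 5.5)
Your proof is correct and follows the paper's argument exactly: you add the two absolutely convergent selector integrals, use $e^{Nu}+e^{(N+1)u}=e^{Nu}(1+e^u)$ to cancel the kernel, and invoke the reconstruction condition of Bromwich-admissibility. Your extra bookkeeping is a slight sharpening of the paper's terser write-up: you intersect the three ``sufficiently large $N$'' conditions and note that absolute convergence justifies combining the integrals.
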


\begin{proof}
By definition,
$$
\mcS_-[a](N)+\mcS_-[a](N+1)
=
\int_{-\infty}^{\infty}
\left(
\frac{\mcB[a](u)e^{Nu}}{1+e^u}
+
\frac{\mcB[a](u)e^{(N+1)u}}{1+e^u}
\right)\,du.
$$
Since
$$
\frac{e^{Nu}}{1+e^u}+\frac{e^{(N+1)u}}{1+e^u}=e^{Nu},
$$
we obtain
$$
\mcS_-[a](N)+\mcS_-[a](N+1)
=
\int_{-\infty}^{\infty}\mcB[a](u)e^{Nu}\,du
=
a(N)
$$
again by Bromwich-admissibility.
\end{proof}

\begin{cor}[Linearity of selector operators]\label{cor:S-linearity}
Let $a,b,a+b\in\mcM^\flat$ be Bromwich-admissible. Whenever all selector integrals involved exist,
$$
\mcS_\pm[a+b]=\mcS_\pm[a]+\mcS_\pm[b].
$$
If $a$ is Bromwich-admissible and $\lambda\in\CC$, then, whenever the selector integrals for $a$ exist,
$$
\mcS_\pm[\lambda a]=\lambda\,\mcS_\pm[a].
$$
\end{cor}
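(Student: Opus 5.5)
The statement to prove is Corollary~\ref{cor:S-linearity}: the selector operators $\mcS_\pm$ are additive and homogeneous on Bromwich-admissible phase-free germs whenever the relevant integrals exist. The plan is to reduce everything to Lemma~\ref{lem:B-linearity} and then use linearity of the integrals defining $\mcS_+$ and $\mcS_-$.

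\textbf{Additivity.} Fix a sufficiently large integer $N$ for which $\mcS_\pm[a](N)$, $\mcS_\pm[b](N)$ and $\mcS_\pm[a+b](N)$ all exist. By Lemma~\ref{lem:B-linearity}, $\mcB[a+b]=\mcB[a]+\mcB[b]$ almost everywhere on $\mathbb R$. Changing the integrand on a null set does not change any Lebesgue integral. So for the alternating selector we get
$$
\mcS_-[a+b](N)=\int_{-\infty}^{\infty}\frac{(\mcB[a](u)+\mcB[b](u))e^{Nu}}{1+e^u}\,du .
$$
Since the integrals for $a$ and $b$ converge absolutely by hypothesis, this integral splits as $\mcS_-[a](N)+\mcS_-[b](N)$.

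For $\mcS_+$ we argue at fixed $\varepsilon>0$ first. On the set $\{|u|>\varepsilon\}$ the integrands for $a$ and $b$ are absolutely integrable, by the definition of the selector. The truncated integral for $a+b$ therefore equals the sum of the truncated integrals for $a$ and $b$. Both principal-value limits exist by hypothesis, and the limit of a sum is the sum of the limits. This gives $\mcS_+[a+b](N)=\mcS_+[a](N)+\mcS_+[b](N)$.

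\textbf{Homogeneity.} Lemma~\ref{lem:B-linearity} gives that $\lambda a$ is Bromwich-admissible with $\mcB[\lambda a]=\lambda\mcB[a]$ almost everywhere. The $\mcS_-$ integrand is then multiplied by $\lambda$, so absolute convergence is preserved and the integral scales by $\lambda$. For $\mcS_+$, the truncated integrals scale by $\lambda$, so the principal-value limit exists and equals $\lambda\,\mcS_+[a](N)$.

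When $\lambda=0$, both sides are $0$. This is consistent with the earlier remark that $\mcB[0]=0$ and $\mcS_\pm[0]=0$.

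\textbf{Main point requiring care.} The only step needing attention is the principal value in $\mcS_+$. Linearity must be applied to the $\varepsilon$-truncated integrals before passing to the limit, not to the principal values directly. This is legitimate precisely because the hypothesis "whenever all selector integrals involved exist" supplies both absolute convergence away from $u=0$ and existence of each limit.

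The almost-everywhere equality of the transforms needs no further comment, since the integrals are Lebesgue integrals over sets of full measure.
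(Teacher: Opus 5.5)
Your proposal is correct and takes the same route as the paper, whose proof is the one-line reduction to Lemma~\ref{lem:B-linearity} together with linearity of the defining integrals. You spell this out in more detail, and you rightly note that for $\mcS_+$ linearity must be applied to the $\varepsilon$-truncated integrals before passing to the principal-value limit.
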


\begin{proof}
This follows from Lemma~\ref{lem:B-linearity} and linearity of the defining integrals.
\end{proof}

\subsection{Canonical selectors}

We now combine the two phase-free selectors. The non-oscillatory part is inverted by $\mcS_+$, while the alternating part is inverted by $\mcS_-$ and then multiplied by $(-1)^N$ on integer arguments.

\begin{defn}[Canonical selector attached to a coefficient germ]
Let $c\in\mcM$, with phase split
$$
c(x)=c^{(+)}(x)+e^{i\pi x}c^{(-)}(x),
\qquad
c^{(+)},c^{(-)}\in\mcM^\flat.
$$
Assume that $c^{(+)}$ and $c^{(-)}$ are Bromwich-admissible and that the
selector integrals $\mcS_+[c^{(+)}](N)$ and $\mcS_-[c^{(-)}](N)$ exist for all
sufficiently large $N$. Define the \emph{canonical selector attached to $c$} by
$$
U_c(N):=\mcS_+[c^{(+)}](N)+(-1)^N\mcS_-[c^{(-)}](N).
$$
\end{defn}

\begin{prop}[Exact selector identity]\label{prop:Uc identity}
Let $c\in\mcM$ admit a canonical selector $U_c$. Then for all sufficiently large
$N$,
$$
U_c(N)-U_c(N+1)=c(N).
$$
\end{prop}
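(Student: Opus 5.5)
The plan is to split $U_c$ into its two phase pieces and apply the two inversion propositions to each piece separately. Since $c$ admits a canonical selector, the definition gives the following facts. The parts $c^{(+)}$ and $c^{(-)}$ in the canonical phase splitting are Bromwich-admissible. The integrals $\mcS_+[c^{(+)}](N)$ and $\mcS_-[c^{(-)}](N)$ exist for all $N\ge N_1$, for some $N_1$. In particular both $\mcS_\pm$ values also exist at $N+1$ whenever $N\ge N_1$. So the hypotheses of Proposition~\ref{prop:exact inversion plus} and Proposition~\ref{prop:exact inversion minus} are met. Those two propositions give, for all sufficiently large $N$,
$$
\mcS_+[c^{(+)}](N)-\mcS_+[c^{(+)}](N+1)=c^{(+)}(N),
\qquad
\mcS_-[c^{(-)}](N)+\mcS_-[c^{(-)}](N+1)=c^{(-)}(N).
$$

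Next, I would expand $U_c(N)-U_c(N+1)$ directly from the definition of the canonical selector:
$$
U_c(N)-U_c(N+1)=\bigl(\mcS_+[c^{(+)}](N)-\mcS_+[c^{(+)}](N+1)\bigr)+(-1)^N\mcS_-[c^{(-)}](N)-(-1)^{N+1}\mcS_-[c^{(-)}](N+1).
$$
Since $-(-1)^{N+1}=(-1)^N$, the alternating terms combine to $(-1)^N\bigl(\mcS_-[c^{(-)}](N)+\mcS_-[c^{(-)}](N+1)\bigr)$. The sign flip in $(-1)^N$ turns the difference operator into the sum operator, and this is exactly why the alternating part is inverted by $\mcS_-$. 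Substituting the two displayed identities then gives
$$
U_c(N)-U_c(N+1)=c^{(+)}(N)+(-1)^Nc^{(-)}(N).
$$

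Finally, I would identify the right-hand side with $c(N)$. This uses $e^{i\pi N}=(-1)^N$ for integer $N$, together with the phase splitting $c(x)=c^{(+)}(x)+e^{i\pi x}c^{(-)}(x)$ evaluated at $x=N$.

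The only point needing care is the values of the phase-free germs at integers. The quantities $c^{(\pm)}(N)$ produced by the inversion propositions come from Bromwich-admissibility condition (3), which states $a(N)=\int\mcB[a](u)e^{Nu}\,du$ with $a(N)$ the value of the germ itself. This is the same value that appears in the phase splitting, so no discrepancy between the germ and its canonical continuation arises at integer points. I do not expect a genuine obstacle. The one thing to check is that "sufficiently large" can be taken uniformly: take $N$ at least the maximum of the thresholds from the two propositions and the range on which the germ identities hold.
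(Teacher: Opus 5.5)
Your proposal is correct and follows the paper's proof: you apply the two exact inversion propositions to $c^{(+)}$ and $c^{(-)}$, use $-(-1)^{N+1}=(-1)^N$ to turn the alternating difference into a sum, and recombine using the phase splitting together with $e^{i\pi N}=(-1)^N$. Your remark on taking the "sufficiently large" threshold uniformly across the two pieces is a harmless bit of extra care that the paper leaves implicit.
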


\begin{proof}
By Proposition~\ref{prop:exact inversion plus},
$$
\mcS_+[c^{(+)}](N)-\mcS_+[c^{(+)}](N+1)=c^{(+)}(N)
$$
and by Proposition~\ref{prop:exact inversion minus},
$$
\mcS_-[c^{(-)}](N)+\mcS_-[c^{(-)}](N+1)=c^{(-)}(N).
$$
Hence
$$
\begin{aligned}
U_c(N)-U_c(N+1)
&=
\bigl(\mcS_+[c^{(+)}](N)-\mcS_+[c^{(+)}](N+1)\bigr)
\\
&\quad
+
(-1)^N\bigl(\mcS_-[c^{(-)}](N)+\mcS_-[c^{(-)}](N+1)\bigr)
\\
&=
c^{(+)}(N)+(-1)^N c^{(-)}(N)
\\
&=
c(N).
\end{aligned}
$$
\end{proof}

We will also need the following closure and linearity statements for canonical selectors.

\begin{lemma}[Scalar closure of canonical selectors]\label{lem:selector scalar closure}
If $c\in\mcM$ admits a canonical selector and $\lambda\in\CC$, then $\lambda c$
admits a canonical selector. Moreover,
$$
U_{\lambda c}=\lambda U_c
$$
as eventual sequences.
\end{lemma}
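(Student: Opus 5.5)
The plan is to reduce the claim to the scalar-multiplication statements already available at the two levels beneath the selector: the phase splitting, and the phase-free selector operators (Corollary~\ref{cor:S-linearity}). The case $\lambda=0$ is handled first and separately. There $\lambda c$ is the zero germ, which belongs to $\mcM$. By the remark on the zero germ it is Bromwich-admissible with $\mcB[0]=0$ and $\mcS_\pm[0]=0$, so $U_{0}=0=0\cdot U_c$. From now on assume $\lambda\neq 0$.

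\emph{Step 1: phase split of $\lambda c$.} Write
$$c=\sum_j (A_j+B_je^{i\pi x})M_j$$
in its ordered phase-block form. Multiplying every coefficient by $\lambda\neq0$ gives
$$\lambda c=\sum_j(\lambda A_j+\lambda B_je^{i\pi x})M_j.$$
This is again in ordered phase-block form: the same scales appear, and no block becomes zero, since $(\lambda A_j,\lambda B_j)=(0,0)$ would force $(A_j,B_j)=(0,0)$. By uniqueness in the canonical phase splitting lemma,
$$(\lambda c)^{(+)}=\lambda c^{(+)}, \qquad (\lambda c)^{(-)}=\lambda c^{(-)}.$$
A zero component stays zero, and a nonzero one keeps nonzero coefficients $\lambda C_j$, so both remain in $\mcM^\flat$. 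The canonical continuation of $\lambda c^{(\pm)}$ is then literally $\lambda$ times that of $c^{(\pm)}$, because the continuation is built termwise from the same growth monomials with coefficients $\lambda C_j$.

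\emph{Step 2: admissibility and selectors.} By Lemma~\ref{lem:B-linearity}, $\lambda c^{(\pm)}$ is Bromwich-admissible with $\mcB[\lambda c^{(\pm)}]=\lambda\mcB[c^{(\pm)}]$ almost everywhere. The selector integrands for $\lambda c^{(\pm)}$ are therefore $\lambda$ times those for $c^{(\pm)}$ almost everywhere. Absolute convergence away from $u=0$, and existence of the principal-value limit, pass from the latter to the former. So $\mcS_+[\lambda c^{(+)}](N)$ and $\mcS_-[\lambda c^{(-)}](N)$ exist for all large $N$, and Corollary~\ref{cor:S-linearity} gives $\mcS_\pm[\lambda a]=\lambda\mcS_\pm[a]$. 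Hence $\lambda c$ admits a canonical selector.

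\emph{Step 3: conclusion.} By the definition of the canonical selector,
$$U_{\lambda c}(N)=\lambda\mcS_+[c^{(+)}](N)+(-1)^N\lambda\mcS_-[c^{(-)}](N)=\lambda U_c(N)$$
for all sufficiently large $N$. This is equality as eventual sequences.

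The only point needing care, and the main obstacle, is Step 1. We must check that scaling does not alter the canonical data: the phase split, the reduced expressions, and hence the continuation and the Bromwich transform. This is exactly where uniqueness of the ordered phase-block form and the requirement $\lambda\neq0$ are used; everything else is linearity of integrals.
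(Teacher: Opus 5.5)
Your proof is correct and follows the same route as the paper: you dispose of $\lambda=0$ via the zero selector, then combine $(\lambda c)^{(\pm)}=\lambda c^{(\pm)}$ with Lemma~\ref{lem:B-linearity} and Corollary~\ref{cor:S-linearity} to obtain $U_{\lambda c}=\lambda U_c$. Two of your steps only fill in details the paper leaves implicit: you justify the scaled phase split through uniqueness of the ordered phase-block form, and you check explicitly that the selector integrals for $\lambda c^{(\pm)}$ exist.
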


\begin{proof}
If $\lambda=0$, then $\lambda c=0$, and the zero selector gives the claim. Assume $\lambda\neq0$. Write
$$
c=c^{(+)}+e^{i\pi x}c^{(-)}.
$$
Then
$$
(\lambda c)^{(+)}=\lambda c^{(+)},
\qquad
(\lambda c)^{(-)}=\lambda c^{(-)}.
$$
By Lemma~\ref{lem:B-linearity}, the phase-free germs $\lambda c^{(+)}$ and
$\lambda c^{(-)}$ are Bromwich-admissible, and by Corollary~\ref{cor:S-linearity},
$$
\mcS_+[\lambda c^{(+)}]=\lambda\mcS_+[c^{(+)}],
\qquad
\mcS_-[\lambda c^{(-)}]=\lambda\mcS_-[c^{(-)}].
$$
Therefore
$$
U_{\lambda c}(N)
=
\lambda\mcS_+[c^{(+)}](N)+(-1)^N\lambda\mcS_-[c^{(-)}](N)
=
\lambda U_c(N).
$$
\end{proof}

\begin{lemma}[Linearity of canonical selectors]\label{lem:canonical selector linearity}
If $c,d,c+d\in\mcM$ admit canonical selectors, then
$$
U_{c+d}=U_c+U_d
$$
as eventual sequences.
\end{lemma}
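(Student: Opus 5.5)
The plan is to reduce everything to the two linearity results already in hand, namely Lemma~\ref{lem:B-linearity} for the Bromwich transform and Corollary~\ref{cor:S-linearity} for the selector operators, applied separately to the two phases. The first step is to compute the phase split of $c+d$. Write $c=c^{(+)}+e^{i\pi x}c^{(-)}$ and $d=d^{(+)}+e^{i\pi x}d^{(-)}$. By Corollary~\ref{M finite linear combinations}, the germs $c^{(+)}+d^{(+)}$ and $c^{(-)}+d^{(-)}$ lie in $\mcM^\flat$ after combining equal growth scales and deleting zero coefficients; this is the same argument as Lemma~\ref{Closure under subtraction}, carried out with a single phase. The expression
$$
c+d=\bigl(c^{(+)}+d^{(+)}\bigr)+e^{i\pi x}\bigl(c^{(-)}+d^{(-)}\bigr)
$$
is therefore a decomposition of the required form, and uniqueness in the canonical phase splitting lemma gives $(c+d)^{(\pm)}=c^{(\pm)}+d^{(\pm)}$.

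The next step is to apply linearity phase by phase. By hypothesis, $c^{(\pm)}$, $d^{(\pm)}$ and $(c+d)^{(\pm)}$ are all Bromwich-admissible, and their selector integrals exist for all sufficiently large $N$, because $c$, $d$ and $c+d$ each admit canonical selectors. Corollary~\ref{cor:S-linearity} then yields, for all large $N$,
$$
\mcS_+[(c+d)^{(+)}](N)=\mcS_+[c^{(+)}](N)+\mcS_+[d^{(+)}](N),
$$
together with the analogous identity for $\mcS_-$ applied to the alternating parts. Substituting both identities into the definition
$$
U_{c+d}(N)=\mcS_+[(c+d)^{(+)}](N)+(-1)^N\mcS_-[(c+d)^{(-)}](N)
$$
and regrouping gives $U_{c+d}(N)=U_c(N)+U_d(N)$ for all large $N$. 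This is the claimed equality of eventual sequences.

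The one subtle point is the canonical continuation. Bromwich transforms are computed from the continuation $a(z)$, which is built from the chosen reduced expressions. I therefore need the continuation of $c^{(+)}+d^{(+)}$ to equal the sum of the continuations of $c^{(+)}$ and $d^{(+)}$ on a common right half-plane, and likewise for the alternating parts. This holds because both continuations are defined termwise on the same finite set of growth monomials. Equal growth scales are equal eventual germs, as in Lemma~\ref{Trichotomy of Growth Scales}, and within $\mcL$ such germs carry the same reduced expression. Their continuations therefore coincide, and summing coefficients commutes with continuation. Monomials whose coefficients cancel contribute zero to both sides. A direct alternative is to bypass Lemma~\ref{lem:B-linearity} and argue straight from this termwise identity: $\mcB$ is linear in the integrand $a(z)$, the Bromwich integrals converge absolutely almost everywhere, and the selector integrals converge absolutely away from $u=0$. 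Linearity of the principal-value limit, given that all three limits exist, then finishes the argument.
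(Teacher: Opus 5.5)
Your proposal is correct and follows the paper's proof: you show the phase split is linear, justified via uniqueness of the canonical splitting, and then apply Corollary~\ref{cor:S-linearity} phase by phase. Your extra check that canonical continuation commutes with addition is left implicit in the paper's Lemma~\ref{lem:B-linearity}; it can also be settled directly by the identity theorem, since continuations holomorphic on a common right half-plane that agree on a real ray must coincide.
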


\begin{proof}
The phase split is linear:
$$
(c+d)^{(+)}=c^{(+)}+d^{(+)},
\qquad
(c+d)^{(-)}=c^{(-)}+d^{(-)},
$$
after deleting zero terms. The identity follows from Corollary~\ref{cor:S-linearity}.
\end{proof}

\subsection{$\mcM$-admissibility and the finite part}

Let $R=(R_N)_{N\ge 0}$ be a normalized tail and define
$$
c_N:=R_N-R_{N+1}.
$$

The next definition says when the tail difference is structured enough for the finite part to be defined. Membership in $\mcM$ is only the first requirement. We also require a canonical selector and shift-nonresonance.

\begin{defn}[$\mcM$-admissibility]
We say that $R$ is \emph{$\mcM$-admissible} if either

\begin{enumerate}
\item $c_N=0$ for all sufficiently large $N$

\item or there exists a coefficient germ $c\in\mcM$ such that
$$
c(N)=c_N
$$
for all sufficiently large integers $N$, this germ admits a canonical selector
$U_c$, and $U_c$ is shift-nonresonant.
\end{enumerate}
\end{defn}

By Lemma~\ref{lemma:integer uniqueness}, the coefficient germ $c$ in the second case is unique if it exists. Thus uniqueness is not an extra assumption; it follows from the coefficient class.

\begin{cor}[Representation invariance of the coefficient germ]
Whenever the tail-difference sequence extends to a germ in $\mcM$, that germ is
unique.
\end{cor}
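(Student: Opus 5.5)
The statement to prove is the final corollary: if the tail-difference sequence $c_N=R_N-R_{N+1}$ extends to a germ in $\mcM$, then that germ is unique. The plan is to reduce this directly to Lemma~\ref{lemma:integer uniqueness}, with no new analysis needed.

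First we make precise what ``extends to a germ in $\mcM$'' means: there is some $c\in\mcM$ with $c(N)=c_N$ for all sufficiently large integers $N$. This is the same notion used in the definition of $\mcM$-admissibility.

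Now suppose two germs $c,c'\in\mcM$ both extend the sequence. So there are thresholds $N_1,N_2$ with
$$c(N)=c_N\quad (N\ge N_1),\qquad c'(N)=c_N\quad (N\ge N_2).$$
Then $c(N)=c'(N)$ for all integers $N\ge\max(N_1,N_2)$.

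Lemma~\ref{lemma:integer uniqueness} applies directly and gives $c=c'$ as eventual germs. The germ therefore depends only on the eventual sequence $(c_N)$. That sequence depends only on the eventual sequence $(R_N)$, and not on any presentation or choice of representative of the germ.

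Since $\mcM$ consists of exact finite phase-block sums whose ordered representation is unique, equality as eventual germs also forces equality of the ordered phase-block data. By the canonical phase splitting lemma, it also forces equality of $c^{(\pm)}$. Consequently the canonical continuations, the Bromwich transforms $\mcB[c^{(\pm)}]$, and the selector $U_c$ are unambiguous whenever they exist.

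The only potential subtlety is the meaning of ``representation invariance.'' It could refer to different reduced expressions giving the same function, not just to different germs. The point to check is that Lemma~\ref{lemma:integer uniqueness}, via subtraction closure and the trichotomy of growth scales, identifies germs rather than formal expressions. Equal growth scales are equal as eventual germs, so combining terms in Lemma~\ref{Closure under subtraction} handles this.

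No further obstacle is expected. The corollary is essentially a restatement of Lemma~\ref{lemma:integer uniqueness} applied to the tail differences.
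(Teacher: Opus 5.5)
Your proof is correct and is exactly the paper's argument. Two extensions $c,c'\in\mcM$ of the tail-difference sequence agree at all sufficiently large integers, so Lemma~\ref{lemma:integer uniqueness} gives $c=c'$ as eventual germs. Your further remarks on the phase split, canonical continuation, and selector are not needed here; they correspond to the paper's next corollary, on representation invariance of the selector.
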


\begin{proof}
This is exactly Lemma~\ref{lemma:integer uniqueness}.
\end{proof}

\begin{cor}[Representation invariance of the selector]
Whenever the tail-difference sequence extends to a germ in $\mcM$ admitting a
canonical selector, that selector is uniquely determined by the eventual sequence
$c_N$.
\end{cor}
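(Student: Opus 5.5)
The statement to prove is the last corollary: whenever the tail-difference sequence $c_N=R_N-R_{N+1}$ extends to a germ in $\mcM$ that admits a canonical selector, the selector $U_c$ depends only on the eventual sequence $(c_N)$. The argument has two steps. First I pin down the germ $c$. Then I observe that every ingredient in the construction of $U_c$ is a function of $c$ as a germ, with no auxiliary choices.

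\textbf{Step 1.} Suppose $c,\tilde c\in\mcM$ both satisfy $c(N)=c_N=\tilde c(N)$ for all sufficiently large integers $N$. Lemma~\ref{lemma:integer uniqueness} then gives $c=\tilde c$ as eventual germs. Closure under subtraction (Lemma~\ref{Closure under subtraction}) is already built into that lemma. So the coefficient germ is determined by the eventual sequence.

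\textbf{Step 2.} I trace the construction of $U_c$ back to $c$.

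First, the phase split $c=c^{(+)}+e^{i\pi x}c^{(-)}$ is unique by the canonical phase splitting lemma. That lemma rests on the uniqueness of the ordered phase-block form, which in turn follows from the trichotomy of Lemma~\ref{Trichotomy of Growth Scales}. Hence $c^{(\pm)}\in\mcM^\flat$ are determined by $c$.

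Second, each $c^{(\pm)}$ is an exact ordered sum $\sum_j C_jM_j$ with distinct growth monomials $M_j$. This decomposition is unique, again by trichotomy: one combines equal scales and deletes zero coefficients. Each $M_j$ carries its chosen reduced expression, so the canonical continuation $c^{(\pm)}(z)$ is determined.

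Third, the Bromwich transform $\mcB[c^{(\pm)}]$ is defined by a contour integral. It is independent of $\sigma>\sigma_0$ by clause (2) of Bromwich-admissibility, and it is determined almost everywhere.

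Fourth, the selector integrals $\mcS_+$ and $\mcS_-$ are integrals of $\mcB[c^{(\pm)}]$ against fixed kernels. Changing $\mcB$ on a null set does not change them. So $U_c(N)=\mcS_+[c^{(+)}](N)+(-1)^N\mcS_-[c^{(-)}](N)$ is determined for all large $N$, that is, as an eventual sequence.

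\textbf{Main obstacle.} The delicate point is the continuation step. A growth monomial could in principle admit two different reduced expressions that agree as real germs, and these might continue differently. I would argue that the paper treats the reduced expression as part of the data of an element of $\mcL$, so equality of growth monomials as elements of the class fixes the expression. I would also note that analytic continuation from a real half-line to a right half-plane is unique whenever the continuations are holomorphic there, by the identity theorem. Either of these observations removes the concern, and the second one does so intrinsically.
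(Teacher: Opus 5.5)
Your proof is correct and follows the same chain as the paper's: uniqueness of the germ via Lemma~\ref{lemma:integer uniqueness}, uniqueness of the phase split, determination of the canonical continuation, $\sigma$-independence of the Bromwich transform, and the explicit selector formulas. Your identity-theorem remark tightens the continuation step, which the paper settles only by appeal to the chosen reduced expressions in $\mcL$, and your observation that the selector integrals are unaffected by changing $\mcB$ on a null set fills in a detail the paper leaves implicit.
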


\begin{proof}
The germ in $\mcM$ is unique, its phase split is unique, the canonical
continuation is determined by the chosen reduced expressions in $\mcL$, the
Bromwich transform is canonical, and the selector formulas are explicit.
\end{proof}

We can now define the finite part. If the tail difference is eventually zero, the tail is eventually constant. Otherwise, we subtract the canonical selector. The selector has the same first difference as the tail, so the difference between the tail and the selector is eventually constant.

\begin{defn}[Finite part]
Let $R$ be $\mcM$-admissible. If $c_N=0$ eventually, define $\FP(R)$ to be the
eventual constant value of $R_N$. Otherwise let $c\in\mcM$ be the unique
tail-difference germ, and define $\FP(R)$ to be the eventual constant value of
$$
R_N-U_c(N).
$$
\end{defn}

\begin{prop}[Well-definedness of $\FP$]
The quantity $\FP(R)$ is well defined.
\end{prop}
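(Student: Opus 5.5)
The plan is to split into the two cases of the definition of $\mcM$-admissibility and, in each case, check two things. First, the sequence whose eventual value is taken really is eventually constant. Second, that constant depends only on the eventual sequence $R$, not on any auxiliary choices.

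In the first case, $c_N=R_N-R_{N+1}=0$ for all $N\ge N_1$. Then $R_N=R_{N_1}$ for all $N\ge N_1$, so $R$ is eventually constant and $\FP(R)$ is this common value. Changing finitely many terms of $R$ changes neither the eventual vanishing of $c_N$ nor the eventual value, so $\FP(R)$ depends only on the eventual sequence.

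One consistency point must be settled: the two cases should not both apply with different answers. If $c_N=0$ eventually, then the zero germ represents $c_N$. By Lemma~\ref{lemma:integer uniqueness} it is the only germ in $\mcM$ doing so. Its selector is $U_0=0$, by the remark that $\mcS_\pm[0]=0$. Hence the second-case recipe would also return the eventual value of $R_N$, and the two cases agree.

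In the second case, there is a germ $c\in\mcM$ with $c(N)=c_N$ for all large $N$. By Lemma~\ref{lemma:integer uniqueness}, and its corollary on representation invariance of the coefficient germ, $c$ is uniquely determined by the eventual sequence $(c_N)$. By the corollary on representation invariance of the selector, $U_c$ is then uniquely determined as an eventual sequence. This is because the phase split is unique, the canonical continuation comes from the fixed reduced expressions, and the Bromwich transform and selector integrals are explicit.

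Next, set $D_N:=R_N-U_c(N)$. By Proposition~\ref{prop:Uc identity}, for all sufficiently large $N$,
$$
D_N-D_{N+1}=(R_N-R_{N+1})-\bigl(U_c(N)-U_c(N+1)\bigr)=c_N-c(N)=0 .
$$
So $D_N$ is eventually constant, and its eventual value is independent of the threshold chosen.

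The main obstacle is making sure that \emph{no hidden choices} remain in $U_c$. In particular, the Bromwich transform must not depend on the abscissa $\sigma$, and this is exactly condition (2) of Bromwich-admissibility. Also, $U_c$ is only defined for large $N$, so I would argue entirely at the level of eventual sequences. With these checked, $\FP(R)$ is a single well-defined complex number.

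Shift-nonresonance is not needed for well-definedness. It enters only later, when $\FP$ is compared with ordinary limits via Lemma~\ref{lem:shift nonresonant goes to zero}.
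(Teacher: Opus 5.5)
Your proof is correct and follows the paper's argument. The paper likewise treats the eventually-zero case as immediate, and otherwise uses Proposition~\ref{prop:Uc identity} to show that $R_N-U_c(N)$ is eventually constant. Your extra checks (uniqueness of $c$ and $U_c$, and agreement of the two cases) are harmless but not needed here: the paper handles them through the preceding representation-invariance corollaries and through the ``otherwise'' clause in the definition of $\FP$.
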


\begin{proof}
If $c_N=0$ eventually, then $R_N$ is eventually constant, so the claim is
immediate.

Otherwise, Proposition~\ref{prop:Uc identity} gives
$$
R_N-R_{N+1}=U_c(N)-U_c(N+1)
$$
for all sufficiently large $N$. Hence
$$
(R_N-U_c(N))-(R_{N+1}-U_c(N+1))=0
$$
for all sufficiently large $N$, so $R_N-U_c(N)$ is eventually constant.
\end{proof}

\begin{cor}[Eventual invariance]\label{cor:eventual invariance}
If $R$ and $\widetilde R$ agree for all sufficiently large $N$, and $R$ is
$\mcM$-admissible, then $\widetilde R$ is $\mcM$-admissible and
$$
\FP(\widetilde R)=\FP(R).
$$
\end{cor}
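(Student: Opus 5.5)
The plan is to reduce everything to the fact that $\mcM$-admissibility and $\FP$ depend only on the tail-difference sequence $c_N=R_N-R_{N+1}$ for large $N$, together with the eventual constant $R_N-U_c(N)$, so that changing finitely many initial terms of $R$ leaves both untouched.

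\emph{Admissibility of $\widetilde R$.} Suppose $R_N=\widetilde R_N$ for all $N\ge N_1$. The tail differences then satisfy $\widetilde c_N=\widetilde R_N-\widetilde R_{N+1}=R_N-R_{N+1}=c_N$ for $N\ge N_1$. We split according to the two alternatives in the definition of $\mcM$-admissibility. If $c_N=0$ eventually, then $\widetilde c_N=0$ eventually, so $\widetilde R$ is admissible by the first alternative. Otherwise there is a germ $c\in\mcM$ with $c(N)=c_N$ for large $N$. Then also $c(N)=\widetilde c_N$ for large $N$, because both conditions concern only sufficiently large integers. The germ $c$ is the same object, so it still admits the canonical selector $U_c$, and $U_c$ is still shift-nonresonant, since these are properties of $c$ and $U_c$ alone and do not involve $R$. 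Hence $\widetilde R$ is admissible by the second alternative. By Lemma~\ref{lemma:integer uniqueness}, $c$ is also \emph{the} unique tail-difference germ of $\widetilde R$, so the selector used for $\widetilde R$ is exactly $U_c$, as guaranteed by the corollary on representation invariance of the selector.

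\emph{Equality of finite parts.} In the eventually-zero case, $R_N$ and $\widetilde R_N$ share the same eventual constant value, because they coincide for $N\ge N_1$. In the other case, the proposition on well-definedness of $\FP$ shows that $R_N-U_c(N)$ is constant for $N\ge N_2$, with value $\FP(R)$. Likewise $\widetilde R_N-U_c(N)$ is constant for large $N$, with value $\FP(\widetilde R)$. For $N\ge\max(N_1,N_2)$ these two expressions coincide term by term, so the two eventual constants are equal.

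There is no real obstacle here. The only point needing care is confirming that the selector attached to $\widetilde R$ is the same $U_c$ and not some other representation. This follows from the uniqueness of the germ (Lemma~\ref{lemma:integer uniqueness}) together with the fact that $U_c$ is determined by $c$ alone.
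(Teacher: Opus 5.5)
Your proposal is correct and follows essentially the same route as the paper: the tail differences of $R$ and $\widetilde R$ agree eventually, so the same germ $c$ and the same selector $U_c$ serve both tails, and $R_N-U_c(N)$ and $\widetilde R_N-U_c(N)$ coincide for large $N$. Your separate treatment of the eventually-zero case and your explicit check that $\widetilde R$ is admissible spell out steps that the paper's short proof leaves implicit.
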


\begin{proof}
Since $R_N=\widetilde R_N$ for all sufficiently large $N$, the tail-difference
sequences also agree for all sufficiently large $N$. Hence the same
tail-difference germ and the same canonical selector are used in the definition
of the finite part. Therefore $R_N-U_c(N)$ and $\widetilde R_N-U_c(N)$ have the
same eventual constant value.
\end{proof}

The following basic properties explain how the finite part behaves under the operations needed later. We first treat constant shifts.

\begin{cor}\label{constant shift}
For any constant $\lambda\in\CC$ and any $\mcM$-admissible tail $R$, the tail
$R+\lambda$ is $\mcM$-admissible and
$$
\FP(R+\lambda)=\FP(R)+\lambda.
$$
\end{cor}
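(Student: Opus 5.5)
The plan is to observe that adding a constant does not change the tail-difference sequence, so everything in the definition of $\FP$ that depends only on differences is shared by $R$ and $R+\lambda$. Write $\widetilde R_N:=R_N+\lambda$. Then
$$
\widetilde c_N:=\widetilde R_N-\widetilde R_{N+1}=R_N-R_{N+1}=c_N
$$
for every $N$. I would check admissibility and compute the finite part case by case, according to the two alternatives in the definition of $\mcM$-admissibility.

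In the first case, $c_N=0$ eventually. Then $\widetilde c_N=0$ eventually, so $\widetilde R$ is $\mcM$-admissible by the first alternative. Moreover, $\widetilde R_N=R_N+\lambda$ is eventually constant with value $\FP(R)+\lambda$, which gives the claim directly from the definition of $\FP$.

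In the second case, let $c\in\mcM$ be the germ with $c(N)=c_N$ eventually. The same germ interpolates $\widetilde c_N$, and by Lemma~\ref{lemma:integer uniqueness} it is the unique such germ. It is therefore the tail-difference germ of $\widetilde R$ as well. The conditions that $c$ admits a canonical selector $U_c$ and that $U_c$ is shift-nonresonant depend only on $c$, not on $R$, so they carry over unchanged. This shows $\widetilde R$ is $\mcM$-admissible, and by the representation-invariance corollary for the selector, the same $U_c$ is used for both tails. Consequently
$$
\widetilde R_N-U_c(N)=\bigl(R_N-U_c(N)\bigr)+\lambda,
$$
and taking eventual constant values gives $\FP(\widetilde R)=\FP(R)+\lambda$.

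The only point needing care is the second case. One must confirm that nothing in the admissibility definition or in the construction of $U_c$ refers to $R$ itself rather than to its differences. This holds because the selector is built from $c$ alone, via its phase split, the Bromwich transform and the selector integrals, and uniqueness of $c$ (Lemma~\ref{lemma:integer uniqueness}) rules out choosing a different germ for the shifted tail. No analytic estimates are required.
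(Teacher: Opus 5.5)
Your proof is correct and follows the paper's argument: the paper also separates the eventually-zero-difference case from the case of a nonzero germ, notes that adding $\lambda$ leaves $c_N$ (and hence the germ $c$ and the selector $U_c$) unchanged, and reads off the eventual constant of $(R_N-U_c(N))+\lambda$. Your appeals to uniqueness of the germ, and to the fact that the selector and shift-nonresonance conditions depend only on $c$, just spell out what the paper compresses into the phrase ``the associated germ and canonical selector are unchanged.''
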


\begin{proof}
If $c_N=0$ eventually, then $R_N$ is eventually constant, and the claim is immediate.

Otherwise, adding a constant does not change the tail-difference sequence:
$$
(R_N+\lambda)-(R_{N+1}+\lambda)=R_N-R_{N+1}.
$$
Hence the associated germ and canonical selector are unchanged. In particular,
$R+\lambda$ is $\mcM$-admissible. Moreover
$$
(R_N+\lambda)-U_c(N)=(R_N-U_c(N))+\lambda
$$
eventually, and the eventual constant increases by $\lambda$.
\end{proof}

\begin{cor}\label{constant scaling}
If $R$ is $\mcM$-admissible and $\lambda\in\CC$, then $\lambda R$ is
$\mcM$-admissible and
$$
\FP(\lambda R)=\lambda\,\FP(R).
$$
\end{cor}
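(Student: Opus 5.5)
We split on the two cases in the definition of $\mcM$-admissibility for $R$, with $c_N=R_N-R_{N+1}$. Throughout, $\lambda\in\CC$ is fixed, and $\lambda=0$ is handled first: then $\lambda R$ is the zero sequence, which is admissible by case (1) with $\FP(0)=0=\lambda\FP(R)$. So assume $\lambda\neq 0$ from now on.

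\emph{Case (1).} If $c_N=0$ eventually, then the tail differences of $\lambda R$ are $\lambda c_N$, which also vanish eventually. Hence $\lambda R$ is admissible by case (1). Its eventual constant value is $\lambda$ times the eventual constant value of $R_N$, so $\FP(\lambda R)=\lambda\FP(R)$.

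\emph{Case (2).} Let $c\in\mcM$ be the tail-difference germ of $R$, with canonical selector $U_c$, and assume $U_c$ is shift-nonresonant. We check the three requirements for $\lambda R$ in turn.

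\begin{enumerate}
\item The tail differences of $\lambda R$ are $(\lambda R)_N-(\lambda R)_{N+1}=\lambda c_N=(\lambda c)(N)$ for all large $N$. By Corollary~\ref{M finite linear combinations}, $\lambda c\in\mcM$.
\item By Lemma~\ref{lem:selector scalar closure}, $\lambda c$ admits a canonical selector, and $U_{\lambda c}=\lambda U_c$ as eventual sequences.
\item Shift-nonresonance is preserved by nonzero scaling. If $U_c(N)\to 0$, then $\lambda U_c(N)\to 0$. If instead $|U_c(N)-U_c(N+1)|\ge\varepsilon|U_c(N)|$ for $N\ge N_0$, multiplying both sides by $|\lambda|$ gives the same inequality for $\lambda U_c$, with the same $\varepsilon$ and $N_0$.
\end{enumerate}

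So $\lambda R$ is $\mcM$-admissible in case (2). By Lemma~\ref{lemma:integer uniqueness}, $\lambda c$ is the unique tail-difference germ of $\lambda R$, so the selector used to define $\FP(\lambda R)$ is exactly $U_{\lambda c}=\lambda U_c$. Therefore
$$\lambda R_N-U_{\lambda c}(N)=\lambda\bigl(R_N-U_c(N)\bigr)$$
eventually. Its eventual constant value is $\lambda\,\FP(R)$, which gives $\FP(\lambda R)=\lambda\FP(R)$.

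The main point needing care is that the selector for $\lambda R$ is exactly $\lambda U_c$. This is what lets the eventual constants scale. It depends on uniqueness of the tail-difference germ, on uniqueness of the phase split, and on Lemma~\ref{lem:selector scalar closure}, which in turn rests on scalar linearity of $\mcB$ and $\mcS_\pm$ (Lemma~\ref{lem:B-linearity} and Corollary~\ref{cor:S-linearity}). Scalar multiplication preserves absolute convergence and the existence of the principal values, so no existence hypotheses are lost in that step.
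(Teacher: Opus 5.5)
Your proposal is correct and follows the paper's proof essentially step for step. You dispose of $\lambda=0$ and the eventually-constant case, use Lemma~\ref{lem:selector scalar closure} to get $U_{\lambda c}=\lambda U_c$, note that shift-nonresonance survives scaling, and read off the eventual constant of $\lambda\bigl(R_N-U_c(N)\bigr)$; your explicit checks that $\lambda c\in\mcM$ and that the selector for $\lambda R$ is the one attached to the unique germ $\lambda c$ just spell out what the paper leaves implicit.
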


\begin{proof}
If $\lambda=0$, then $\lambda R$ is the zero tail and the claim is immediate.

If $c_N=0$ eventually, then $R_N$ is eventually constant, so $\lambda R_N$ is
eventually constant and the claim follows.

Otherwise, the tail-difference germ changes from $c$ to $\lambda c$. By
Lemma~\ref{lem:selector scalar closure}, the germ $\lambda c$ admits the canonical
selector
$$
U_{\lambda c}=\lambda U_c.
$$
The selector $\lambda U_c$ is shift-nonresonant whenever $U_c$ is. Hence
$\lambda R$ is $\mcM$-admissible. Finally,
$$
\lambda R_N-\lambda U_c(N)=\lambda(R_N-U_c(N))
$$
eventually, and taking eventual constants gives the result.
\end{proof}

The additivity statement is not a closure theorem. It applies only when the two tails and their sum are already known to be admissible.

\begin{cor}\label{additivity}
Suppose $X$, $Y$, and $X+Y$ are $\mcM$-admissible normalized tails. Then
$$
\FP(X+Y)=\FP(X)+\FP(Y).
$$
\end{cor}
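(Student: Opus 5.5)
The plan is to reduce additivity to the uniqueness of the tail-difference germ together with the linearity of canonical selectors. Write $c^X_N:=X_N-X_{N+1}$ and $c^Y_N:=Y_N-Y_{N+1}$. The tail difference of $X+Y$ is then $c^{X+Y}_N=c^X_N+c^Y_N$. I will first dispose of the degenerate cases, where one or more of these difference sequences vanishes eventually, and then treat the generic case.

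\emph{Degenerate cases.} Suppose $c^X_N=0$ eventually. Then $X_N$ is eventually equal to the constant $\FP(X)$. Hence $X+Y$ agrees eventually with $Y+\FP(X)$. Corollary~\ref{cor:eventual invariance} and Corollary~\ref{constant shift} then give
\[
\FP(X+Y)=\FP(Y+\FP(X))=\FP(Y)+\FP(X).
\]
The case $c^Y_N=0$ eventually is symmetric. If both differences are nonzero but $c^{X+Y}_N=0$ eventually, then $X+Y$ is eventually constant. Here I cannot simply invoke selector linearity, so I handle it directly: $c^Y=-c^X$ as germs by Lemma~\ref{lemma:integer uniqueness}. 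Lemma~\ref{lem:selector scalar closure} then gives $U_{c^Y}=-U_{c^X}$ eventually. Therefore
\[
X_N+Y_N=(X_N-U_{c^X}(N))+(Y_N-U_{c^Y}(N)),
\]
which is eventually $\FP(X)+\FP(Y)$, as required.

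\emph{Generic case.} Now suppose none of the three difference sequences vanishes eventually. Admissibility supplies germs $c^X,c^Y,c^{X+Y}\in\mcM$, each with a canonical selector. By Corollary~\ref{M finite linear combinations}, $c^X+c^Y\in\mcM$, and it agrees with $c^{X+Y}_N$ at all large integers. Lemma~\ref{lemma:integer uniqueness} then forces $c^{X+Y}=c^X+c^Y$ as germs. So the three germs $c^X$, $c^Y$ and $c^X+c^Y$ all admit canonical selectors, and Lemma~\ref{lem:canonical selector linearity} gives $U_{c^{X+Y}}=U_{c^X}+U_{c^Y}$ as eventual sequences. Consequently
\[
(X_N+Y_N)-U_{c^{X+Y}}(N)=\bigl(X_N-U_{c^X}(N)\bigr)+\bigl(Y_N-U_{c^Y}(N)\bigr)
\]
eventually. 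The eventual constants add, which yields $\FP(X+Y)=\FP(X)+\FP(Y)$.

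\emph{Main obstacle.} The delicate point is the application of Lemma~\ref{lem:canonical selector linearity}. Corollary~\ref{cor:S-linearity} requires that all the selector integrals involved exist and that the phase split is linear. I will check that the phase split of $c^X+c^Y$ is the sum of the phase splits: this holds because the split is unique and the sum of the splits is a valid decomposition. I will also check that the Bromwich transforms agree almost everywhere, which holds by Lemma~\ref{lem:B-linearity}. Both steps use only the stated hypothesis that each of $X$, $Y$ and $X+Y$ is itself admissible. Shift-nonresonance is not needed anywhere in the argument.
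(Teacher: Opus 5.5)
Your proposal is correct and follows essentially the same route as the paper. Both dispose of the eventually constant cases, then use uniqueness in $\mcM$ to get $c_{X+Y}=c_X+c_Y$ and apply linearity of canonical selectors so that the eventual constants add; the only deviation is that when $X+Y$ is eventually constant you argue directly via germ uniqueness and scalar closure of selectors, whereas the paper writes $Y=\eta-X$ eventually and applies the constant-shift and constant-scaling corollaries, which rest on that same scalar-closure lemma.
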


\begin{proof}
If one of $X$ or $Y$ is eventually constant, the claim follows from Corollary~\ref{constant shift}. If $X+Y$ is eventually constant, say with eventual value $\eta$, then $Y=\eta-X$ eventually, and the claim follows from Corollary~\ref{constant shift} and Corollary~\ref{constant scaling}.

Otherwise let $c_X,c_Y,c_{X+Y}$ be the tail-difference germs attached to $X,Y,X+Y$. Since
$$
(X_N+Y_N)-(X_{N+1}+Y_{N+1})=(X_N-X_{N+1})+(Y_N-Y_{N+1}),
$$
we have
$$
c_{X+Y}(N)=c_X(N)+c_Y(N)
$$
for all sufficiently large $N$. By Corollary~\ref{M finite linear combinations}, $c_X+c_Y\in\mcM$, and by uniqueness in $\mcM$,
$$
c_{X+Y}=c_X+c_Y.
$$
By Lemma~\ref{lem:canonical selector linearity},
$$
U_{X+Y}=U_X+U_Y
$$
eventually. By eventual invariance of the finite part, we may compute using this eventual representative. Therefore
$$
(X_N+Y_N)-U_{X+Y}(N)
=
(X_N-U_X(N))+(Y_N-U_Y(N))
$$
eventually, and taking eventual constants gives the result.
\end{proof}

Finally, the finite part agrees with the ordinary limit whenever the normalized tail actually converges. This is the basic regularity property of the finite-part extraction.

\begin{cor}\label{agreement with limits}
If $R$ is $\mcM$-admissible and
$$
R_N\to L,
$$
then
$$
\FP(R)=L.
$$
\end{cor}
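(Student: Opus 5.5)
Two cases, following the two clauses of $\mcM$-admissibility. If $c_N=R_N-R_{N+1}=0$ eventually, then $R_N$ is eventually constant. Since $R_N\to L$, that eventual constant must be $L$, so $\FP(R)=L$ directly from the definition of the finite part.

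Otherwise there is a unique germ $c\in\mcM$ with $c(N)=c_N$ eventually, together with its canonical selector $U_c$, and $U_c$ is shift-nonresonant. By Proposition~\ref{prop:Uc identity},
$$
(\Delta U_c)_N=U_c(N)-U_c(N+1)=c(N)=R_N-R_{N+1}
$$
for all sufficiently large $N$. Because $R_N\to L$, the right-hand side tends to $0$, so $(\Delta U_c)_N\to 0$. Lemma~\ref{lem:shift nonresonant goes to zero}, applied to the shift-nonresonant sequence $U_c$, then gives $U_c(N)\to 0$.

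To conclude, recall from the well-definedness proposition that $R_N-U_c(N)$ is eventually equal to the constant $\FP(R)$. Letting $N\to\infty$ and using $R_N\to L$ and $U_c(N)\to0$, that constant equals $L-0=L$. Hence $\FP(R)=L$.

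Shift-nonresonance is exactly what rules out a selector with vanishing differences but a nonzero limit, such as $U_c$ tending to a nonzero constant. This is the obstacle the argument must handle, and Lemma~\ref{lem:shift nonresonant goes to zero} disposes of it, so no further estimates are needed.
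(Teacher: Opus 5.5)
Your proof is correct and follows the paper's own argument essentially step for step. The eventually-constant case is immediate. Otherwise, the selector identity gives $(\Delta U_c)_N = R_N - R_{N+1} \to 0$, so shift-nonresonance via Lemma~\ref{lem:shift nonresonant goes to zero} forces $U_c(N)\to 0$, and the eventual constant $R_N - U_c(N)$ must then equal $L$.
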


\begin{proof}
If $c_N=0$ eventually, then $R_N$ is eventually constant and the claim is
immediate.

Otherwise
$$
U_c(N)-U_c(N+1)=c(N)=R_N-R_{N+1}\to 0
$$
and $U_c$ is shift-nonresonant by admissibility. Hence, by
Lemma~\ref{lem:shift nonresonant goes to zero},
$$
U_c(N)\to 0.
$$
Since $R_N-U_c(N)$ is eventually constant, its eventual constant value equals
its limit:
$$
\FP(R)=\lim_{N\to\infty}(R_N-U_c(N))=L.
$$
\end{proof}

\begin{rem}
    The present paper uses ordinary Bromwich transforms, so the transform $$ \mcB[a](u) = \frac{1}{2\pi i}\int_{\sigma-i\infty}^{\sigma+i\infty}a(z)e^{-uz}\,dz $$
    is required to exist as an ordinary function. There is also a natural distributional extension of the definition. For instance, one would formally have
    $$
    \mcB[e^{\lambda x}]=\delta_\lambda.
    $$
    We do not develop this extension here, since the ordinary Bromwich-admissible class already covers the main divergent examples considered in this paper.
\end{rem}

\begin{rem}
    The series treated here can also be easily extended to phases $e^{i\theta N}$ where $\theta \in [0,2\pi)$. Then, the selector operators become $$S_\theta[a](N) := PV \int_{-\infty}^\infty \frac{\mcB[a](u) e^{Nu}}{1-e^{i\theta}e^{u}} du$$  Indeed, if
    $U_N=e^{i\theta N}\mcS_\theta[a](N)$ then formally $U_N-U_{N+1}=e^{i\theta N}a(N)$.
\end{rem}

\section{C-Summation} \label{C-summation}
\begin{defn} [C-Summation]
    If there exists some solution ($I_n$) of the recurrence $a(n)I_n +b(n)I_{n+1} = \mu_n$ whose tail $R = (R_N)_{N\ge 0}$ is $\mathcal{M}$ admissible, then the associated series $$\sum_{k=0}^\infty (-1)^k{\mu_{n_0+k}} \prod_{j=0}^{k-1} \frac{b(n_0+j)}{a(n_0+j+1)}$$ is $C$-Summable to $$S={a(n_0)I_{n_0}} - \text{FP}_\infty (R)$$ 
\end{defn}

The definition above is motivated through the continued fraction method described in section $\ref{continued fractions representing certain integrals}$ and the discussion of the finite part in $\ref{finite term}$. The term $a(n_0)I_{n_0}$ records the value obtained from a chosen solution of the recurrence, while the subtraction of the finite part removes the contribution of the homogeneous part which survives in the tail. In this sense, $C$-Summation should be viewed as a renormalized initial value. 

The first question is whether the construction depends on which $I_n$ is used. Since $\ref{the reccurence ambiguity}$ showed that any two solutions differ by a homogeneous term, this is one of the most important properties for the method to posses. The role of the finite part is what absorbs this change. If the tail $R_N$ shifts by a constant, the initial value shifts by the same constant in the opposite direction, so the corrected quantity remains unchanged. The following result makes this precise. 

\begin{lemma}
    $C$-summation does not depend on the specific choice of admissible solution $I_n$.
\end{lemma}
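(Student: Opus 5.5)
Take two solutions $(I_n)$ and $(J_n)$ of $a(n)I_n+b(n)I_{n+1}=\mu_n$, each with an $\mathcal M$-admissible normalized tail. Call these tails $R$ and $\widetilde R$. The plan is to show that $a(n_0)J_{n_0}-\FP(\widetilde R)=a(n_0)I_{n_0}-\FP(R)$.

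The first step uses the computation of Section~\ref{the reccurence ambiguity}. It gives
$$J_n=I_n+(J_{n_0}-I_{n_0})P(n,n_0)$$
for all $n\ge n_0$, with $P(n_0,n_0)=1$ as the empty product. Divide by $P(n_0+N,n_0)$, which is nonzero since $a(k),b(k)\neq 0$ for $k\ge n_0$, and multiply by $a(n_0)$. This yields, for every $N\ge 0$,
$$\widetilde R_N=R_N+\lambda,\qquad \lambda:=a(n_0)\bigl(J_{n_0}-I_{n_0}\bigr).$$
So the homogeneous ambiguity becomes an exact constant shift of the normalized tail, as advertised in Section~\ref{finite term}.

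The second step applies Corollary~\ref{constant shift}. Since $R$ is $\mathcal M$-admissible, $R+\lambda=\widetilde R$ is $\mathcal M$-admissible and $\FP(\widetilde R)=\FP(R)+\lambda$. Then
$$a(n_0)J_{n_0}-\FP(\widetilde R)=a(n_0)I_{n_0}+\lambda-\FP(R)-\lambda=a(n_0)I_{n_0}-\FP(R),$$
which is the claim.

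This argument also shows something slightly stronger. Admissibility of one solution's tail forces admissibility of every solution's tail, because the tail-difference sequences $c_N$ coincide and hence so do the germ in $\mathcal M$ and the canonical selector. So the hypothesis "some solution is admissible" in the definition of $C$-summation is harmless.

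The step needing most care is a consistency point rather than a real obstacle. The definition of $\FP$ branches on whether $c_N$ is eventually zero, and one must check that both tails fall in the same branch. This holds because $\widetilde R$ and $R$ have identical differences, and Corollary~\ref{constant shift} already covers both branches. One should also note that the shift identity holds for all $N\ge 0$, not just eventually; in any case Corollary~\ref{cor:eventual invariance} makes only eventual agreement matter.
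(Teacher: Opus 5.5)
Your proposal is correct and follows the paper's own argument: write $J_n=I_n+C\,P(n,n_0)$, note that the normalized tails then differ by the constant $a(n_0)C$, and absorb this shift using Corollary~\ref{constant shift}. Your extra observation, that admissibility of one solution's tail forces admissibility of every solution's tail, is also correct and follows from the same corollary.
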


\begin{proof}
    Suppose $I_n$ and $J_n$ both satisfy the recurrence and have $\mcM$ admissible tails. Then,  $J_n = I_n + CP(n,n_0)$, for some constant $C$. Compute $R_N^I$ and $R_N^J$: $$R_N^I = a(n_0)\frac{I_{n_0+N}}{P(n_0+N,n_0)}$$ $$R_N^J = a(n_0)\frac{J_{n_0+N}}{P(n_0 +N,n_0)} = a({n_0})\frac{I_{n_0+N} + CP(n_0+N,n_0)}{P(n_0+N,n_0)} = R_N^I + a(n_0) C$$

    By \ref{constant shift}, $\text{FP}_\infty (R^{J}) = \text{FP}_\infty(R^I + a(n_0)C) = \text{FP}_\infty(R^I) + a(n_0)C$. Then, \begin{align*} a(n_0)J_{n_0} - \text{FP}_\infty(R^J) &= a(n_0) (I_{n_0} + CP(n_0,n_0))- \text{FP}_\infty(R^I + a(n_0) C) \\ &= a(n_0)I_{n_0} + a(n_0)C - \text{FP}_\infty(R^I) - a(n_0)C \\ &= a(n_0)I_{n_0} - \text{FP}_\infty(R^I)\end{align*}
\end{proof}
The point of the lemma is that the ambiguity in the solution and the ambiguity in the normalized tail are the same ambiguity written in two different ways. Once the finite part is subtracted, the dependence on the chosen solution disappears. The next step is to record a simple identity relating the chosen initial value, the partial sums of the associated formal series, and the normalized tail. This identity will be used repeatedly in the rest of the section.
\begin{prop} \label{partial sums}
    Suppose $$\mathcal{S} = \sum_{k=0}^\infty (-1)^k{\mu_{n_0+k}} \prod_{j=0}^{k-1} \frac{b(n_0+j)}{a(n_0+j+1)}$$ is a formal series with associated sequence $I_n$. Then, for every $N\ge1$, we have that $a(n_0)I_{n_0} = S_N + R_N$, where $S_N$ is the $N$th partial sum of the series and $R_N$ is the normalized tail. 
\end{prop}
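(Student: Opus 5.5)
The plan is to prove the identity by a one-step telescoping relation for the normalized tail and then an induction on $N$. I take $S_N=\sum_{k=0}^{N-1}c_k$, where $c_k=(-1)^k\mu_{n_0+k}\prod_{j=0}^{k-1}\frac{b(n_0+j)}{a(n_0+j+1)}$ is the $k$th term of $\mathcal S$ (the series of the main theorem, i.e. $\mu_{n_0}$ times the expansion of Theorem~\ref{formal series}). I use the standing assumption of Section~\ref{finite term} that $a(n),b(n)\neq0$ for $n\ge n_0$, so every $P(n,n_0)$ is nonzero.

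\emph{Base case.} The empty product gives $P(n_0,n_0)=1$, so $R_0=a(n_0)I_{n_0}$. This is the identity with $S_0=0$. Hence it suffices to prove, for every $N\ge0$,
$$
R_N-R_{N+1}=c_N,
$$
and then sum from $0$ to $N-1$.

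\emph{Key step: the one-step relation.}
\begin{enumerate}
\item Solve the recurrence at index $n_0+N$ for $I_{n_0+N}$:
$$
I_{n_0+N}=\frac{\mu_{n_0+N}-b(n_0+N)I_{n_0+N+1}}{a(n_0+N)}.
$$
\item Substitute this into $R_N=a(n_0)I_{n_0+N}/P(n_0+N,n_0)$, which splits $R_N$ into two terms.
\item For the $I_{n_0+N+1}$ term, use
$$
P(n_0+N+1,n_0)=P(n_0+N,n_0)\cdot\left(-\frac{a(n_0+N)}{b(n_0+N)}\right).
$$
This gives
$$
-\frac{a(n_0)\,b(n_0+N)\,I_{n_0+N+1}}{a(n_0+N)\,P(n_0+N,n_0)}=\frac{a(n_0)I_{n_0+N+1}}{P(n_0+N+1,n_0)}=R_{N+1}.
$$
\item It then remains to show that the $\mu$ term, $a(n_0)\mu_{n_0+N}/\bigl(a(n_0+N)P(n_0+N,n_0)\bigr)$, equals $c_N$.
\end{enumerate}

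\emph{Matching the $\mu$ term.} Write $1/P(n_0+N,n_0)=\prod_{k=n_0}^{n_0+N-1}\bigl(-b(k)/a(k)\bigr)$. The denominator then contains $a(n_0)a(n_0+1)\cdots a(n_0+N)$. The factor $a(n_0)$ cancels against the prefactor, and reindexing the remaining $a$'s gives
$$
\frac{a(n_0)}{a(n_0+N)P(n_0+N,n_0)}=(-1)^N\prod_{j=0}^{N-1}\frac{b(n_0+j)}{a(n_0+j+1)}.
$$
Multiplying by $\mu_{n_0+N}$ gives exactly $c_N$. Telescoping then yields $a(n_0)I_{n_0}=R_0=S_N+R_N$ for every $N\ge1$.

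The only real obstacle is bookkeeping: the sign $(-1)^N$ and the shift by one between the indices of $b$ and $a$ in this reindexing. Everything else is mechanical. Note that the identity holds for any solution $(I_n)$ and makes no convergence claim, consistent with the purely formal nature of the expansion in Theorem~\ref{formal series}.
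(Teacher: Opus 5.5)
Your proof is correct and is essentially the paper's argument. The paper sets $T_n=a(n)I_n$, iterates $T_n=\mu_n-\frac{b(n)}{a(n+1)}T_{n+1}$ $N$ times to get $T_{n_0}=S_N+T_{n_0+N}\prod_{j=0}^{N-1}\bigl(-\frac{b(n_0+j)}{a(n_0+j+1)}\bigr)$, and then identifies the remainder with $R_N$ using the same product identity $\frac{a(n_0)}{a(n_0+N)P(n_0+N,n_0)}=(-1)^N\prod_{j=0}^{N-1}\frac{b(n_0+j)}{a(n_0+j+1)}$ that you use. You do the same unrolling one step at a time on the normalized tail, which makes the induction explicit and also shows directly that $R_N-R_{N+1}=c_N$, the tail-difference identity underlying $\mathcal{M}$-admissibility.
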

\begin{proof}
    Proceeding as in \ref{formal series}, if we take $a(n)I_n = T_n$, we have that $$T_{n_0} = \bigg(\sum_{k=0}^{N-1} (-1)^k\mu_{{n_0}+k} \prod_{j=0}^{k-1} \frac{b({n_0}+j)}{a({n_0}+j+1)} \bigg) + 
    T_{n_0+N}\prod_{j=0}^{N-1} -\frac{b({n_0}+j)}{a(n_0+j+1)}$$
    Now consider the remainder term, which is $$T_{n_0+N}\prod_{j=0}^{N-1} -\frac{b({n_0}+j)}{a(n_0+j+1)} = T_{n_0+N} \frac{a(n_0)}{a(n_0+N)} \prod_{k=n_0}^{n_0+N-1} -\frac{b(k)}{a(k)}$$ $$=a(n_0+N)I_{n_0+N}  \frac{a(n_0)}{a(n_0+N)} \frac{1}{P(n_0+N,n_0)} = \frac{a(n_0)I_{n_0+N}}{P(n_0+N,n_0)} = R_N$$ 

    Now, notice that the first term $\sum_{k=0}^{N-1} (-1)^k\mu_{{n_0}+k} \prod_{j=0}^{k-1} \frac{b({n_0}+j)}{a({n_0}+j+1)}$ is exactly the $N$th partial sum of the series. Thus, $T_{n_0} = S_N + R_N$ as desired. 
\end{proof}

This formula essentially proves that the tail $R_N$ is the actual tail of the associated series. 

There is a second notion of well-definedness that must also be addressed. A given formal series may admit more than one recurrence representation, and a summation method based on recurrences would be unacceptable if different presentations of the same series produced different values. The next result ensures that this does not happen. Once two recurrences encode the same formal coefficients, the difference between tails is once again constant, and the finite part correction removes it. Thus, the method only depends on the series, not on the auxiliary recurrence used to present it.   
\begin{lemma}
    C-Summation is well defined. Equivalently, if formal series $$\mcS=\sum_{k=0}^\infty c_k$$ admits two recurrence representations $$a(n)I_n + b(n)I_{n+1} = \mu_n \ \ \ \ \ \text{and} \ \ \ \ \ \tilde{a}(n)\tilde{I}_n + \tilde{b}(n) \tilde{I}_{n+1} = \tilde{\mu}_n$$ with $$c_k = (-1)^k \mu_{n_0+k} \prod_{j=0}^{k-1} \frac{b(n_0 + j)}{a(n_0+j+1)} = (-1)^k \tilde{\mu}_{n_0+k} \prod_{j=0}^{k-1} \frac{\tilde{b}(n_0+j)}{\tilde{a}(n_0+j+1)}$$ which both have $\mcM$ admissible tails, then the two $C$-sums induced by both of these recurrences are equal. 
\end{lemma}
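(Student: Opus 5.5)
The approach is to use Proposition~\ref{partial sums} for both presentations and compare the two normalized tails directly. Let $(I_n)$ and $(\tilde I_n)$ be solutions of the two recurrences with $\mcM$-admissible tails $R$ and $\tilde R$. Write $S_N=\sum_{k=0}^{N-1}c_k$ for the partial sums. This quantity is common to both presentations, because by hypothesis the two recurrences produce the same coefficients $c_k$. Proposition~\ref{partial sums} gives, for every $N\ge 1$,
$$
a(n_0)I_{n_0}=S_N+R_N,\qquad \tilde a(n_0)\tilde I_{n_0}=S_N+\tilde R_N .
$$
Subtracting eliminates $S_N$. The conclusion is that $\tilde R_N-R_N=\tilde a(n_0)\tilde I_{n_0}-a(n_0)I_{n_0}=:\lambda$ is a constant independent of $N$. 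So the two tails differ by a constant shift, exactly as in the solution-independence lemma, even though the recurrences and the normalizing products $P(n,n_0)$ may be completely different.

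Next I invoke Corollary~\ref{constant shift}. Since $R$ is $\mcM$-admissible and $\tilde R=R+\lambda$ for all $N\ge1$, that corollary, together with eventual invariance (Corollary~\ref{cor:eventual invariance}) to discard the index $N=0$ if needed, gives
$$
\FP(\tilde R)=\FP(R)+\lambda .
$$
Hence
$$
\tilde a(n_0)\tilde I_{n_0}-\FP(\tilde R)=a(n_0)I_{n_0}+\lambda-\FP(R)-\lambda=a(n_0)I_{n_0}-\FP(R),
$$
so the two $C$-sums coincide. Combined with the previous lemma, this also shows that the choice of solution within each presentation is irrelevant.

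The main point needing care is the verification that Proposition~\ref{partial sums} applies verbatim to each presentation with the \emph{same} index $n_0$ and the same $S_N$. This requires $a,b,\tilde a,\tilde b$ to be nonzero for $n\ge n_0$, which the standing assumption in Section~\ref{finite term} provides, and the coefficient identity $c_k$ to hold termwise, not merely up to reindexing. If the two presentations used different base indices $n_0,\tilde n_0$, I would first reindex one of them so that both generate $c_k$ from $k=0$. The telescoping identity in Proposition~\ref{partial sums} depends only on the coefficients $c_k$ and the remainder term, so after reindexing the argument above goes through unchanged. Everything else is the already-established constant-shift behavior of $\FP$.
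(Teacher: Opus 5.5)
Your proposal is correct and is essentially the paper's proof. Both apply Proposition~\ref{partial sums} to each presentation and subtract, so that the two normalized tails differ by the constant $\tilde a(n_0)\tilde I_{n_0}-a(n_0)I_{n_0}$, and both then use Corollary~\ref{constant shift} to cancel that constant in the $C$-sum. Your extra remarks on eventual invariance and reindexing to a common $n_0$ are harmless refinements that the statement as posed does not require.
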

\begin{proof}
    We know that $a(n_0)I_{n_0}=T_{n_0} = S_N + R_N$, where $S_N$ are the partial sums of the series $\mcS$, as we show in \ref{partial sums}. Similarly, $\tilde{a}(n_0)\tilde{I}_{n_0} = \tilde{T}_{n_0} = S_N+ \tilde{R}_N$. Subtracting the two gives $a(n_0)I_{n_0} -\tilde{a}(n_0)\tilde{I}_{n_0} = R_N - \tilde{R}_N $. The left hand side is independent of $N$, so there is a constant $C$ such that $R_N = \tilde{R}_N + C$. Now apply \ref{constant shift}. Since both $R$ and $\tilde{R}$ are $\mcM$ admissible, we have $$\text{FP}_\infty (R) = \text{FP}_\infty(\tilde{R} + C) = \text{FP}_\infty(\tilde{R}) + C$$ Thus, the difference of the $C$ sums is given by $$(a(n_0)I_{n_0}-\text{FP}_\infty({R})) -(\tilde{a}(n_0)\tilde{I}_{n_0}-\text{FP}_\infty(\tilde{R})) = C + (-C) = 0$$ This shows that the $C$-sums are the same. 
\end{proof}

Having shown that the method only depends on the series being summed, we next verify that it is a meaningful extension of summation in the convergent case. This is the regularity property. In the convergent case, the normalized tail tends to a limit, and the finite part reproduces that limit exactly. The construction therefore extends ordinary summation. 
\begin{lemma} \label{regularity proof}
    If a convergent series is $C$-Summable, then its $C$-Sum is equal to its ordinary sum. 
\end{lemma}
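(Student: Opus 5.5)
The argument combines the partial-sum identity of Proposition~\ref{partial sums} with the agreement of the finite part with ordinary limits (Corollary~\ref{agreement with limits}).

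Suppose $\mcS=\sum_{k\ge0} c_k$ converges to $s$ and is $C$-summable. By definition, some solution $(I_n)$ has an $\mcM$-admissible normalized tail $R$. By the preceding lemmas the $C$-sum is independent of the choice of such a solution and of the recurrence presentation, so we may work with this $R$. Proposition~\ref{partial sums} gives
$$a(n_0)I_{n_0}=S_N+R_N \qquad (N\ge 1),$$
where $S_N$ is the $N$th partial sum. Since $S_N\to s$, we get $R_N=a(n_0)I_{n_0}-S_N\to a(n_0)I_{n_0}-s$. The normalized tail therefore converges to a finite limit.

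Next apply Corollary~\ref{agreement with limits} to the admissible tail $R$, which gives $\FP(R)=a(n_0)I_{n_0}-s$. Substituting into the definition yields
$$C(\mcS)=a(n_0)I_{n_0}-\FP(R)=s.$$
In the degenerate branch where $c_N=R_N-R_{N+1}=0$ eventually, $R$ is eventually constant. Its eventual value is then the limit, and the same conclusion holds; Corollary~\ref{agreement with limits} already covers this case.

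I expect no real obstacle, since the substantive input is Lemma~\ref{lem:shift nonresonant goes to zero}. That lemma is built into Corollary~\ref{agreement with limits} and uses shift-nonresonance of $U_c$, which is part of $\mcM$-admissibility. It forces $U_c(N)\to0$ once $c(N)=R_N-R_{N+1}\to0$. The only point needing care is that the partial-sum identity holds for every $N\ge1$ with the same constant $a(n_0)I_{n_0}$, so that convergence of $S_N$ transfers directly to convergence of $R_N$. This rests on the standing assumption that $a(n),b(n)\neq0$, which makes $P(n_0+N,n_0)$ nonzero.
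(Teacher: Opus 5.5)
Your proposal is correct and follows the paper's proof essentially verbatim. The partial-sum identity $a(n_0)I_{n_0}=S_N+R_N$ turns convergence of $S_N$ into convergence of $R_N$, and Corollary~\ref{agreement with limits} then gives $\FP(R)=a(n_0)I_{n_0}-s$; your appeal to independence of the solution and of the presentation is unnecessary but harmless.
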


\begin{proof}
    We can express $T_{n_0}$ as $S_N + R_N$ for each finite $N$ where $S_N$ are the partial terms of $\mathcal{S}$. From there we get $S_N = T_{n_0} - R_N$. As $N\to \infty $, this means $S_N = T_{n_0} -R_N  \to \mathcal{S}$. Rearranging gives $R_N = T_{n_0}-S_N \to T_{n_0} - \mathcal{S}$ By \ref{agreement with limits}, this means $\text{FP}_\infty (R) = T_{n_0} - \mathcal{S}$. Finally, this means that the C-summation is $$T_{n_0} - \text{FP}_\infty (R) = T_{n_0} - (T_{n_0} - \mathcal{S}) = \mathcal{S}$$
\end{proof}

We now turn to proving properties of the method. The first is homogeneity. If every term of the input series is scaled by some constant, then the recurrence and its admissible tail are scaled by the same constant, meaning the assigned value should scale accordingly. The property confirms that $C$-Summation responds to size in the same way ordinary summation does and does not introduce any nonlinear distortion under scalar multiplication. 

\begin{lemma}
    C-Summation is homogeneous. 
\end{lemma}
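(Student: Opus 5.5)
The plan is to realize the scaled series $k\mathcal{S}$ by a recurrence presentation obtained directly from the one for $\mathcal{S}$, and then to track how each ingredient of the $C$-sum scales. Suppose $\mathcal{S}=\sum c_k$ is $C$-summable via $a(n)I_n+b(n)I_{n+1}=\mu_n$, with a solution $(I_n)$ whose normalized tail $R$ is $\mathcal{M}$-admissible. For $k\in\CC$, we keep $a(n)$ and $b(n)$ and replace $\mu_n$ by $k\mu_n$. Since the coefficient formula $c_k=(-1)^k\mu_{n_0+k}\prod_{j=0}^{k-1}b(n_0+j)/a(n_0+j+1)$ is linear in $\mu$, this recurrence presents $\sum k c_k$. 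Moreover, $(kI_n)$ solves $a(n)(kI_n)+b(n)(kI_{n+1})=k\mu_n$ by linearity.

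Because $a,b$ are unchanged, the product $P(n,n_0)$ is unchanged. The normalized tail of the solution $(kI_n)$ is therefore
\[
a(n_0)\frac{kI_{n_0+N}}{P(n_0+N,n_0)}=kR_N .
\]
By Corollary~\ref{constant scaling}, $kR$ is $\mathcal{M}$-admissible and $\FP(kR)=k\,\FP(R)$. Thus $k\mathcal{S}$ is $C$-summable, with
\[
C(k\mathcal{S})=a(n_0)\,kI_{n_0}-\FP(kR)=k\bigl(a(n_0)I_{n_0}-\FP(R)\bigr)=k\,C(\mathcal{S}).
\]

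The case $k=0$ is a minor point. There the presentation has $\mu\equiv 0$, the zero solution gives $R\equiv 0$, which is eventually constant, and the sum is $0$. Corollary~\ref{constant scaling} already covers $\lambda=0$, so no separate argument should be needed.

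The main thing to check is that the value is attached to the series $k\mathcal{S}$ and not just to this particular presentation. This follows from the preceding well-definedness lemma: any other admissible presentation of $k\mathcal{S}$, and any other admissible solution, gives the same value. Consequently the computation through the scaled presentation computes $C(k\mathcal{S})$ itself. The only other hypotheses to verify are that $a(n),b(n)\neq 0$ still hold, which is automatic since $a,b$ are untouched, and that admissibility of the scaled tail holds. The latter includes existence of the scaled selector and its shift-nonresonance, and both are already handled inside Corollary~\ref{constant scaling} via Lemma~\ref{lem:selector scalar closure}.
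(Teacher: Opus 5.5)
Your proof is correct and follows the paper's argument: keep $a(n),b(n)$, scale $\mu_n$ and the solution by the constant, note that the normalized tail becomes $kR$, and invoke Corollary~\ref{constant scaling} to get $\FP(kR)=k\,\FP(R)$. Your extra remarks, on the case $k=0$ and on using solution- and presentation-independence to attach the value to $k\mathcal{S}$ itself, spell out what the paper leaves implicit.
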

\begin{proof}
    Suppose $I_n$ is a sequence with $a(n)I_n + b(n)I_{n+1} = \mu_n$, with $\mathcal{M}$ admissible tail $R$. Thus $C$-Summation is defined. Then, we have that $a(n)(c\cdot I_n) + b(n)(c\cdot I_{n+1}) = c\cdot \mu_n$. We know that $\text{FP}_\infty(cR) = c\cdot \text{FP}_\infty(R)$. Applying $C$-Summation to both sequences, we get that $$a(n_0)I_{n_0} - \text{FP}_\infty (R) = \sum_{k=0}^\infty (-1)^k{\mu_{n_0+k}} \prod_{j=0}^{k-1} \frac{b(n_0+j)}{a(n_0+j+1)} $$ $$a(n_0)(c I_{n_0})  - \text{FP}_\infty (cR) = c(a(n_0) I_{n_0} - \text{FP}_\infty (R)) $$ $$=  \sum_{k=0}^\infty (-1)^k (c\mu_{n_0+k}) \prod_{j=0}^{k-1} \frac{b(n_0+j)}{a(n_0+j+1)}$$ This shows that multiplying the sum by a constant $c$ also multiplies its $C$-sum by $c$, showing that $C$-Summation is homogeneous, as desired. 

\end{proof}

The next property is stability, which controls the effect of changing finitely many initial terms. For ordinary convergent series, deleting the first term simply subtracts that term from the sum, and one would like the same behavior here. In the recurrence setting, stability is more delicate because the assigned value is defined through the tail rather than a limit of remainders. The shift from $R_N$ to $R_{N+1}$ is therefore the critical issue. If the finite part is preserved under this step, then stability holds, and truncating the series changes the value in the expected way.  

\begin{lemma}
    $C$-Summation is stable whenever the normalized tail is shift compatible in the sense that it satisfies the property that $R$ and $R^+$ are both $\mcM$ admissible and $\text{FP}_\infty (R) = \text{FP}_\infty (R^+)$ where $R^+$ denotes the shifted tail $R_N^+ := R_{N+1}$.
\end{lemma}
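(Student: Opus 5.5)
The plan is to realize the truncated series $\sum_{k\ge1}c_k$ through the same recurrence, started one index later, and then compare the two $C$-sums via Proposition~\ref{partial sums}. Concretely, $\sum_{k\ge 1} c_k$ is presented by the recurrence $a(n)I_n+b(n)I_{n+1}=\mu_n$ with base point $n_0+1$ in place of $n_0$, up to a scalar factor. Indeed, factoring out the $k=1$ prefactor gives
$$c_{k+1}=-\frac{b(n_0)}{a(n_0+1)}\,(-1)^k\mu_{n_0+1+k}\prod_{j=0}^{k-1}\frac{b(n_0+1+j)}{a(n_0+2+j)}.$$
So the tail series equals $\lambda\,\mathcal{S}'$, where $\lambda=-b(n_0)/a(n_0+1)$ and $\mathcal{S}'$ is the series attached to the recurrence at base point $n_0+1$. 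By homogeneity (proved above) it then suffices to compute $C(\mathcal{S}')$ and multiply by $\lambda$.

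Next I would use the same solution $(I_n)$ for both presentations. The normalized tail at base point $n_0+1$ is
$$R'_N=a(n_0+1)\frac{I_{n_0+1+N}}{P(n_0+1+N,n_0+1)}.$$
Since $P(n_0+1+N,n_0)=-\frac{a(n_0)}{b(n_0)}P(n_0+1+N,n_0+1)$, this gives
$$\lambda R'_N=-\frac{b(n_0)}{a(n_0+1)}\,a(n_0+1)\,\frac{I_{n_0+N+1}}{P(n_0+N+1,n_0+1)}=a(n_0)\frac{I_{n_0+N+1}}{P(n_0+N+1,n_0)}=R_{N+1}=R^+_N.$$
Thus the scaled tail of the truncated presentation is exactly the shifted tail $R^+$. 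The hypothesis that $R^+$ is $\mcM$-admissible, together with Corollary~\ref{constant scaling} (taking $\lambda\neq0$, which holds since $b(n_0)\neq0$), makes $R'$ admissible. It also gives $\lambda\,\FP(R')=\FP(R^+)$, so the truncated series is $C$-summable.

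Then compute. The $C$-sum of $\sum_{k\ge1}c_k$ is
$$\lambda\bigl(a(n_0+1)I_{n_0+1}-\FP(R')\bigr)=-b(n_0)I_{n_0+1}-\FP(R^+).$$
The recurrence at $n_0$ gives $-b(n_0)I_{n_0+1}=a(n_0)I_{n_0}-\mu_{n_0}=a(n_0)I_{n_0}-c_0$, which is also Proposition~\ref{partial sums} with $N=1$, i.e.\ $R_1=a(n_0)I_{n_0}-c_0$. Hence
$$c_0+C\Bigl(\sum_{k\ge1}c_k\Bigr)=a(n_0)I_{n_0}-\FP(R^+)=a(n_0)I_{n_0}-\FP(R)=C(\mathcal{S}),$$
where the middle equality uses shift compatibility. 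This is stability in the sense of Definition~\ref{stability}.

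The main obstacle is bookkeeping, not analysis. First, the homogeneity lemma is stated with a fixed recurrence, so I must check that rescaling $\mu$ by $\lambda$ yields a legitimate presentation whose tail is $\lambda R'$. Second, since the $C$-sum is independent of presentation (the well-definedness lemma), the value attached to $\sum_{k\ge1}c_k$ must not depend on this particular choice. The base-point reindexing could also clash with the convention that presentations share the same $n_0$. To handle that, I would reindex $\tilde a(n)=a(n+1)$, and similarly for $b$ and $\mu$, so that the truncated series is presented at base point $n_0$ with tail $\lambda^{-1}R^+$. The identity $P(n_0+1+N,n_0)=-\frac{a(n_0)}{b(n_0)}P(n_0+1+N,n_0+1)$ is the one computation to verify carefully.
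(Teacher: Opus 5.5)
Your proposal is correct and follows essentially the same route as the paper. Both arguments write $\sum_{k\ge1}c_k$ as $-\frac{b(n_0)}{a(n_0+1)}$ times the series attached to base point $n_0+1$, identify the rescaled tail with $R^+$ by peeling the $k=n_0$ factor off $P$, and transfer admissibility and the finite part via Corollary~\ref{constant scaling}. They then rewrite $-b(n_0)I_{n_0+1}$ using the recurrence at $n_0$ and finish with shift compatibility. Your explicit appeal to homogeneity for the scalar prefactor simply makes precise a step the paper uses implicitly.
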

\begin{proof}

    Suppose $I_n$ is a sequence with $a(n)I_n + b(n)I_{n+1} = \mu_n$, with $\mathcal{M}$ admissible tail $R=R^{(n_0)}$. Thus $C$-Summation is defined.

    Now, let $$R_N^{(n_0)} = \frac{a(n_0)I_{n_0+N}}{P(n_0+N,n_0)} \qquad R_N^{(n_0+1)} = \frac{a(n_0+1)I_{{n_0+1+N}}}{P(n_0+1+N, n_0+1)}$$

    We know that $$R_{N+1}^{(n_0)} = -\frac{b({n_0})}{a(n_0+1)}R_{N}^{(n_0+1)}$$ because \begin{align*}R_N^{(n_0+1)} =& \frac{a(n_0+1)I_{{n_0+1+N}}}{P(n_0+1+N, n_0+1)}=  \frac{a(n_0+1)I_{n_0+1+N}}{-\frac{ b(n_0)}{a(n_0)} P(n_0+N+1, n_0)} \\ =& -\frac{a(n_0)a(n_0+1)I_{n_0+N+1}}{b(n_0) P(n_0+N+1,n_0)} = - \frac{a(n_0+1)}{b(n_0)}R_{N+1}^{(n_0)}
    \end{align*}

    Now consider the $n_0+1$ shifted series. This can be rewritten so that the index starts at 0.  $$\sum_{k=1}^\infty (-1)^k{\mu_{n_0+k}} \prod_{j=0}^{k-1} \frac{b(n_0+j)}{a(n_0+j+1)} =-\frac{b(n_0)}{a(n_0+1)}\sum_{k=0}^\infty (-1)^{k}{\mu_{(n_0+1)+k}} \prod_{j=0}^{k-1} \frac{b((n_0+1)+j)}{a((n_0+1)+j+1)}$$ 
    
    Equivalently, as sequences, $(R^{(n_0)})^+ = -\frac{b(n_0)}{a(n_0+1)}R^{(n_0+1)}$. Now, we notice that the second series is in the correct form for $C$-Summation. Since $(R^{(n_0)})^+$ is $\mcM$ admissible by shift compatibility, \ref{constant scaling} implies that $R^{(n_0+1)}$ is  $\mcM$ admissible. The second series then has $C$-Sum $a(n_0+1)I_{n_0+1} - \text{FP}_\infty (R^{(n_0+1)})$. Plugging this into the above expression gives us $$=- b(n_0)I_{n_0+1}  + \frac{b(n_0)}{a(n_0+1)}\text{FP}_\infty (R^{(n_0+1)}) $$ Using the recurrence $\mu_{n_0} - a(n_0)I_{n_0} = b(n_0)I_{n_0+1}$, we get $$\mu_{n_0} + \sum_{k=1}^\infty (-1)^k{\mu_{n_0+k}} \prod_{j=0}^{k-1} \frac{b(n_0+j)}{a(n_0+j+1)} = a(n_0)I_{n_0} +\frac{b({n_0})}{a(n_0+1)} \text{FP}_\infty(R^{(n_0+1)}) $$ Now, recall that $ -\frac{a(n_0+1)}{b(n_0)}R_{N+1}^{(n_0)} =R_{N}^{(n_0+1)}$. Equivalently, as sequences, $(R^{(n_0)})^+ = -\frac{b(n_0)}{a(n_0+1)}R^{(n_0+1)}$. This, along with $\ref{constant scaling}$ gives us that $$\mu_{n_0} + \sum_{k=1}^\infty (-1)^k{\mu_{n_0+k}} \prod_{j=0}^{k-1} \frac{b(n_0+j)}{a(n_0+j+1)} = a(n_0)I_{n_0} -\text{FP}_\infty((R^{(n_0)})^+)$$ Finally, applying the shift compatibility assumption $\text{FP}_\infty(R^{(n_0)}) = \text{FP}_\infty((R^{(n_0)})^+)$ gives us that $$\mu_{n_0} + \sum_{k=1}^\infty (-1)^k{\mu_{n_0+k}} \prod_{j=0}^{k-1} \frac{b(n_0+j)}{a(n_0+j+1)} = a(n_0)I_{n_0} - \text{FP}_\infty(R^{(n_0)})$$ This is exactly stability.

\end{proof}

The final step of our proof relies on the identity $\text{FP}_\infty (R) = \text{FP}_\infty(R^+)$. This is not a consequence of $\mcM$ admissibility alone. A one step shift could change the finite part of the expansion. This is not a question of whether the summation method assigns a value. In cases where the hypothesis is false, the method may assign a value perfectly well. What can fail however, is the stability principle, namely removing finitely many initial terms only changes the sum by exactly that finite amount. 

This distinction is important. If shift-compatibility fails, that does not mean $C$-Summation becomes meaningless. In these cases truncating or re-indexing the series may alter the assigned value in a way that is not governed by the usual stability axiom. However, the results obtained may be less useful than in cases where shift-compatibility does not fail. 

The examples in this paper all satisfy stability. However, it is important to understand where stability fails and how it affects the applicability of the method. 

Finally, one can check how much linearity survives. Full linearity across arbitrary recurrence presentations is too much to expect, and is false in general. However, when $a(n)$ and $b(n)$ are fixed and only $\mu_n$ varies, the structure is much better behaved. In that case, we get linearity in the third argument, when we express the associated series as a triple $(a(n), b(n), \mu_n)$.  
\begin{lemma}
    C-Summation is linear in the third argument. In particular, suppose two series are defined by the triples $S_1=(a(n), b(n), \mu_n)$ and $S_2=(a(n), b(n), \nu_n)$, with $\mcM$ admissible tails $R^{I}$ and $R^{J}$. If $R^I$ and $R^J$ and $R^I+R^J$ are $\mcM$ admissible then the $C$-sum of $(a(n),b(n), \mu_n + \nu_n)$ is equal to the sum of $S_1$ and $S_2$.
\end{lemma}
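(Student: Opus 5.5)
The proof reduces linearity in $\mu$ to the additivity of $\FP$ given by Corollary~\ref{additivity}. Since $a(n)$ and $b(n)$ are fixed, the recurrence is linear in the pair (solution, right-hand side). Let $(I_n)$ solve $a(n)I_n+b(n)I_{n+1}=\mu_n$ with $\mcM$-admissible tail $R^I$, and let $(J_n)$ solve $a(n)J_n+b(n)J_{n+1}=\nu_n$ with $\mcM$-admissible tail $R^J$. Set $K_n:=I_n+J_n$. Adding the two recurrences shows that $K_n$ solves
$$a(n)K_n+b(n)K_{n+1}=\mu_n+\nu_n.$$

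Next I would compare the tails. The normalizing product $P(n,n_0)$ depends only on $a$ and $b$, so it is the same for all three recurrences. Hence
$$R^K_N=a(n_0)\frac{I_{n_0+N}+J_{n_0+N}}{P(n_0+N,n_0)}=R^I_N+R^J_N$$
for every $N$, so $R^K=R^I+R^J$ as sequences. By hypothesis $R^I+R^J$ is $\mcM$-admissible. Therefore the triple $(a(n),b(n),\mu_n+\nu_n)$ is $C$-summable via the solution $K$, and by the lemma on independence of the chosen solution, any other admissible solution gives the same value.

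I would then compute directly. Corollary~\ref{additivity} applies because $R^I$, $R^J$ and $R^I+R^J$ are all admissible, so
$$C(\mu+\nu)=a(n_0)(I_{n_0}+J_{n_0})-\FP(R^I+R^J)=\bigl(a(n_0)I_{n_0}-\FP(R^I)\bigr)+\bigl(a(n_0)J_{n_0}-\FP(R^J)\bigr).$$
The right-hand side is the $C$-sum of $S_1$ plus the $C$-sum of $S_2$. I would also note that the formal series attached to $\mu+\nu$ is the termwise sum of the two series, since the coefficient $c_k$ is linear in $\mu$ for fixed $a,b$. So the identity really is a statement about the sum of the series.

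The step needing the most care is justifying the additivity of $\FP$. Corollary~\ref{additivity} deliberately assumes admissibility of all three tails rather than deriving the sum's admissibility, and that is exactly why the statement lists $R^I+R^J$ as a hypothesis. Beyond checking that the hypotheses match, there is no real obstacle. The edge cases where some tail is eventually constant are already handled inside that corollary.
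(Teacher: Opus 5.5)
Your proposal is correct and follows essentially the same route as the paper. You form $K_n=I_n+J_n$, observe that its normalized tail is $R^I+R^J$, apply Corollary~\ref{additivity}, and regroup. You also make explicit two points the paper leaves implicit: that $P(n,n_0)$ depends only on $a$ and $b$, and that independence of the chosen solution makes the resulting value well defined.
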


\begin{proof}
    We can describe the associated series by the triple $(a(n), b(n), \mu_n)$. Suppose $a(n)I_n + b(n) I_{n+1} = \mu_n $  and $a(n)J_n + b(n) J_{n+1} = \nu_n$, where $I_n, J_n$ have $R^{I}$ and $R^J$  both $\mathcal{M}$ admissible. Now, we see that $a(n)(I_n+J_n)+b(n)(I_{n+1}+J_{n+1}) = \mu_n +\nu_n$. By assumption we have that $R^I+R^J$ is also $\mathcal{M}$ admissible. Then, by \ref{additivity} we know that $\text{FP}_\infty (R^I)+ \text{FP}_\infty (R^J) = \text{FP}_\infty (R^I+R^J)$ Then, we have that $$a(n_0)I_{n_0}- \text{FP}_\infty (R^I) +a(n_0)J_{n_0} - \text{FP}_\infty (R^J)$$ $$= \sum_{k=0}^\infty (-1)^k{\mu_{n_0+k}} \prod_{j=0}^{k-1} \frac{b(n_0+j)}{a(n_0+j+1)} + \sum_{k=0}^\infty (-1)^k{\nu_{n_0+k}} \prod_{j=0}^{k-1} \frac{b(n_0+j)}{a(n_0+j+1)} $$ and that $$\sum_{k=0}^\infty (-1)^k{(\mu_{n_0+k}+ \nu_{n_0+k)}} \prod_{j=0}^{k-1} \frac{b(n_0+j)}{a(n_0+j+1)}$$ $$ = a(n_0)(I_{n_0} + J_{n_0}) - \text{FP}_\infty (R^I + R^J)$$

    However, we have that $$a(n_0)I_{n_0}- \text{FP}_\infty (R^I) +a(n_0)J_{n_0} - \text{FP}_\infty (R^J)$$ $$ = a(n_0)(I_{n_0} + J_{n_0})-  \text{FP}_\infty (R^I)- \text{FP}_\infty (R^J)$$ $$= a(n_0)(I_{n_0} + J_{n_0})-  \text{FP}_\infty (R^I+R^J)$$
    This proves linearity in the third argument. 
\end{proof}

In summary, the results above show that $C$-Summation is a well defined summation method. The value it assigns does not depend on the choice of solution to the underlying recurrence, nor on the particular recurrence used to represent the series. It also agrees with ordinary summation in the convergent case and respects scalar multiplication. Under a shift compatibility hypothesis, it is also stable. The key idea is that the ambiguity coming from the homogeneous part of the recurrence is captured by the normalized tail and removed by taking its finite part. In this way, the method leads to a natural and consistent way of assigning values to divergent series while preserving the main properties one expects of a summation method.

\section{Class Theorems}

The finite-part construction gives a definition of $C$-summability once the normalized tail is known to be $\mcM$-admissible. The purpose of this section is to give concrete hypotheses which imply that admissibility. In particular, the main result below is a class theorem for rapidly growing alternating series: it shows that if a phase-free coefficient germ has sufficient vertical decay and its integer values grow with successive ratio tending to infinity, then $\sum_{n=0}^{\infty}(-1)^n f(n)$ is $C$-summable.

The proof has two components. First, a ratio criterion shows that
rapid growth of the tail difference forces the associated selector to be shift-nonresonant. Second, a vertical-line Bromwich inversion theorem gives checkable analytic conditions under which the Bromwich transform exists, reconstructs $f(N)$, and defines the alternating selector. Combining these two facts proves the alternating class theorem.

After proving this admissibility theorem, we compare the resulting $C$-sum with ordinary Borel-Laplace summation under an additional exponential-integrability hypothesis. This comparison is not meant to describe the full domain of $C$-summation; it identifies one concrete situation in which the recurrence selector and the Borel-Laplace integral compute the same value.

We begin with the growth criterion for shift-nonresonance. Suppose $U_N$ satisfies $U_N-U_{N+1}=c_N.$ If $c_N$ grows much faster than $c_{N-1}$, then the accumulated contribution from earlier differences is negligible compared with $c_N$. Thus $U_N$ is asymptotically smaller than its own first difference. This forces the second alternative in the definition of shift-nonresonance.

\begin{lemma} \label{shift sufficient ratio}
    Suppose $c_N \neq 0$ for all sufficiently large $N$, and $$\frac{|c_{N}|}{|c_{N-1}|} \to \infty$$ Then, let $U_N$ be any eventual sequence satisfying $U_N - U_{N+1} = c_N$ for sufficiently large $N$. Then $U$ is shift-nonresonant. 
\end{lemma}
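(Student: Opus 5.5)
The plan is to show directly that the second alternative in the definition of shift-nonresonance holds, in fact with $\varepsilon=1$. Since $(\Delta U)_N=U_N-U_{N+1}=c_N$, it suffices to prove
$$
|U_N|=o(|c_N|)\qquad(N\to\infty).
$$
Once this is known, $|U_N|\le|c_N|=|U_N-U_{N+1}|$ for all large $N$, which is the required inequality.

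First, express $U_N$ through the differences. Fix $M$ so large that the identity $U_k-U_{k+1}=c_k$ holds and $c_k\neq0$ for all $k\ge M$, and so that
$$
\frac{|c_k|}{|c_{k-1}|}\ge 2\qquad(k>M).
$$
Telescoping from $M$ to $N$ gives
$$
U_N=U_M-\sum_{k=M}^{N-1}c_k,
\qquad\text{so}\qquad
|U_N|\le |U_M|+\sum_{k=M}^{N-1}|c_k|.
$$
Note that $U$ is only determined up to an additive constant by the difference equation. The lemma claims the conclusion for every such $U$, and the constant $U_M$ is exactly how that freedom enters the argument.

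Next, bound the two pieces separately.

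For the sum, iterating the ratio bound backwards gives $|c_k|\le 2^{-(N-1-k)}|c_{N-1}|$ for $M\le k\le N-1$. A geometric series therefore yields
$$
\sum_{k=M}^{N-1}|c_k|\le 2|c_{N-1}|.
$$
The hypothesis $|c_N|/|c_{N-1}|\to\infty$ says $|c_{N-1}|=o(|c_N|)$, so this sum is $o(|c_N|)$.

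For the constant $U_M$, the ratio bound also gives $|c_N|\ge 2^{N-M}|c_M|\to\infty$. Hence the fixed number $|U_M|$ is $o(|c_N|)$ as well.

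Combining the two bounds gives $|U_N|=o(|c_N|)$, and the inequality stated at the start follows. The case $U_N=0$ for some $N$ causes no trouble, because the inequality $|U_N-U_{N+1}|\ge\varepsilon|U_N|$ is used in its multiplicative form and never divides by $U_N$.

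The only step requiring any care is the domination of the accumulated sum of earlier differences by the single term $|c_{N-1}|$. This is where the ratio hypothesis does its work: it turns the tail of the sum into a geometric series. Everything else is bookkeeping with the telescoping identity.
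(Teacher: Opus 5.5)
Your proposal is correct and follows essentially the same route as the paper: you telescope $U_N=U_M-\sum_{k=M}^{N-1}c_k$, dominate the sum by a geometric series in $|c_{N-1}|$ (the paper uses a general $q\in(0,1)$ where you take $q=1/2$), and use $|c_N|\to\infty$ to conclude $|U_N|=o(|c_N|)$. Your finish, which gives $|U_N-U_{N+1}|\ge|U_N|$ directly with $\varepsilon=1$, is slightly cleaner than the paper's, which passes through the ratio $|c_N|/|U_N|$ and so implicitly divides by $U_N$.
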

\begin{proof}
    Choose $q$ such that $0<q<1$. Since $\frac{|c_N|}{|c_{N-1}|} \to \infty$, we also have $\frac{|c_{N-1}|}{|c_N|} \to 0$. Then, for sufficiently large $j\ge N_0$, we have $|c_j|\leq q|c_{j+1}|$. Iterating this inequality gives $|c_j|\leq q^{N-j - 1}|c_{N-1}| $. Now consider the recurrence $U_N - U_{N+1} = c_N$. Starting at $N=N_0$ and iterating gives $$U_{N_0}- \sum_{j=N_0}^{N-1} c_j = U_N$$ Thus, taking absolute values and applying triangle inequality gives us that $$|U_{N}| \leq |U_{N_0}| + \sum_{j=N_0}^{N-1}\left|c_j \right| \leq |U_{N_0}| + |c_{N-1}|\sum_{j=N_0}^{N-1} q^{N-j-1} \leq |U_{N_0}| + \frac{1}{1-q}|c_{N-1}|$$ Since $\frac{|c_N|}{|c_{N-1}|} \to \infty$, and $c_N \neq0$ for sufficiently large $N$, we have $|c_N| \to \infty$. Thus, dividing by $|c_N|$ gives $$0\leq \frac{|U_N|}{|c_N|}\leq \frac{|U_{N_0}|}{|c_N|} + \frac{1}{1-q} \frac{|c_{N-1}|}{|c_N|} \to 0$$ Then, $$\frac{|U_N-U_{N+1}|}{|U_N|} = \frac{|c_N|}{|U_N|} \to \infty$$ Thus, there exists an $\varepsilon>0$ and $N_1$ such that $$|U_N - U_{N+1}| \geq \varepsilon |U_N| \qquad (N \geq N_1)$$ which proves shift-nonresonance. 
\end{proof}

This lemma is useful because it turns shift-nonresonance from an additional selector condition into a direct growth check on the tail difference. 

The next result is the analytic tool used to verify Bromwich-admissibility. The finite-part construction requires not only a formal Bromwich transform, but also a reconstruction formula $f(N)=\int_{-\infty}^{\infty}\mcB[f](u)e^{Nu}\,du.$ The following standard vertical-line inversion theorem gives this reconstruction
under elementary $L^1$ hypotheses. The proof is standard, so we will not show it here. 

\begin{thm} \label{Bromwich Inversion Theorem}
Let $f$ be holomorphic in a strip containing the vertical line $\Re z=\sigma$, and set
$g_\sigma(t):=f(\sigma+it)$. Assume that $g_\sigma\in L^1(\RR)$ and that its Fourier transform
$$\widehat g_\sigma(u):=\int_{-\infty}^{\infty} f(\sigma+it)e^{-iut}\,dt$$
also belongs to $L^1(\RR)$. Define $\mcB_\sigma[f](u):=\frac{e^{-u\sigma}}{2\pi}\widehat g_\sigma(u)$. Equivalently, $$\mcB_\sigma[f](u) =\frac{1}{2\pi i}\int_{\sigma-i\infty}^{\sigma+i\infty} f(z)e^{-uz}\,dz$$ Then $$f(\sigma+it)=\int_{-\infty}^{\infty}\mcB_\sigma[f](u)e^{u(\sigma+it)}\,du$$ for every $t\in\RR$ at which $g_\sigma$ is continuous. In particular, since $f$ is holomorphic in a neighborhood of the line, this holds for every $t\in\RR$.
\end{thm}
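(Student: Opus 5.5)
This is the classical Fourier inversion theorem transported to the vertical line $\Re z=\sigma$, so the plan is to reduce to inversion on $\RR$ and then undo the change of variables.

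\textbf{Step 1: the two forms of $\mcB_\sigma[f]$ agree.} On the line, parametrize $z=\sigma+it$, so $dz=i\,dt$ and $e^{-uz}=e^{-u\sigma}e^{-iut}$. Then
$$\frac{1}{2\pi i}\int_{\sigma-i\infty}^{\sigma+i\infty} f(z)e^{-uz}\,dz=\frac{e^{-u\sigma}}{2\pi}\int_{-\infty}^{\infty} g_\sigma(t)e^{-iut}\,dt=\frac{e^{-u\sigma}}{2\pi}\widehat g_\sigma(u).$$
The integral converges absolutely for every $u$ because $g_\sigma\in L^1(\RR)$.

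\textbf{Step 2: Fourier inversion.} Since $g_\sigma\in L^1(\RR)$ and $\widehat g_\sigma\in L^1(\RR)$, the standard inversion theorem gives
$$g_\sigma(t)=\frac{1}{2\pi}\int_{-\infty}^{\infty}\widehat g_\sigma(u)e^{iut}\,du$$
almost everywhere. The right-hand side is a continuous function of $t$, by dominated convergence, because $\widehat g_\sigma\in L^1$. So equality holds at every $t$ where $g_\sigma$ is continuous.

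The one step needing care is this passage from ``almost everywhere'' to ``pointwise''. Two continuous functions that agree almost everywhere agree everywhere, and this applies at each continuity point of $g_\sigma$. If one prefers not to rely on the black-box inversion theorem, it can be proved directly. Insert the Gaussian factor $e^{-\varepsilon u^2}$ and use Fubini, which is valid since the integrand is in $L^1(\RR^2)$. This produces the convolution of $g_\sigma$ with a heat kernel, which converges to $g_\sigma(t)$ at every continuity point. On the other side, dominated convergence as $\varepsilon\to0$ is justified by $\widehat g_\sigma\in L^1$.

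\textbf{Step 3: change back to $f$.} Substitute $\widehat g_\sigma(u)=2\pi e^{u\sigma}\mcB_\sigma[f](u)$ into the inversion formula:
$$f(\sigma+it)=\int_{-\infty}^{\infty}\mcB_\sigma[f](u)e^{u\sigma}e^{iut}\,du=\int_{-\infty}^{\infty}\mcB_\sigma[f](u)e^{u(\sigma+it)}\,du.$$
This integral is absolutely convergent because $|\mcB_\sigma[f](u)e^{u\sigma}|=|\widehat g_\sigma(u)|/2\pi$ is integrable.

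\textbf{Step 4: every $t$.} Holomorphy of $f$ in a strip containing the line makes $g_\sigma$ continuous on all of $\RR$. Hence the identity of Step 3 holds for every $t\in\RR$.
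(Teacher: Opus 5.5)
Your proof is correct. You reduce to classical $L^1$ Fourier inversion on the line $\Re z=\sigma$, then pass from an almost-everywhere identity to pointwise equality using continuity of the inverse transform, then substitute $\widehat g_\sigma(u)=2\pi e^{u\sigma}\mcB_\sigma[f](u)$; this is the standard argument the paper alludes to when it omits the proof as ``standard,'' so there is nothing further to compare.
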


We now apply the inversion criterion to alternating series. Let $f$ be a phase-free coefficient germ. The recurrence $I_N+I_{N+1}=f(N)$
is the recurrence naturally associated with the formal alternating series $\sum_{n=0}^{\infty}(-1)^nf(n).$ The alternating selector $\mcS_-[f]$ gives the distinguished solution of this recurrence. The theorem below gives checkable analytic hypotheses which ensure that this selector exists, reconstructs $f(N)$, and is shift-nonresonant.
Thus the abstract admissibility requirements from the finite-part section become concrete vertical-decay and growth assumptions on $f$.

\begin{thm}  \label{class theorem alt}
    Let $f\in \mcM^\flat$ be a phase free coefficient germ and suppose its canonical continuation $f(z)$ is holomorphic in a neighborhood of a closed right half plane $\Re(z)\ge 0$. Assume that for every $A>0$ and for $j=0,1,2$ there is a constant $C_A$ such that $$\left|\frac{\partial^j}{\partial t^j} f(\sigma + it) \right|\leq \frac{C_A}{(1+|t|)^3}$$ for all $0\le \sigma \le A$ and $t\in \RR$. Furthermore assume that $$ \frac{|f(N+1)|}{|f(N)|} \to \infty \qquad (N\to \infty) $$ Then, the series $$\sum_{n=0}^\infty (-1)^n f(n)$$ is $C$-summable. 
\end{thm}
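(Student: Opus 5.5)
The plan is to realize $\sum_{n\ge0}(-1)^nf(n)$ through the recurrence $I_N+I_{N+1}=f(N)$, i.e.\ $a\equiv b\equiv 1$, $\mu_n=f(n)$, $n_0=0$. Then $P(n,0)=(-1)^n$ and $c_k=(-1)^kf(k)$, as Theorem~\ref{formal series} requires. We also have $R_N=(-1)^NI_N$, so the tail difference is
$$c_N=R_N-R_{N+1}=(-1)^N(I_N+I_{N+1})=(-1)^Nf(N).$$
The germ $c(x)=e^{i\pi x}f(x)$ lies in $\mcM$, with $c^{(+)}=0$ and $c^{(-)}=f$. By Lemma~\ref{lemma:integer uniqueness} this is the unique tail-difference germ. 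It therefore suffices to exhibit one solution $(I_N)$ whose tail is $\mcM$-admissible. The natural choice is $I_N:=\mcS_-[f](N)$, once we show that this selector exists.

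\textbf{Bromwich-admissibility of $f$.} Fix $\sigma\in(0,A]$. The hypothesis with $j=0$ gives $g_\sigma(t)=f(\sigma+it)\in L^1$. The bounds for $j=1,2$ give $g_\sigma''\in L^1$, so two integrations by parts yield $|\widehat g_\sigma(u)|\le C/(1+u^2)$; in particular $\widehat g_\sigma\in L^1$. Theorem~\ref{Bromwich Inversion Theorem} at $t=0$ then gives
$$f(\sigma)=\int\mcB_\sigma[f](u)e^{u\sigma}\,du.$$
Independence of $\sigma$ follows from Cauchy's theorem on rectangles $[\sigma_1,\sigma_2]\times[-T,T]$: the horizontal sides are $O(T^{-3})$ uniformly in $0\le\sigma\le A$, so they vanish as $T\to\infty$. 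Since $A$ is arbitrary, $\mcB[f]$ is well defined and we may take $\sigma=N$ for each integer $N$. This gives the reconstruction $f(N)=\int\mcB[f](u)e^{Nu}\,du$, with absolute convergence, by the bound just obtained with $\sigma=N$.

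\textbf{Existence of $\mcS_-[f](N)$ (main obstacle).} We need
$$\int\frac{|\mcB[f](u)|e^{Nu}}{1+e^u}\,du<\infty.$$
As $u\to-\infty$ this follows from the absolute convergence just established, since the kernel is bounded. As $u\to+\infty$ the kernel contributes $e^{-u}$, so the integrand is bounded by a constant times $|\mcB[f](u)|e^{(N-1)u}$. Writing $\mcB[f](u)=e^{-u\sigma}\widehat g_\sigma(u)/2\pi$ with $\sigma=N-1$ or $\sigma=N$, each half-line is controlled by the $(1+u^2)^{-1}$ bound at a suitable line. On $u>0$ we use $\sigma=N-1\ge0$, which is allowed because holomorphy and the bounds hold on $\Re z\ge0$; on $u<0$ we use $\sigma=N$. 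The point is to choose the line separately on each half-line so that the exponential factor is harmless. With the selector in hand, Proposition~\ref{prop:exact inversion minus} gives $I_N+I_{N+1}=f(N)$ for large $N$. Using the recurrence backwards, we extend $I$ to all $N\ge0$; since $a=b=1$ never vanish, this is legitimate.

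\textbf{Conclusion.} The canonical selector is $U_c(N)=(-1)^N\mcS_-[f](N)$, and it satisfies $U_c(N)-U_c(N+1)=c_N$ by Proposition~\ref{prop:Uc identity}. Since $|c_N|/|c_{N-1}|=|f(N)|/|f(N-1)|\to\infty$ and $c_N\ne0$ eventually, Lemma~\ref{shift sufficient ratio} shows that $U_c$ is shift-nonresonant. Hence $R$ is $\mcM$-admissible, and the series is $C$-summable. In fact $R_N=U_c(N)$, so $\FP(R)=0$ and $C(\mcS)=\mcS_-[f](0)$.
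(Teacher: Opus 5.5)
Your proposal is correct and follows essentially the same route as the paper. It uses the presentation $a=b=1$, $\mu_n=f(n)$, $n_0=0$; Bromwich-admissibility via two integrations by parts, the vertical-line inversion theorem and Cauchy's theorem on rectangles; existence of $\mathcal{S}_-[f](N)$ by choosing vertical lines separately on the two half-axes; and shift-nonresonance from the shift-ratio lemma.

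The only differences are cosmetic: you use the lines $\Re z=N$ and $\Re z=N-1$ where the paper uses $\Re z=0$ and $\Re z=A>N+1$. In fact, since $0<(1+e^u)^{-1}\le 1$, existence of the selector follows at once from the absolute convergence of $\int \mathcal{B}[f](u)e^{Nu}\,du$ on the line $\Re z=N$. So the step you flag as the main obstacle needs no separate argument.
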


\begin{proof}
    We verify the three parts of admissibility in turn: existence and independence of the Bromwich transform, reconstruction of $f(N)$, existence of the alternating selector, and finally shift-nonresonance. The vertical decay assumption implies that, for each fixed $\sigma\ge 0$,
    $$\mcB_\sigma[f](u) =\frac{1}{2\pi i} \int_{\sigma - i\infty}^{\sigma+ i\infty} f(z)e^{-uz} dz$$
    converges absolutely. We have that on the vertical line $z=\sigma+it$, $|e^{-uz}| = e^{-u\sigma}$, while
    $$\left| f(\sigma + it)\right|\leq \frac{C_A}{(1+|t|)^3}$$
    for any $A>\sigma$. This means the integral converges absolutely.The next step is to show that $\mcB_\sigma[f](u)$ is independent of $\sigma$. This is where holomorphy in the right half-plane and uniform vertical decay are used. Fix $0\le \sigma_1<\sigma_2\le A$, and integrate the holomorphic function $f(z)e^{-uz}$ around the rectangle with vertical sides $\sigma_1,\sigma_2$ and horizontal sides at heights $\pm T$. By Cauchy's theorem the contour integral is zero. On the horizontal sides one has
    $$ |f(\sigma\pm iT)e^{-u(\sigma\pm iT)}| \le e^{-u\sigma}\frac{C_A}{(1+|T|)^3}\qquad \sigma\in[\sigma_1,\sigma_2] $$
    so the horizontal integrals tend to $0$ as $T\to\infty$. Hence the two vertical integrals are equal. Thus $\mcB_\sigma[f](u)$ is independent of $\sigma\ge 0$. We therefore write this common value as $\mcB[f](u)$.

    We also need decay of $\mcB[f]$ in the Bromwich variable $u$. This follows by integrating  the vertical line. For $u\neq 0$, integrate by parts twice:
    $$ \mcB_\sigma[f](u) = \frac{e^{-u\sigma}}{2\pi} \int_{-\infty}^{\infty} f(\sigma+it)e^{-iut}\,dt $$
    Since the $j=0,1$ bounds imply that the boundary terms vanish, we get
    $$ \mcB_\sigma[f](u) = -\frac{e^{-u\sigma}}{2\pi u^2} \int_{-\infty}^{\infty} \frac{\partial^2}{\partial t^2}f(\sigma+it)e^{-iut}\,dt $$
    Using the $j=2$ bound, and using the trivial bound near $u=0$, it follows that for every $A>0$ there is a constant $C_A'$ such that
    $$ |\mcB_\sigma[f](u)| \le C_A'\frac{e^{-u\sigma}}{1+u^2} \qquad (0\le \sigma\le A,\ u\in\RR). $$
    In particular, for fixed $\sigma$, this gives
    $$|\widehat g_\sigma(u)|\le \frac{C_\sigma}{1+u^2},$$
    so $\widehat g_\sigma\in L^1(\RR)$. Also, the $j=0$ vertical decay bound shows that $g_\sigma(t)=f(\sigma+it)$ belongs to $L^1(\RR)$.

    Therefore, by the vertical line Bromwich inversion theorem \ref{Bromwich Inversion Theorem}, applied on the line $\Re z=N$ with $t=0$, we get
    $$f(N) = \int_{-\infty}^\infty \mcB_N[f](u)e^{Nu}du$$
    for every $N\ge 0$. Since $\mcB_N[f]=\mcB[f]$, this becomes
    $$f(N) = \int_{-\infty}^\infty \mcB[f](u)e^{Nu}du.$$ The same estimates also give the absolute convergence required in the definition of Bromwich-admissibility. For $u\le 0$, using the estimate with $\sigma=0$, we have $|\mcB[f](u)e^{Nu}| \le C(1+u^2)^{-1}e^{Nu}$, which is integrable on $(-\infty,0]$. For $u\ge 0$, choose $A>N+1$. Then $$|\mcB[f](u)e^{Nu}|\le C_A(1+u^2)^{-1}e^{-(A-N)u},$$ which is integrable on $[0,\infty)$. Hence$$\int_{-\infty}^{\infty}\mcB[f](u)e^{Nu}\,du$$ converges absolutely for every sufficiently large integer $N$,
    
    Then, $f$ is Bromwich admissible, since the $\mcB_\sigma[f](u)$ integral converges absolutely, $\mcB_\sigma[f](u)$ is independent of $\sigma>0$, and we have $$f(N) = \int_{-\infty}^\infty \mcB[f](u)e^{Nu}du.$$

    We now prove that $\mcS_-[f](N)$ exists for every $N\ge 0$. The same estimates also prove absolute convergence of the  selector. The independence of $\sigma$ lets us choose different vertical lines on the two halves of the $u$-axis. By definition,  $$ \mcS_-[f](N) = \int_{-\infty}^{\infty} \frac{\mcB[f](u)e^{Nu}}{1+e^u}\,du.$$ Since $\mcB_\sigma[f]$ is independent of $\sigma$, we may estimate $\mcB[f]$ using different choices of $\sigma$ on the two halves of the $u$-axis. Split the integral into $(-\infty,0]$ and $[0,\infty)$. For $u\le 0$, choose $\sigma=0$. Then
    $$ |\mcB[f](u)|\le \frac{C}{1+u^2} $$
    so
    $$ \left| \frac{\mcB[f](u)e^{Nu}}{1+e^u} \right|\le \frac{C e^{Nu}}{1+u^2} $$
    which is integrable on $(-\infty,0]$. For $u\ge 0$, choose $\sigma=A$ with $A>N+1$. Then
    $$ |\mcB[f](u)|\le C_A' \frac{e^{-Au}}{1+u^2}$$
    hence
    $$\left|\frac{\mcB[f](u)e^{Nu}}{1+e^u}\right| \le C_A' \frac{e^{-(A-N)u}}{1+u^2} $$
    which is integrable on $[0,\infty)$ because $A-N>1$. Thus $\mcS_-[f](N)$ exists.

    We then have that $c(x)=e^{i\pi x}f(x)\in\mcM$. Its phase split is
    $$c^{(+)}(x)=0,\qquad c^{(-)}(x)=f(x),$$
    which is allowed since the zero germ belongs to $\mcM^\flat$. Hence
    $$U_c(N)=(-1)^N\mcS_-[f](N).$$
    By exact inversion of $1+S$, we have
    $$\mcS_-[f](N) + \mcS_-[f](N+1) = f(N).$$
    So, we then get
    $$U_c(N) - U_c(N+1) = (-1)^N(\mcS_-[f](N) + \mcS_-[f](N+1)) = (-1)^Nf(N) = c(N).$$
    Finally, since $c(N)=(-1)^Nf(N)$, and we have
    $$\frac{|c(N)|}{|c(N-1)|}=\frac{|f(N)|}{|f(N-1)|}\to\infty,$$
    by \ref{shift sufficient ratio}, we have that $U_c$ is shift-nonresonant. Thus, $c(x)=e^{i\pi x}f(x)\in\mcM$ admits the canonical selector $U_c$, and $U_c$ is shift-nonresonant.

    To connect this to the associated series, take the recurrence presentation $a(n)= b(n)=1$, $\mu_n=f(n),$ and $ n_0=0.$
    Then the associated formal series is
    $$\sum_{n=0}^{\infty}(-1)^n f(n).$$
    Define $I_N:=(-1)^N U_c(N).$ Since $U_c(N)-U_c(N+1)=(-1)^Nf(N),$ we have $I_N+I_{N+1}=(-1)^N\bigl(U_c(N)-U_c(N+1)\bigr)=f(N).$ Thus $I_N$ is a solution of the recurrence $I_N+I_{N+1}=f(N)$. Also, $P(N,0)=(-1)^N,$  so the normalized tail is
    $$R_N=\frac{I_N}{P(N,0)}=U_c(N).$$
    Since $R_N-R_{N+1}=c(N)$ and $U_c$ is shift-nonresonant, the normalized tail is $\mcM$ admissible. Thus,
    $$\sum_{n=0}^\infty (-1)^n f(n)$$
    is $C$-summable.
\end{proof}

Thus the theorem gives a practical route to $C$-summability for rapidly growing alternating series. The vertical estimates produce the Bromwich transform and the selector, while the ratio condition verifies shift-nonresonance. The examples in the next section use this theorem exactly in this form.

We next compare $C$-summation with ordinary Borel-Laplace summation. This isn not a general equivalence theorem. The statement applies when the same Bromwich data which defines the recurrence selector also gives an ordinary Borel-Laplace integral.

The mechanism is simple. If $f(N)$ is reconstructed from $\Phi(u)=\mcB[f](u)$, then the Borel transform of $a_n=(-1)^nf(n)$ can be written as $\widehat A(t)=\int_{-\infty}^{\infty}\Phi(u)e^{-te^u}\,du.$ The Laplace integral then collapses to the same kernel which appears in the alternating selector: $\int_0^\infty e^{-t(1+e^u)}\,dt=\frac{1}{1+e^u}.$ Thus the two methods agree because they use the same Bromwich data and the same resolvent kernel.

\begin{thm}
Let $f\in\mcM^\flat$ satisfy the hypotheses of \ref{class theorem alt}, and write $\Phi(u):=\mcB[f](u)$. Assume moreover that there exists $\rho>0$ such that $$\int_{-\infty}^{\infty}|\Phi(u)|e^{\rho e^u}\,du<\infty.$$ Define $a_n:=(-1)^n f(n)$ and let $\widehat A(t):=\sum_{n=0}^\infty a_nt^n/n!$ be the ordinary Borel transform.
Then $\widehat A$ is represented near $t=0$ by $$\widehat A(t)=\int_{-\infty}^{\infty}\Phi(u)e^{-te^u}\,du,$$ this representation defines the Borel-Laplace integral on the positive real axis, and
$$ C\left(\sum_{n=0}^\infty a_n\right) = \int_0^\infty e^{-t}\widehat A(t)\,dt.
$$
\end{thm}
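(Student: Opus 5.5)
The plan is to make the heuristic in the paragraph before the statement rigorous, using the exponential-integrability hypothesis to justify every interchange of integrals.

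\textbf{Step 1: the Borel transform.} From the proof of Theorem~\ref{class theorem alt} we have the reconstruction $f(n)=\int_{-\infty}^{\infty}\Phi(u)e^{nu}\,du$ for every integer $n\ge 0$, with absolute convergence. Then
$$\widehat A(t)=\sum_{n\ge0}\frac{(-1)^n t^n}{n!}\int_{-\infty}^{\infty}\Phi(u)e^{nu}\,du .$$
We exchange sum and integral by Fubini--Tonelli. For $|t|<\rho$ the dominating function is
$$\sum_{n\ge 0}\frac{|t|^n}{n!}|\Phi(u)|e^{nu}=|\Phi(u)|e^{|t|e^u},$$
which is integrable by hypothesis. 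Summing the exponential series gives $\widehat A(t)=\int\Phi(u)e^{-te^u}\,du$ for $|t|<\rho$.

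\textbf{Step 2: continuation along the positive axis.} For real $t\ge0$, and more generally $\Re t\ge 0$, we have $|e^{-te^u}|\le 1$. Hence $F(t):=\int\Phi(u)e^{-te^u}\,du$ is dominated by $|\Phi|$. We need $\Phi\in L^1$, which follows from the hypothesis since $e^{\rho e^u}\ge1$. So $F$ is defined and bounded on $\Re t\ge0$. It is also holomorphic on the open set $\{|t|<\rho\}\cup\{\Re t>0\}$, by Morera's theorem or by differentiating under the integral. One subtlety is that differentiating in $t$ brings down a factor $e^u$. For $\Re t\ge\delta>0$ this factor is killed by $e^{-\delta e^u}$, and near $0$ it is controlled by $e^{\rho e^u}$. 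Thus $F$ is the analytic continuation of $\widehat A$ to a neighborhood of $[0,\infty)$, and by uniqueness of continuation this representation is the one used in the Borel--Laplace integral.

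\textbf{Step 3: the Laplace integral.} We compute
$$\int_0^\infty e^{-t}F(t)\,dt=\int_0^\infty\int_{-\infty}^{\infty}\Phi(u)e^{-t(1+e^u)}\,du\,dt .$$
Tonelli applies because $\int\!\!\int|\Phi(u)|e^{-t(1+e^u)}\,du\,dt=\int|\Phi(u)|(1+e^u)^{-1}du\le\|\Phi\|_1$. Swapping the order and using $\int_0^\infty e^{-t(1+e^u)}\,dt=(1+e^u)^{-1}$ gives
$$\int_0^\infty e^{-t}F(t)\,dt=\int\frac{\Phi(u)}{1+e^u}\,du=\mcS_-[f](0).$$

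\textbf{Step 4: identifying the $C$-sum.} In the proof of Theorem~\ref{class theorem alt} the presentation is $a=b=1$, $\mu_n=f(n)$, $n_0=0$, with the solution $I_N=(-1)^NU_c(N)$ and normalized tail $R_N=U_c(N)$. Here $U_c$ is exactly the canonical selector of the tail-difference germ $c$. So $R_N-U_c(N)=0$ for all large $N$, and hence $\FP(R)=0$. Therefore
$$C\Bigl(\sum_n a_n\Bigr)=a(0)I_0-0=U_c(0)=\mcS_-[f](0).$$

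The only possible gap is that the selector identity is stated only for sufficiently large $N$. However, $\mcS_-[f](N)$ exists for all $N\ge0$, and Proposition~\ref{prop:exact inversion minus} holds for every $N$ at which the reconstruction formula holds, which is all $N\ge0$. So $R_N=U_c(N)$ holds exactly for all $N$, not just eventually, and evaluating at $N=0$ is legitimate. Comparing with Step 3 gives the result.

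The main obstacle I expect is the analytic-continuation bookkeeping in Step 2, namely checking that $F$ is holomorphic on an open neighborhood of $[0,\infty)$. Near $t=0$ this relies on the $e^{\rho e^u}$ weight. Away from $0$ it relies on the decay of $e^{-\Re t\, e^u}$ as $u\to+\infty$, while for $u\to-\infty$ it uses $\Phi\in L^1$ together with $e^u\to0$.
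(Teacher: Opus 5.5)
Your proposal is correct and follows the paper's proof essentially step for step. You identify the $C$-sum as $I_0=\mcS_-[f](0)$ because the selected normalized tail coincides with the canonical selector, so $\FP(R)=0$. You then interchange sum and integral for $|t|<\rho$ using the $e^{\rho e^u}$ weight, and apply Fubini with $\int_0^\infty e^{-t(1+e^u)}\,dt=(1+e^u)^{-1}$. Two of your checks go beyond what the paper's proof states explicitly: the holomorphy of $F$ on $\{|t|<\rho\}\cup\{\Re t>0\}$, which the paper's definition of the Borel sum requires, and the remark that the recurrence holds for all $N\ge 0$, which justifies evaluating at $N=0$.
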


\begin{proof}
By the preceding class theorem, the alternating series
$\sum_{n=0}^\infty (-1)^nf(n)=\sum_{n=0}^\infty a_n$ is $C$-summable. The recurrence is
$I_N+I_{N+1}=f(N)$, with selected solution
$$
I_N=\mcS_-[f](N)
=
\int_{-\infty}^{\infty}\frac{\Phi(u)e^{Nu}}{1+e^u}\,du.
$$
For this recurrence, the normalized tail is $R_N=(-1)^NI_N$. If
$c(x)=e^{i\pi x}f(x)$, then $c^{(+)}=0$ and $c^{(-)}=f$, so the canonical selector is
$U_c(N)=(-1)^N\mcS_-[f](N)=R_N$. Hence $\FP(R)=0$, and therefore the $C$-sum is
$I_0$.

It remains to identify $I_0$ with the Borel-Laplace integral. For $|t|<\rho$, the exponential integrability assumption allows us to interchange the sum and integral:
$$
\widehat A(t)
=
\sum_{n=0}^\infty \frac{(-1)^nt^n}{n!}
\int_{-\infty}^{\infty}\Phi(u)e^{nu}\,du
=
\int_{-\infty}^{\infty}\Phi(u)e^{-te^u}\,du.
$$
For $t\ge 0$, this integral is still defined since $\Phi\in L^1$ and
$|e^{-te^u}|\le 1$.

Finally, the existence of $\mcS_-[f](0)$ gives
$$
\int_{-\infty}^{\infty}\frac{|\Phi(u)|}{1+e^u}\,du<\infty.
$$
Thus Fubini applies, and using
$\int_0^\infty e^{-t(1+e^u)}\,dt=(1+e^u)^{-1}$, we obtain
$$
\int_0^\infty e^{-t}\widehat A(t)\,dt
=
\int_{-\infty}^{\infty}\frac{\Phi(u)}{1+e^u}\,du
=
I_0.
$$
Since the $C$-sum is also $I_0$, the two values agree.
\end{proof}

This theorem explains the agreement with Borel summation in the ordinary Laplace-convergent case. The principal-value renormalon examples below are different: there the Borel transform has poles on the positive real axis, so the ordinary Laplace integral must be replaced by a principal-value prescription.

The remaining sections use these criteria in concrete examples. The Gevrey and ultra-rapid alternating examples use the first theorem: the Bromwich estimates give admissibility, and the rapid-growth ratio gives shift-nonresonance. The Borel comparison theorem explains why, in the ordinary Laplace-convergent case, the $C$-sum agrees with the Borel value. The finite-pole renormalon examples then show how the same selector framework handles principal-value singularities.

\section{Classical Examples} \label{classical examples}

We now turn from the admissibility criteria to concrete examples. The first family is the alternating factorial series $\sum_{n=0}^{\infty}(-1)^n(kn)!a^n.$ This is a useful test case because the order of growth changes with $k$. For $k=1$, the coefficients have ordinary factorial growth, while larger values of $k$ give higher Gevrey-type growth. In many classical approaches, this change in growth requires changing the transform or the summation kernel. In the recurrence-based construction, the same first-order recurrence and the same alternating selector apply for every $k$.

The example also illustrates the role of the finite part in a familiar setting. The recurrence produces a natural tail $I_N$. After normalization, the canonical selector coincides with this normalized tail, so the finite part is zero and the assigned value is the initial selected tail $I_0$.

The proof is direct. We first construct a solution of the recurrence by an integral formula, then identify the corresponding Bromwich density. The alternating selector recovers the same integral tail, so the finite-part extraction leaves no remaining constant correction.

\begin{thm}
Fix an integer $k\ge 1$, and let $a>0$. Then, when interpreted through $C$-summation,
$$
\sum_{n=0}^\infty (-1)^n (kn)! a^n
=
\int_0^\infty \frac{e^{-x}}{1+ax^k}\,dx
$$
\end{thm}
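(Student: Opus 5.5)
We work with the presentation $a(n)=b(n)=1$, $n_0=0$, $\mu_n=f(n):=(kn)!\,a^n$, whose associated series is $\sum_{n\ge0}(-1)^n(kn)!a^n$. For this presentation $P(N,0)=(-1)^N$. The plan follows the text before the statement: write down an explicit integral solution of the recurrence, read off its Bromwich density, recognize the normalized tail as the canonical selector, and conclude that the finite part vanishes.

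\emph{Step 1 (explicit solution).} Define
$$
I_N:=\int_0^\infty \frac{e^{-x}(ax^k)^N}{1+ax^k}\,dx .
$$
It converges for every $N\ge0$. Since $\frac{y^N}{1+y}+\frac{y^{N+1}}{1+y}=y^N$, we get
$$
I_N+I_{N+1}=\int_0^\infty e^{-x}a^Nx^{kN}\,dx=(kN)!\,a^N=f(N),
$$
so $(I_N)$ solves the recurrence. Its normalized tail is $R_N=(-1)^NI_N$, with differences $R_N-R_{N+1}=(-1)^Nf(N)=c(N)$, where $c(x)=e^{i\pi x}f(x)$ and $f(x)=a^x\Gamma(kx+1)$. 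This $f$ is a gamma-type growth monomial with $\lambda=\log a$, $H=1$, $\kappa=k$, so $c\in\mcM$ with $c^{(+)}=0$ and $c^{(-)}=f$.

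\emph{Step 2 (Bromwich density).} Substitute $x=e^{s}$, then put $u=\log a+ks$. The integral $\int_0^\infty e^{-x}a^Nx^{kN}\,dx$ becomes
$$
f(N)=\int_{-\infty}^{\infty}\Phi(u)e^{Nu}\,du,\qquad \Phi(u):=\frac1k\exp\!\Bigl(\frac{u-\log a}{k}-e^{(u-\log a)/k}\Bigr).
$$
Then
$$
\int\Phi(u)e^{zu}\,du=a^z\Gamma(kz+1)
$$
for $\Re z>-1/k$. So $a^z\Gamma(kz+1)$ is a two-sided Laplace transform of $\Phi$. Because $\Phi$ is smooth and rapidly decaying, Fourier inversion on a vertical line $\Re z=\sigma>-1/k$ gives $\mcB[f]=\Phi$, independently of $\sigma$. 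Rather than rely on the general class theorem, I would check Bromwich-admissibility directly from this. Its vertical decay hypotheses are awkward here, since $|\Gamma(\sigma+it)|$ decays exponentially in $|t|$, while holomorphy holds only on $\Re z>-1/k$.

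The three admissibility conditions then hold:
\begin{enumerate}
\item absolute convergence of the Bromwich integral, from the exponential vertical decay of $\Gamma$;
\item independence of $\sigma$, by the same Cauchy rectangle argument as in Theorem~\ref{class theorem alt};
\item reconstruction of $f(N)$, which is the displayed identity above.
\end{enumerate}
The selector
$$
\mcS_-[f](N)=\int\frac{\Phi(u)e^{Nu}}{1+e^u}\,du
$$
converges absolutely because $\Phi$ decays doubly exponentially as $u\to+\infty$ and like $e^{u/k}$ as $u\to-\infty$. Undoing the substitution shows $\mcS_-[f](N)=I_N$. Hence $U_c(N)=(-1)^N\mcS_-[f](N)=R_N$.

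\emph{Step 3 (shift-nonresonance and conclusion).} The ratio $|f(N)|/|f(N-1)|=a\,(kN)!/(kN-k)!$ tends to $\infty$, so Lemma~\ref{shift sufficient ratio} shows $U_c$ is shift-nonresonant. Therefore $R$ is $\mcM$-admissible. Since $R_N-U_c(N)\equiv0$, we get $\FP(R)=0$, and the $C$-sum is
$$
a(0)I_0=\int_0^\infty\frac{e^{-x}}{1+ax^k}\,dx .
$$

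The main obstacle is Step 2: justifying that the canonical-continuation Bromwich integral equals $\Phi$ exactly and independently of $\sigma$. I would handle this by uniqueness of Fourier transforms of $L^1$ functions, applied to $t\mapsto\Phi(u)e^{\sigma u}$, together with Stirling's estimate $|\Gamma(\sigma+it)|\sim\sqrt{2\pi}\,|t|^{\sigma-1/2}e^{-\pi|t|/2}$ to secure absolute convergence on each line.
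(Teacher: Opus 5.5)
Your proposal is correct and follows the paper's proof essentially step for step: the same integral solution $I_N$, the same Bromwich density obtained from the substitution $e^u=ax^k$, and the identification $U_c=R$, hence $\FP(R)=0$ and the value $I_0$. The deviations are minor: you justify $\mcB[f]=\Phi$ explicitly by Fourier inversion and Stirling decay, where the paper reads the density off the integral representation, and you get shift-nonresonance from Lemma~\ref{shift sufficient ratio}, where the paper uses positivity, $|R_N-R_{N+1}|=I_N+I_{N+1}\ge I_N=|R_N|$; separately, your reason for avoiding Theorem~\ref{class theorem alt} is mistaken (exponential vertical decay of $\Gamma$ and its derivatives beats $(1+|t|)^{-3}$, and $\Re z>-1/k$ is a neighborhood of $\Re z\ge 0$), but nothing in your argument depends on it.
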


\begin{proof}
Let
$$
I_N=\int_0^\infty \frac{(ax^{k})^Ne^{-x}}{1+ax^k}\,dx.
$$
Then $I_N+I_{N+1}=(kN)!a^N$ for every $N\ge 0$, so $I_N=(-1)^N R_N$ and
$$
R_N-R_{N+1}=(-1)^N(I_N+I_{N+1})=(-1)^N (kN)!a^N.
$$
Define $c(x):=e^{i\pi x}a^x\Gamma(kx+1)\in\mcM$. Then
$c(N)=(-1)^N (kN)! a^N=R_N-R_{N+1}$ for every integer $N\ge 0$, and the phase
split of $c$ is $c^{(+)}(x)=0$ and $c^{(-)}(x)=a^x\Gamma(kx+1)$.

The next step is to show that this tail is exactly the one selected by the Bromwich construction. For this, we compute the Bromwich representation of the phase-free part $a^x\Gamma(kx+1)$.
For $\Re z>-1/k$,
$a^z\Gamma(kz+1)=\int_0^\infty e^{-x}(ax^{k})^z\,dx$. With the change of
variables $e^u=ax^k$, so that $x=a^{-1/k}e^{u/k}$ and
$dx=\frac{1}{k}a^{-1/k}e^{u/k}\,du$, this becomes
$$
a^z\Gamma(kz+1)
=
\int_{-\infty}^{\infty}
\frac{1}{k}a^{-1/k} e^{-a^{-1/k}e^{u/k}}e^{u/k}e^{zu}\,du.
$$
Comparing this with the Bromwich representation gives
$$
\mcB[c^{(-)}](u)=\frac{1}{k}a^{-1/k} e^{-a^{-1/k}e^{u/k}}e^{u/k}.
$$
Therefore
$\mcS_-[c^{(-)}](N)=\int_{-\infty}^{\infty}
\frac{\mcB[c^{(-)}](u)e^{Nu}}{1+e^u}\,du$. Changing variables back by
$e^u=ax^k$ gives
$$
\mcS_-[c^{(-)}](N)
=
\int_0^\infty \frac{e^{-x}(ax^{k})^N}{1+ax^k}\,dx
=
I_N.
$$

Since $c^{(+)}=0$, the canonical selector attached to $c$ is
$U_c(N)=(-1)^N\mcS_-[c^{(-)}](N)=(-1)^N I_N=R_N$. Thus
$R_N-U_c(N)=0$ for every $N$, so $\FP(R)=0$. It remains only to check that the selected tail satisfies the shift-nonresonance condition required for admissibility. In this example this is immediate from positivity of the integral tail. Since $a>0$, we have $I_N>0$ for every $N$, and hence
$$
|R_N-R_{N+1}|=I_N+I_{N+1}\ge I_N=|R_N|
$$
for every $N\ge 0$. Hence $U_c=R$ is shift-nonresonant with $\varepsilon=1$.
Therefore $R$ is $\mcM$-admissible, and since $\FP(R)=0$, the $C$-sum assigned
by the recurrence is $I_0$, namely
$$
\int_0^\infty \frac{e^{-x}}{1+ax^k}\,dx,
$$
as claimed.
\end{proof}

This example shows that the recurrence-based construction treats the whole family $(kn)!a^n$ uniformly. Increasing $k$ changes the growth scale of the coefficients and changes the Bromwich density, but it does not change the underlying selector mechanism. The same recurrence and the same kernel $(1+e^u)^{-1}$ produce the value for every $k$.

\section{Fast Growing Series} \label{fast grow}

The preceding examples show that the recurrence method handles factorial and higher Gevrey-type growth in a uniform way. We now consider a different kind of growth. The coefficients $e^{n^2/2}$
grow faster than $(kn)!$ for every fixed integer $k$, so this series is not naturally part of the factorial hierarchy treated above.

This example is important because it separates two ideas which are often tied together in classical summation methods: the size of the coefficients on the real axis and the regularity of the transformed data. Although $e^{n^2/2}$ grows extremely quickly, the analytic germ $e^{x^2/2}$ has a simple Bromwich representation with Gaussian density. Thus the tail difference has very large integer values, but its Bromwich data is smooth and rapidly decaying.

The example therefore shows that $C$-summability is not governed only by the real-axis growth rate of the coefficients. What matters is whether the tail-difference germ admits a canonical selector. 

The following theorem gives the ultra-rapid example. The recurrence
tail is a lognormal moment integral, and the alternating selector recovers this tail exactly.

\begin{thm}
When interpreted through $C$-summation,
$$
\sum_{n=0}^\infty (-1)^n e^{n^2/2}=\frac{1}{2}
$$
\end{thm}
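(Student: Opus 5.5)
We take the recurrence presentation $a(n)=b(n)=1$, $\mu_n=e^{n^2/2}$, $n_0=0$, so that $P(N,0)=(-1)^N$. The associated formal series is $\sum_{n\ge0}(-1)^n e^{n^2/2}$, and the normalized tail is $R_N=(-1)^N I_N$. The plan follows the factorial example of Section~\ref{classical examples}. First we find the Bromwich density of the phase-free germ $f(x)=e^{x^2/2}$. Then we show that the alternating selector produces an explicit solution $I_N$ of the recurrence whose normalized tail coincides with the canonical selector, so that $\FP(R)=0$. Finally we compute $I_0$ by a symmetry argument.

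\emph{Step 1: $f$ is an admissible germ.} The function $f(x)=e^{x^2/2}$ lies in $\mcM^\flat$. It has the form $\exp(\Phi)$ with $\Phi=\tfrac12 x^2$, where $x^2\in\mcP$, $x^2\to+\infty$, and $x^2$ is not an iterated logarithm. It is also not gamma-resonant: $\log$ of a gamma-type monomial is $\kappa x\log x+O(x)+o(x)$, which is $o(x^2)$, so no ratio of the required kind can tend to a positive constant. Its canonical continuation $e^{z^2/2}$ is entire. On the line $z=\sigma+it$,
$$
|e^{z^2/2}|=e^{(\sigma^2-t^2)/2}.
$$
This is Gaussian decay in $t$, uniform for $0\le\sigma\le A$, and the same holds for the $t$-derivatives. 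Hence the hypotheses of Theorem~\ref{class theorem alt} hold. The ratio hypothesis also holds, since
$$
\frac{f(N+1)}{f(N)}=e^{N+1/2}\to\infty.
$$
Theorem~\ref{class theorem alt} therefore already gives $C$-summability. The remaining work is to identify the value.

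\emph{Step 2: explicit Bromwich density and selector.} The Gaussian moment identity
$$
e^{z^2/2}=\int_{-\infty}^{\infty}\frac{e^{-u^2/2}}{\sqrt{2\pi}}\,e^{zu}\,du
$$
holds for all $z$, by completing the square. Bromwich inversion (Theorem~\ref{Bromwich Inversion Theorem}) together with uniqueness of the Fourier transform then gives
$$
\mcB[f](u)=\varphi(u):=\frac{e^{-u^2/2}}{\sqrt{2\pi}}.
$$
This can also be checked directly by evaluating the vertical-line integral, which is itself Gaussian. Set
$$
I_N:=\mcS_-[f](N)=\int_{-\infty}^{\infty}\frac{\varphi(u)e^{Nu}}{1+e^u}\,du.
$$
This integral converges absolutely because $\varphi$ kills every exponential; it is the lognormal moment integral. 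By Proposition~\ref{prop:exact inversion minus}, $I_N+I_{N+1}=e^{N^2/2}$, so $(I_N)$ solves the recurrence.

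\emph{Step 3: finite part and shift-nonresonance.} With $c(x)=e^{i\pi x}f(x)$, the phase split is $c^{(+)}=0$, $c^{(-)}=f$. Hence the canonical selector is
$$
U_c(N)=(-1)^N\mcS_-[f](N)=(-1)^N I_N=R_N,
$$
so $R_N-U_c(N)=0$ and $\FP(R)=0$. Shift-nonresonance follows exactly as in the factorial example. Since $I_N>0$,
$$
|R_N-R_{N+1}|=I_N+I_{N+1}\ge |R_N|,
$$
which is the second alternative with $\varepsilon=1$. Alternatively one may invoke Lemma~\ref{shift sufficient ratio}. Thus $R$ is $\mcM$-admissible, and the $C$-sum is $a(0)I_0-\FP(R)=I_0$.

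\emph{Step 4: evaluating $I_0$.} This is the only genuinely new point. Substituting $u\mapsto -u$ and using that $\varphi$ is even gives
$$
I_0=\int_{-\infty}^{\infty}\frac{\varphi(u)}{1+e^{-u}}\,du.
$$
Averaging this with the original expression and using
$$
\frac{1}{1+e^u}+\frac{1}{1+e^{-u}}=1
$$
yields $2I_0=\int\varphi=1$, so $I_0=\tfrac12$.

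The step needing the most care is Step 1, namely checking that $e^{x^2/2}$ is a legitimate, non-gamma-resonant growth monomial and that the Gaussian vertical bounds meet the hypotheses of Theorem~\ref{class theorem alt}. Everything after that is an explicit Gaussian computation.
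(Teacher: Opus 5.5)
Your proposal is correct and follows the paper's proof essentially step for step: the same presentation $a=b=1$, $\mu_n=e^{n^2/2}$, the Gaussian Bromwich density $\tfrac{1}{\sqrt{2\pi}}e^{-u^2/2}$, the alternating selector coinciding with the normalized tail so that $\FP(R)=0$, and shift-nonresonance from positivity of $I_N$. Your extra pieces are sound and fill in details the paper leaves implicit: the check that $e^{x^2/2}$ is a non-gamma-resonant growth monomial, and the $u\mapsto -u$ symmetry argument showing $I_0=\tfrac12$, a value the paper simply asserts.
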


\begin{proof}
Set
$$
I_n:=\int_0^\infty \frac{x^n}{1+x}\frac{1}{x\sqrt{2\pi}}e^{-(\log x)^2/2}\,dx
$$
Then $ I_N+I_{N+1} = e^{N^2/2}$, which means that $R_N:=(-1)^N I_N$. For every $N\ge 0$, 
$$
R_N-R_{N+1}
=
(-1)^N(I_N+I_{N+1})
=
(-1)^N e^{N^2/2}
$$

Next, define $c(x):=e^{i\pi x}e^{x^2/2}\in\mcM$. Then $$c(N)=(-1)^N e^{N^2/2}=R_N-R_{N+1}$$ for every integer $N\ge 0$. The phase split of $c$ is $ c^{(+)}(x)=0$ and $c^{(-)}(x)=e^{x^2/2}$. 
Now, we have that $$ e^{z^2/2} = \frac{1}{\sqrt{2\pi}}\int_{-\infty}^{\infty}e^{-u^2/2}e^{zu}\,du$$ for every $z\in\CC$. Hence, by the standard Bromwich inversion theorem,
$c^{(-)}$ is Bromwich-admissible and
$$\mcB[c^{(-)}](u)=\frac{1}{\sqrt{2\pi}}e^{-u^2/2}$$
Therefore
$$\mcS_-[c^{(-)}](N) = \int_{-\infty}^{\infty} \frac{\mcB[c^{(-)}](u)e^{Nu}}{1+e^u}\,du = \frac{1}{\sqrt{2\pi}} \int_{-\infty}^{\infty}
\frac{e^{-u^2/2}e^{Nu}}{1+e^u}\,du$$
With the change of variables $x=e^u$, this becomes
$$
\mcS_-[c^{(-)}](N)
=
\int_0^\infty \frac{x^N}{1+x}\frac{1}{x\sqrt{2\pi}}e^{-(\log x)^2/2}\,dx
=
I_N
$$

Since $c^{(+)}=0$, the canonical selector attached to $c$ is
$$
U_c(N)=(-1)^N\mcS_-[c^{(-)}](N)=(-1)^N I_N=R_N
$$
Thus
$$
R_N-U_c(N)=0
$$
for every $N$, so
$$
\FP(R)=0
$$
It remains only to check the admissibility condition not supplied by the Bromwich calculation, namely shift-nonresonance. In this example the selector equals the normalized tail, and positivity of the integral tail gives the condition directly. Since $I_N>0$ for every $N$,
$$
|R_N-R_{N+1}|=I_N+I_{N+1}\ge I_N=|R_N|
$$
for every $N\ge 0$. Since $U_c = R_N$ on integers, we have that we $U_c$ is shift-nonresonant with $\varepsilon=1$.

Therefore $R_N$ is $\mcM$-admissible, and since $\FP(R)=0$, the $C$-sum is
$$\sum_{n=0}^\infty (-1)^n e^{n^2/2} = \int_0^\infty \frac{1}{1+x}\cdot \frac{1}{\sqrt{2\pi}}\cdot \frac{1}{x} \cdot e^{-\log(x)^2/2} dx = \frac{1}{2}$$

as claimed.
\end{proof}

This example is qualitatively different from the factorial family. The
coefficient sequence $e^{n^2/2}$ grows faster than $(kn)!$ for every fixed $k$, but the method still gives a direct value because the Bromwich side is well behaved. This illustrates a central feature of $C$-summation: admissibility is not governed only by the size of the coefficients on the real axis. Very fast growth can still be summable when the associated Bromwich data has the right structure.

The same Gaussian structure also suggests a useful limiting idea. Multiplication by $e^{a x^2}$ accelerates growth on the real axis, but on the Bromwich side it acts in the opposite way: it convolves the Bromwich data with a Gaussian kernel. Thus Gaussian acceleration can turn singular or atomic Bromwich data into an ordinary density. After computing the $C$-sum of the accelerated series, one may then let $a\to0^+$, producing an Abel-type deceleration limit. The next example should therefore not be read as an ordinary $C$-sum of the
original series. Rather, it shows how the $C$-summable Gaussian-accelerated series can approach a limiting value for a series whose unaccelerated Bromwich data is not an ordinary function.

Consider $S=\sum_{n=0}^{\infty}(-1)^n e^{e^n}.$ The germ $e^{e^x}$ has the formal expansion $e^{e^x}=\sum_{k=0}^{\infty}\frac{e^{kx}}{k!}.$ Thus its Bromwich data would be a sum of point masses rather than an ordinary density. This places $S$ outside the ordinary Bromwich framework used in this paper. Gaussian acceleration replaces each point mass by a Gaussian packet, so the accelerated series becomes ordinary $C$-summable for every $a>0$.
\begin{ex}
    Consider the series $$S=\sum_{n=0}^\infty (-1)^ne^{e^n}$$ This series is not $C$-summable, since the Bromwich transform of $e^{i\pi x}e^{e^x}$ does not exist. However, we can now accelerate the series. Now fix $a>0$. Define $$S(a) = \sum_{n=0}^\infty (-1)^ne^{an^2 + e^n}$$. We now need to find the $C$-sum of this series. We know that $e^{e^x} = \sum_{k=0}^\infty \frac{e^{kx}}{k!}$. Thus, $e^{ax^2+e^x} = \sum_{k=0}^{\infty} \frac{e^{ax^2 +kx}}{k!}$. Also note that $$e^{ax^2+kx} = \int_{-\infty}^\infty \frac{1}{\sqrt{4\pi a}} \exp {\left( -\frac{(u-k)^2}{4a} \right)} e^{xu} du$$ Then,  $e^{ax^2+e^x} = \int_{-\infty}^\infty \left(\sum_{k=0}^\infty \frac{1}{k!} \frac{1}{\sqrt{4\pi a}} \exp{\left( -\frac{(u-k)^2}{4a} \right)} \right)e^{ux} du$. This means that $$\mcB[c^{(-)}](u) = \sum_{k=0}^\infty \frac{1}{k!} \frac{1}{\sqrt{4\pi a}} \exp{\left( -\frac{(u-k)^2}{4a} \right)} $$ Thus, the $C$-sum of $S(a)$ is $$\int_{-\infty}^\infty \sum_{k=0}^\infty \frac{1}{k!} \frac{1}{\sqrt{4\pi a}} \exp{\left( -\frac{(u-k)^2}{4a} \right)} \frac{1}{1+e^u} du $$ Now we will decelerate the series. We will do this by taking $a\to 0^+$ $$\lim_{a\to 0^+} C\bigg(\sum_{n=0}^\infty (-1)^ne^{an^2 + e^n} \bigg) = \lim_{a\to  0^+}\int_{-\infty}^\infty \sum_{k=0}^\infty \frac{1}{k!} \frac{1}{\sqrt{4\pi a}} \exp{\left( -\frac{(u-k)^2}{4a} \right)} \frac{1}{1+e^u} du  $$ Note that as $a \to 0^+$, we have $\frac{1}{\sqrt{4\pi a}} \exp{\left( -\frac{(u-k)^2}{4a} \right)} \to \delta(u-k)$. This means the decelerated value becomes $$\lim_{a\to 0^+} C\bigg(\sum_{n=0}^\infty (-1)^ne^{an^2 + e^n} \bigg) = \sum_{k=0}^\infty \frac{1}{k!}\int_{-\infty}^\infty \frac{\delta(u-k)}{1+e^u}du = \sum_{k=0}^\infty \frac{1}{k!(1+e^k)} $$ Thus $S$ is not directly $C$-summable in the ordinary Bromwich framework. However, the accelerated series are genuinely $C$-summable for every $a>0$, and the limit as $a\to0^+$ gives a natural Abel-type value associated to $S$: $\sum_{k=0}^{\infty}\frac{1}{k!(1+e^k)}.$ This limiting procedure is useful because it does not suppress growth in the usual Borel sense. Instead, it increases real-axis growth while smoothing the Bromwich data, and then removes the smoothing by an Abel-type limit.

\end{ex}

\section{Applications to Renormalons} \label{renormalon}

We now turn to a setting where the obstruction is not only rapid growth, but ambiguity of the summation prescription. For factorially divergent series, ordinary Borel summation first removes the $n!$-growth. However, if the Borel transform has singularities on the positive real axis, the Laplace integral runs directly into those singularities. The question is then not merely whether the integral converges, but how the singular direction should be interpreted.

Renormalon-type series give a simple model of this issue. Their coefficients have factorial growth, and their Borel transforms have poles on the natural summation ray. The purpose of this section is to show that the recurrence finite-part construction selects the Cauchy principal value in this situation. The principal value is not introduced as an external contour prescription. Instead, it appears from the non-oscillatory selector $\mcS_+[a](N)$. The pole of the kernel at $u=0$ becomes, after a change of variables, the pole
of the renormalon-type integral on the positive real axis.

The one-pole model already contains the essential mechanism. The coefficient germ $a^x\Gamma(x+1)$ has an ordinary Bromwich density, while the selector kernel $(1-e^u)^{-1}$ contributes a principal-value singularity. Under the change of variables $e^u=ax$, this singularity is carried to the pole $x=1/a$. Thus the principal value in the final integral is the image of the principal value already present in the recurrence selector.

\begin{thm}
Fix $a>0$. Then, when interpreted through $C$-summation,
$$
\sum_{n=0}^\infty n!a^n
=
\frac{e^{-1/a}}{a}Ei\!\left(\frac{1}{a}\right)
$$
where the right-hand side is understood in the principal-value sense.
\end{thm}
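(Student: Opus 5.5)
We follow the same template as the factorial and Gaussian examples, now using the non-oscillatory selector. The series $\sum n!a^n$ comes from the recurrence $I_N - I_{N+1} = N!a^N$, i.e. the presentation $a(n)=1$, $b(n)=-1$, $\mu_n=n!a^n$, $n_0=0$. For this presentation $P(N,0)=1$, so $R_N=I_N$ and $c_N=R_N-R_{N+1}=N!a^N$. We define the coefficient germ
\[
c(x):=a^x\Gamma(x+1),
\]
a gamma-type growth monomial with $\lambda=\log a$, $H=1$, $\kappa=1$. Its phase split is $c^{(+)}=c$, $c^{(-)}=0$, so the selector is $U_c=\mathcal S_+[c]$.

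Next we compute the Bromwich density as in the $(kn)!a^n$ theorem with $k=1$. For $\Re z>-1$, substituting $e^u=ax$ in $a^z\Gamma(z+1)=\int_0^\infty e^{-x}(ax)^z\,dx$ gives
\[
\mcB[c](u)=a^{-1}e^{u}e^{-e^u/a}.
\]
We verify Bromwich-admissibility in one of two ways. One option is to cite the standard vertical-line inversion theorem (Theorem~\ref{Bromwich Inversion Theorem}), using Stirling decay of $\Gamma(\sigma+it)$ along vertical lines. The more direct option is to note that this is a Fourier--Laplace representation, whose inversion gives back the density. The reconstruction $c(N)=\int \mcB[c](u)e^{Nu}\,du$ is then immediate.

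Then we compute the selector:
\[
\mathcal S_+[c](N)=PV\!\int_{-\infty}^{\infty}\frac{a^{-1}e^{(N+1)u}e^{-e^u/a}}{1-e^u}\,du.
\]
Absolute convergence away from $u=0$ is clear: the integrand is doubly exponentially small as $u\to+\infty$ and like $e^{(N+1)u}$ as $u\to-\infty$. The principal value exists because the singularity at $u=0$ is a simple pole. Substituting $x=e^u/a$ turns this into
\[
U_c(N)=PV\!\int_0^\infty \frac{(ax)^N e^{-x}}{1-ax}\,dx.
\]
The symmetric excision $|u|>\varepsilon$ becomes the excision $x\in(e^{-\varepsilon}/a,\,e^{\varepsilon}/a)$, which is asymmetric at first order. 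We expect this to be the main technical point. We will check that the asymmetry contributes only $O(\varepsilon)$ after accounting for the residue: the interval is symmetric to first order about $1/a$ with a second-order offset, so the limit equals the standard Cauchy principal value. Taking $I_N:=U_c(N)$ gives a solution of the recurrence by Proposition~\ref{prop:exact inversion plus}. It can also be checked directly, since $(ax)^N-(ax)^{N+1}=(ax)^N(1-ax)$ cancels the denominator. Hence $R_N=U_c(N)$ and $\FP(R)=0$.

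Admissibility also requires shift-nonresonance of $U_c$. We obtain it from Lemma~\ref{shift sufficient ratio}, since $|c_N|/|c_{N-1}|=Na\to\infty$; this avoids needing any sign information on the principal-value integral. The $C$-sum is therefore
\[
I_0=PV\!\int_0^\infty\frac{e^{-x}}{1-ax}\,dx.
\]
The final step is the classical evaluation. Substituting $x=1/a-s/a$ (or $y=1-ax$) gives
\[
PV\!\int_0^\infty \frac{e^{-x}}{1-ax}\,dx=\frac{e^{-1/a}}{a}\,PV\!\int_{-\infty}^{1/a}\frac{e^{t}}{t}\,dt=\frac{e^{-1/a}}{a}\,\mathrm{Ei}(1/a),
\]
using the principal-value definition of $\mathrm{Ei}$.
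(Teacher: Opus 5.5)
Your proposal is correct and follows essentially the same route as the paper: the presentation $I_N-I_{N+1}=N!a^N$ with $P(N,0)=1$, the germ $c(x)=a^x\Gamma(x+1)$ with $c^{(-)}=0$, the density $a^{-1}e^{u}e^{-e^u/a}$, the identification $U_c=\mathcal{S}_+[c]=R_N$ giving $\mathrm{FP}_\infty(R)=0$, shift-nonresonance from the ratio $Na\to\infty$, and evaluation of $I_0$ as $\frac{e^{-1/a}}{a}\mathrm{Ei}(1/a)$. The differences are minor: you define $I_N$ as the selector rather than checking that a given $I_N$ equals it, and you explicitly verify that the symmetric excision $|u|>\varepsilon$ maps to an excision in $x$ whose radii have ratio $e^{\varepsilon}\to 1$, so the principal value survives the substitution $x=e^u/a$, a point the paper's proof passes over silently.
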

\begin{proof}
Set $$I_N:=PV\int_0^\infty \frac{(ax)^N e^{-x}}{1-ax}\,dx$$
Then $I_N-I_{N+1}=a^N N!$. We therefore have $R_N=I_N$, and $R_N-R_{N+1}=a^N N!$. Define
$$
c(x):=a^x\Gamma(x+1)\in\mcM.
$$
Then $c(N)=a^N N!=R_N-R_{N+1}$ for every integer $N\ge 0$. The phase split of $c$ is 
$$c^{(+)}(x)=a^x\Gamma(x+1),\qquad c^{(-)}(x)=0.$$

For $\Re z>-1$, we have $$a^z\Gamma(z+1)=\int_0^\infty e^{-x}(ax)^z\,dx.$$ With the change of variables $x=e^u/a$, $dx=\frac1a e^u\,du$, this becomes $$a^z\Gamma(z+1)=\int_{-\infty}^{\infty}\frac1a e^{-e^u/a}e^u e^{zu}\,du.$$

Hence, by the standard Bromwich inversion theorem, $c^{(+)}$ is
Bromwich-admissible and 
$$\mcB[c^{(+)}](u)=\frac1a e^{-e^u/a}e^u.$$ Therefore
$$\mcS_+[c^{(+)}](N)=PV\int_{-\infty}^{\infty} \frac{\mcB[c^{(+)}](u)e^{Nu}}{1-e^u}\,du= PV\int_{-\infty}^{\infty} \frac{\frac1a e^{-e^u/a}e^u e^{Nu}}{1-e^u}\,du.$$

Changing variables back by $x=e^u/a$, so that $e^u=ax$, gives $$\mcS_+[c^{(+)}](N)= PV\int_0^\infty \frac{e^{-x}(ax)^N}{1-ax}\,dx=I_N.$$

Since $c^{(-)}=0$, the canonical selector attached to $c$ is $$U_c(N)=\mcS_+[c^{(+)}](N)=I_N=R_N.$$
Thus $R_N-U_c(N)=0$ for every $N$, so $\FP(R)=0$. It remains to verify shift-nonresonance. We know that $$\frac{|c_{N}|}{|c_{N-1}|} = \frac{a^NN!}{a^{N-1}(N-1)!} = aN \to \infty$$

This shows that $U_c$ is shift-nonresonant by \ref{shift sufficient ratio}.

Therefore $R$ is $\mcM$-admissible, and since $\FP(R)=0$, the $C$-sum assigned
by the recurrence is $I_0$. Finally,
$$
I_0
=
PV\int_0^\infty \frac{e^{-x}}{1-ax}\,dx
=
\frac{e^{-1/a}}{a}Ei\!\left(\frac1a\right)
$$
with principal-value interpretation. Hence
$$
\sum_{n=0}^\infty n!a^n
=
\frac{e^{-1/a}}{a}Ei\!\left(\frac1a\right)
$$
in the sense of $C$-summation, as claimed.
\end{proof}

\begin{rem}
This example also exhibits the Stokes ambiguity associated with the selector. The Bromwich density is $\Phi(u)=\frac{1}{a}e^u e^{-e^u/a},$
and the non-oscillatory selector has kernel $(1-e^u)^{-1}$, which has a simple pole at $u=0$. Taking the contour above or below this pole gives two lateral selectors, whose difference is the residue contribution
$$
\mcS_+^{\uparrow}[c](N)-\mcS_+^{\downarrow}[c](N)
=
-2\pi i\operatorname*{Res}_{u=0}
\frac{\Phi(u)e^{Nu}}{1-e^u}
=
\frac{2\pi i}{a}e^{-1/a},
$$
up to the orientation convention. Since this difference is independent of $N$, it lies in the homogeneous solution space of the normalized first-order recurrence $R_N-R_{N+1}=c_N$. Thus the lateral Stokes ambiguity is precisely the homogeneous ambiguity isolated by the finite-part construction, while the principal-value selector is the symmetric choice between the two lateral selectors.
\end{rem}

The value obtained here agrees with the principal-value Borel prescription for this one-pole model. Thus the novelty is not the numerical value by itself. The point is that the principal value is selected by the same recurrence finite-part mechanism used throughout the paper. In ordinary Borel summation, once a pole lies on the positive real axis, one must add a prescription, such as a lateral sum or a principal value. Here the principal value arises from the canonical non-oscillatory selector attached to the tail difference.

This should be understood as a real symmetric value, not as a replacement for the full resurgent or lateral-summation data. Taking a principal value removes the imaginary ambiguity associated with going above or below the pole. The recurrence method is therefore not claiming to recover the Stokes information lost by this choice. Rather, it identifies the principal-value prescription as the value
naturally selected by the recurrence presentation and the finite-part
normalization.

The same mechanism extends to finite sums of simple renormalon poles. In this case the Borel-side expression has finitely many poles on the positive real axis, located at $t=1/r_j$. Throughout the next theorem, $PV\int_0^\infty$ means the Cauchy principal value taken at each of these real simple poles.

The finite-pole case also shows that the preceding calculation is not tied to a special one-pole identity. The selector is linear, so each pole contributes its own principal-value term, and the finite sum is handled by the same non-oscillatory kernel $(1-e^u)^{-1}$.

\begin{thm}
Let $r_1,\dots,r_J>0$ be distinct and let $d_1,\dots,d_J\in\CC$ be nonzero. Define $$a_n:=n!\sum_{j=1}^J d_j r_j^n.$$
Then the series $\sum_{n=0}^\infty a_n$ is $C$-summable. More precisely, its $C$-sum is $$PV\int_0^\infty e^{-t}\sum_{j=1}^J\frac{d_j}{1-r_jt}\,dt.$$
\end{thm}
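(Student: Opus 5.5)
I will follow the one-pole renormalon theorem, adding one term per pole and using linearity of the non-oscillatory selector. The recurrence presentation is $a(n)=1$, $b(n)=-1$, $\mu_n=a_n$, $n_0=0$. Then $P(N,0)=1$, the associated series is $\sum a_n$, and the normalized tail is $R_N=I_N$, with
$$R_N-R_{N+1}=a_N=N!\sum_{j=1}^J d_j r_j^N .$$
As the candidate solution I take
$$I_N:=PV\int_0^\infty e^{-t}\sum_{j=1}^J d_j\frac{(r_jt)^N}{1-r_jt}\,dt .$$
The identity $I_N-I_{N+1}=a_N$ follows termwise from $\frac{(r t)^N-(rt)^{N+1}}{1-rt}=(rt)^N$. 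The subtracted integrand has no pole, so the principal values cancel exactly and leave $\int_0^\infty e^{-t}(r_jt)^N\,dt=N!\,r_j^N$.

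Next comes the coefficient germ. I set $c(x):=\sum_j d_j r_j^x\Gamma(x+1)$. Each summand $e^{x\log r_j}\Gamma(x+1)$ is a gamma-type growth monomial with $\lambda_j=\log r_j$, $H=1$ and $\kappa=1$. The $\lambda_j$ are distinct, so by Lemma~\ref{Trichotomy of Growth Scales} these monomials are strictly ordered and $c\in\mcM^\flat$ in exact ordered form, with $c^{(+)}=c$ and $c^{(-)}=0$. For each $j$ the one-pole computation gives the Bromwich density $\Phi_j(u)=\frac{1}{r_j}e^{u}e^{-e^u/r_j}$. The convention $\mathcal B[a]$ requires the canonical continuation $r^z\Gamma(z+1)$ itself, so I will check Bromwich admissibility directly: for $\Re z>-1$ we have $\int\Phi_j(u)e^{zu}\,du=r_j^z\Gamma(z+1)$, and Stirling decay on vertical lines gives absolute convergence and independence of $\sigma$. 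This is the same justification used in the one-pole theorem. Linearity (Lemma~\ref{lem:B-linearity}) then gives $\mcB[c]=\sum_j d_j\Phi_j$.

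I then compute the selector
$$\mcS_+[c](N)=PV\int_{-\infty}^\infty\frac{\sum_j d_j\Phi_j(u)e^{Nu}}{1-e^u}\,du .$$
I expect this to be the main obstacle. Each $\Phi_j$ is smooth at $u=0$, so the principal value at the single kernel pole $u=0$ exists for each term separately. The change of variables $e^u=r_jt$ in the $j$-th term turns this pole into the pole $t=1/r_j$, and the symmetric excision $|u|>\varepsilon$ becomes the asymmetric excision $t\notin[e^{-\varepsilon}/r_j,e^{\varepsilon}/r_j]$. Since the residue density is smooth, an asymmetric excision whose endpoint ratios tend to $1$ at first order yields the same limit as the symmetric $PV$; the correction is $\log(e^{\varepsilon}/e^{\varepsilon})=0$ to leading order and $o(1)$ after that. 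I will state this as a short lemma. The other poles $1/r_k$ with $k\ne j$ never appear in the $j$-th term, so in the $t$-variable the sum of terms is exactly the multi-pole $PV$ defining $I_N$. Hence $U_c(N)=\mcS_+[c](N)=I_N=R_N$ for all $N$, and $\FP(R)=0$.

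Finally I need shift-nonresonance and the value. When $\sum_j d_jr_j^N\neq 0$, the ratio $|a_N|/|a_{N-1}|$ is asymptotically $N r_{\max}$ provided the dominant coefficient $d_{j^*}$ of the largest $r_j$ is nonzero, which holds by hypothesis. So the ratio tends to infinity and Lemma~\ref{shift sufficient ratio} applies. Thus $R$ is $\mcM$-admissible, and the $C$-sum is $a(0)I_0-\FP(R)=I_0$, which is the stated principal-value integral.
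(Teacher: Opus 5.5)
Your proposal is correct and follows essentially the same route as the paper: the presentation $I_N-I_{N+1}=a_N$ with $P(N,0)=1$, the Bromwich density $\sum_j (d_j/r_j)e^{u}e^{-e^u/r_j}$, the identification $U_c=\mcS_+[c]=I_N=R_N$ giving $\FP(R)=0$, and the ratio lemma for shift-nonresonance using $d_{j^*}\neq 0$. Your extra lemma on the asymmetric excision produced by $e^u=r_jt$ covers a point the paper passes over silently, and it is right, but your formula $\log(e^{\varepsilon}/e^{\varepsilon})$ is not the correction. With $\delta_1=(1-e^{-\varepsilon})/r_j$ and $\delta_2=(e^{\varepsilon}-1)/r_j$, the discrepancy is the residue at $t=1/r_j$ times $\log(\delta_1/\delta_2)=-\varepsilon$, up to $o(1)$, and this tends to $0$.
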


\begin{proof}
Set
$$f(x):=\Gamma(x+1)\sum_{j=1}^J d_j r_j^x.$$ Then $a_n=f(n)$, and $f\in\mcM^\flat$ since each $r_j^x\Gamma(x+1)$ is a
gamma-type growth monomial. We use the recurrence presentation
$$I_N-I_{N+1}=f(N).$$ For each $j$, define $f_j(z):=d_jr_j^z\Gamma(z+1)$. Using $\Gamma(z+1)=\int_0^\infty e^{-t}t^z\,dt$ and changing variables $e^u=r_jt$, we get $$f_j(z)=
d_j\int_{-\infty}^{\infty}\frac1{r_j}e^u e^{-e^u/r_j}e^{zu}\,du.$$
Thus, $$\mcB[f](u)=\sum_{j=1}^J\frac{d_j}{r_j}e^u e^{-e^u/r_j}.$$  We also verify the selector-existence condition. For fixed $N$, the integrand $\frac{\mcB[f](u)e^{Nu}}{1-e^u}$ is smooth away from $u=0$. Near $u=0$, the numerator $\mcB[f](u)e^{Nu}$ is smooth, while $1-e^u=-u+O(u^2)$. Thus the only singularity is a simple $1/u$-type singularity, so the symmetric Cauchy principal value at $u=0$ exists. As $u\to+\infty$, each term is bounded, after division by $1-e^u$, by a constant multiple of $e^{Nu}e^{-e^u/r_j}$, which decays super-exponentially. As $u\to-\infty$, each term is $O(e^{(N+1)u})$, since $1-e^u\to1$. Therefore the selector integral is absolutely convergent away from $u=0$, and $\mcS_+[f](N)$ exists for all sufficiently large $N$. Therefore the non-oscillatory selector is defined, and is
$$\mcS_+[f](N) = PV\int_{-\infty}^{\infty} \frac{\mcB[f](u)e^{Nu}}{1-e^u}\,du. $$ Changing variables back by $e^u=r_jt$ in each term gives
$$ \mcS_+[f](N) = PV\int_0^\infty e^{-t}\sum_{j=1}^J d_j\frac{(r_jt)^N}{1-r_jt}\,dt.$$ By exact inversion of $1-S$, we have
$$\mcS_+[f](N)-\mcS_+[f](N+1)=f(N).$$ Thus $I_N:=\mcS_+[f](N)$ is a solution of the recurrence. For this recurrence, $P(N,0)=1$, so the normalized tail is $R_N=I_N$. Let $c(x):=f(x)$. Then $c^{(+)}=f$ and $c^{(-)}=0$, so the canonical selector attached to $c$ is $$U_c(N)=\mcS_+[f](N)=I_N=R_N.$$ Therefore $R_N-U_c(N)=0$ for every $N$, and hence $\FP(R)=0$. It remains only to check shift-nonresonance. Let $r_*$ be the largest of the $r_j$, with coefficient $d_*$. Since the $r_j$ are distinct, $$f(N)=\Gamma(N+1)\left(d_*r_*^N+o(r_*^N)\right),$$ so $$ \frac{|f(N)|}{|f(N-1)|}\sim Nr_*\to\infty. $$ Since $U_c(N)-U_c(N+1)=c(N)=f(N)$, the shift-ratio lemma implies that $U_c$ is
shift-nonresonant. Thus $R$ is $\mcM$-admissible, so the series is
$C$-summable.

Finally, since $\FP(R)=0$, the assigned value is $I_0$. From the formula for $\mcS_+[f](N)$ at $N=0$, we get $$ I_0= PV\int_0^\infty e^{-t}\sum_{j=1}^J\frac{d_j}{1-r_jt}\,dt.$$ This proves the claim.
\end{proof}

This theorem completes the renormalon-type comparison considered here. In the ordinary Borel-compatible case, the $C$-sum agrees with the convergent Borel-Laplace integral. In the present case, where poles lie on the positive summation ray, the same recurrence-selector framework gives the principal-value analogue.

The significance is therefore not that principal-value Borel summation was unavailable. It is that the recurrence construction reaches the same prescription from a different starting point: the normalized tail, its tail difference, and the canonical selector. This places ordinary Borel agreement, principal-value renormalon models, and the ultra-rapid examples above under the same finite-part framework.

\section{Conclusion}

The significance of the present work lies in showing that recurrence structure can serve as a basis for summation. The central point is that the ambiguity in the choice of recurrence solution can be transferred to the normalized tail and then removed by finite-part extraction. From this viewpoint, what matters is not only the growth rate of the coefficients, but the asymptotic form of the tail after normalization. This leads to a discrete framework that can accommodate several different kinds of divergence within a single definition, including classical examples, Gevrey-type series, ultra-rapid growth beyond the usual Gevrey hierarchy, and a renormalon-type model in which principal value regularization arises naturally from the construction. In this sense, the paper suggests that recurrence-based methods may complement transform-based summation theories by providing a different mechanism for assigning values to divergent expansions.

Several questions remain open. It would be valuable to obtain more systematic conditions guaranteeing admissibility and to better understand the shift behavior that governs stability under finite truncation. It would also be natural to strengthen the comparison with classical summation theories, both by enlarging the class in which agreement with Borel summation can be proved and by clarifying the relation with non-Borel-summable settings. Another natural direction is to seek analogous constructions for higher-order recurrences and more general discrete systems. The renormalon example also suggests possible applications to perturbative quantum field theory, where one seeks canonical prescriptions for asymptotic expansions with singularities on the positive Borel axis. Together, these questions suggest several directions in which the recurrence viewpoint developed here could be extended.

\section{Acknowledgments}
The author thanks Professor Ovidiu Costin for his guidance on summation and Professor Brian Conrad for his advice on paper presentation.

\bibliographystyle{alpha}
\bibliography{2025contfracmethod}
\cite{euler2012introduction}
\cite{compton1973integral}
\cite{whittaker2020course}
\cite{pilehrood2013continuedfractionexpansioneulers}
\cite{cuyt2008handbook}
\cite{c39be6f6-2bdf-30c1-bebf-7068a12748e0}
\cite{cvijovic1997continued}
\cite{costin2008asymptotics}
\cite{ecalle1981fonctions}
\cite{beneke1999renormalons}
\cite{hardy2024divergent}
\cite{baker1961pade}
\cite{wall2018analytic}
\cite{balser2006divergent}
\cite{t1979can}
\end{document}